\tikzset{curve/.style={settings={#1},to path={(\tikztostart)
    .. controls ($(\tikztostart)!\pv{pos}!(\tikztotarget)!\pv{height}!270:(\tikztotarget)$)
    and ($(\tikztostart)!1-\pv{pos}!(\tikztotarget)!\pv{height}!270:(\tikztotarget)$)
    .. (\tikztotarget)\tikztonodes}},
    settings/.code={\tikzset{quiver/.cd,#1}
        \def\pv##1{\pgfkeysvalueof{/tikz/quiver/##1}}},
    quiver/.cd,pos/.initial=0.35,height/.initial=0}
\tikzset{tail reversed/.code={\pgfsetarrowsstart{tikzcd to}}}
\tikzset{2tail/.code={\pgfsetarrowsstart{Implies[reversed]}}}
\tikzset{2tail reversed/.code={\pgfsetarrowsstart{Implies}}}
\tikzset{no body/.style={/tikz/dash pattern=on 0 off 1mm}}
\newtheorem{theorem}{Theorem}[section]%
\newtheorem{lemma}[theorem]{Lemma}%
\newtheorem{corollary}[theorem]{Corollary}%
\theoremstyle{definition}%
\newtheorem{definition}[theorem]{Definition}%
\newtheorem{proposition}[theorem]{Proposition}%
\newtheorem{example}[theorem]{Example}%
\newtheorem{remark}[theorem]{Remark}%
\newtheorem*{theorem*}{Theorem 3.8}
\newcommand{\id}{\mathrm{id}}
\newcommand{\op}{\mathrm{op}}
\newcommand{\dom}{\mathrm{dom}}
\newcommand{\cod}{\mathrm{cod}}
\newcommand{\ps}{\mathrm{ps}}
\newcommand{\ExistStr}{\mathrm{ExistStr}}
\newcommand{\TwoCat}{\mathrm{2CAT}}
\newcommand{\Dbl}{\mathrm{Dbl}}
\newcommand{\Ar}{\mathrm{Arrow}}
\newcommand{\lax}{\mathrm{lax}}
\newcommand{\SM}{\mathrm{SM}}
\newcommand{\colax}{\mathrm{colax}}
\newcommand{\Span}[1]{\mathbb{S}{pan}(#1)}
\newcommand{\Qt}[1]{\mathbb{Q}{t}(#1)}
    \title{Double-functorial representation of regular hyperdoctrines}
  \author[J. Siqueira]{Jos\'e Siqueira}
\address{Topos Institute\\ Oxford, England}
\email{jose@topos.institute}
\subjclass[2020]{18N10, 03G30}
\keywords{hyperdoctrine, double category}
    \date{\today
}
\begin{document}

\maketitle

    \tableofcontents
\begin{abstract}
    It is well-known that pseudofunctors from bicategories of spans are equivalent to Beck-Chevalley bifibrations, and therefore capture the relationships underlying the adjunctions 
suitable as semantics for existential quantification. This was further expanded upon by Dawson, Par\'e and Pronk in the context of double categories. 
By viewing hyperdoctrines from a double-categorical lens, this paper shows that we can also characterise the Frob\"enius property: (generalised) regular
hyperdoctrines correspond to certain lax symmetric monoidal pseudo double functors from spans to quintets whose monoidal laxators provide companion commuter cells (in the sense
of Paré). This facilitates the study of the compositionality of regular hyperdoctrines and hints at a new notion of \emph{regular double hyperdoctrine}. As an application, we discuss how we can recover
a form of graphical regular logic suitable for modelling specifications of systems (e.g., port-plugging systems) that compose operadically. 
\end{abstract}
    \label{section:abstract}

\subsection*{Acknowledgments} Funded by the Advanced Research + Invention Agency (ARIA) through
project code MSAI-PR01-P14. The author thanks Alex Mallard for the assistance with type-setting string diagrams, as well as David Jaz Myers, Jason Brown, Nathanael Arkor, and the anonymous referees for the valuable discussions.

\section{Introduction}
\label{section:introduction}
\par{}One of the most fruitful modern insights into logic was Lawvere's observation that quantifiers ought to be thought of as adjoint functors
 \cite{lawvere-1969-adjointness}, which is at the heart 
of the notion of \emph{hyperdoctrine}: an indexed poset (whose posets we think of as forming \emph{predicates} of a certain type)
 that satisfies properties enabling the adequate 
interpretation of a target fragment of logic (chiefly the Beck-Chevalley condition). Fundamentally, this means that the data of a logical doctrine 
manifests itself in two directions: that of \emph{context operations}, and that of the \emph{action on predicates}.\par{}Double Category Theory provides the tools to capture these two actions and 
 their relationship concomitantly: in a double category of spans, re-indexing 
 and acting on predicates can be packaged as tight and loose arrows respectively, 
 and in there tight arrows always have suitably constrained conjoints.  By mapping these conjoints into 
 a double category of quintets, we obtain adjunctions internal to a 2-category 
 (classically, of posets) which satisfy the Beck-Chevalley condition, and thus serve
  as semantics for substitution and existential quantification.
 \par{}What was described thus far is well-known in the literature. The way spans come into the picture is very natural if one is interested in logical semantics:  start with a category and add adjoints to every morphism, 
 but make them constrained by the Beck-Chevalley condition so that you can define quantification. If the category
 had pullbacks to begin with, you end up with a bicategory of spans (c.f.  \cite{dawson-2003-adjoining}).  The first record of the universal property of spans and the relationship to 
  the Beck-Chevalley condition was given by Hermida \cite{hermida-2000-representable}  and referred to as folklore.  Dawson, Paré  and Pronk further explored spans as a bicategory and the adjointness structure
   \cite{dawson-2004-universal}: sinister morphisms of bicategories \(\mathcal {A} \to  \mathcal {B}\)
  (i.e., whose morphism images are left adjoints) satisfying the Beck-Chevalley condition are equivalent to strong morphisms \(\mathcal {S}pan(\mathcal {A})\to  \mathcal {B}\). 
  The same authors extended this to a double categorical picture in \cite{dawson-2010-span}, where the notions of \emph{gregarious double category} (also called \emph{fibrant double categories} 
  or \emph{framed bicategories} in \cite{shulman-2008-framed-bicats} ) and Beck-Chevalley double category were introduced.  Notably, the double-categorical span construction
  provides a canonical way to obtain a Beck-Chevalley double category out of a category with pullbacks, and has better universal properties than its bicategorical counterpart. 
  \par{}The main contribution of this paper is to extend these ideas to the study of \emph{regular logic} and analogous systems: by thinking in terms of double
functors and factoring in a monoidal structure, one can also capture the Frob\"enius condition for regular hyperdoctrines. In its simplest form, our main result establishes 
a correspondence between regular hyperdoctrines \(P\) over a cartesian category \(\mathsf {C}\) of contexts and certain symmetric 
monoidal double functors \(Q \colon  \Span{\mathsf{C}}^{\op} \to \Qt{\mathsf{Pos}}\) --- those for which
 the cell components of the monoidal laxator are \emph{companion commuters} in the sense of Paré (see \cite{pare-2024-retrocells} ).
\par{}There is no reason to restrict ourselves to \(\land \)-semilattices and classical regular logic, though. More generally,
 we will allow the predicates of \(P\) 
 to form symmetric pseudomonoids in a symmetric monoidal 2-category \(\mathcal {K}\), while still being subject to restricted Beck-Chevalley and Frob\"enius conditions. 
 We refer to such analogues of regular hyperdoctrines as \emph{regular $\mathcal{K}$-hyperdoctrines}.$^{\footnotesize 1}$ \footnote{$\footnotesize 1$ The idea for this concept is implicit in works like \cite{Shulman2011}, but not formalised.}
 Moreover, the indexing category \(\mathsf {C}\) might only have pullbacks for certain kinds of morphisms,
 which can be formalised by the notion of \textit{adequate triple}  (adapted from \cite{barwick-spectral-2017, haugseng-2020-twovariable});  the Beck-Chevalley and Frob\"enius conditions are thus only assumed to hold for certain morphisms. Making these adjustments, we can state the main theorem of this paper (Theorem~\ref{MainTheorem}) as follows: \\

\textbf{Theorem}: \emph{Let $\mathfrak{C}$ be a cartesian adequate triple and $\mathcal{K}$ be a symmetric monoidal 2-category. Every $\mathfrak{C}$-regular 
 $\mathcal{K}$-hyperdoctrine $P$ can be extended to a lax symmetric monoidal pseudo double functor
  $P^{\bullet} \colon \Span{\mathfrak{C}}^{\op} \to \mathcal{K}$ whose monoidal laxators are companion commuter transformations. Conversely,
  if the class $\mathfrak{C}^l$ includes all diagonals, then the tight component of every such double functor which is strict is a $\mathfrak{C}$-regular $\mathcal{K}$-hyperdoctrine. }
\\

Viewing regular hyperdoctrines as double functors facilitates the study of their compositionality, with finer control over the contexts and the allowed semantics. Moreover, the result suggests that a broader notion of \emph{regular double hyperdoctrine} could be defined, where even greater fine-tuning of the semantics can be achieved by changing the double category where the semantics takes place, and replacing `quantification and substitution form an adjunction' with `quantification and substitution form a conjunction'. Conceptually, this means existential quantification and substitution need not be in the same 2-category (e.g., they can form a doctrinal adjunction). We will not pursue this perspective in this paper, but it will be the subject of future work. Instead, we illustrate how these ideas on compositionality can be used in the double categorical treatment of systems theories of \cite{libkind-2025-towards} in Section~\ref{section:discussion}.
  
Section~\ref{section:preliminaries} introduces the preliminary notions required on both the logical and double
categorical sides, as well as some representative examples. We recall the definition of \emph{adequate triple} needed for a general span construction, then
introduce a broad notion of \emph{regular hyperdoctrine} and discuss how it relates to the classical one. The proof of the main result is divided in two sections: in Section~\ref{section:DoctrinesAsDoublePseudofunctors}
we show how a (generalised) regular hyperdoctrine can be extended to a suitable type of pseudo double functor, and in Section~\ref{section:DoublePseudofunctorsAsDoctrines} we argue that one extra mild
condition on the adequate triples (which is trivially satisfied for the conventional spans) yields a converse. We finalise in Section~\ref{section:discussion} with a discussion of the significance of the result, the application to systems theories, and further directions.

\subsection{Notation}

Throughout this paper, categories will be drawn in \emph{sans serif font}  $\mathsf{C}$, 2-categories will be denoted in \emph{calligraphic style} $\mathcal{K}$, and double categories will be denoted in \emph{blackboard bold script}, e.g., $\mathbb{D}$, $\mathbb{Q}{t}(\mathcal{K})$ or $\mathbb{S}{pan}(\mathsf{C})$. We also write $\mathsf{SM}^{\text{lax}}(\mathcal{K})$ (respectively, $\mathsf{SM}(\mathcal{K})$) for the 2-category of symmetric pseudomonoids, lax (respectively, strong) morphisms of pseudomonoids, and monoidal 2-cells between them in $\mathcal{K}$. We take $\TwoCat$ to be the 2-category of locally small 2-categories, 2-functors, and 2-natural transformations, and $\Dbl$ to be the 2-category of double categories, pseudo double functors, and tight transformations. If $\mathcal{W}$ is a 2-category, we write $\Ar^{\colax}(\mathcal{W})$ for the 2-category of arrows in $\mathcal{W}$ and colax morphisms between them (i.e., the 2-cells in $\mathcal{W}$ that occur as part of $1$-morphisms of arrows go \emph{down} the page). \\

The compositor and unitor for a pseudofunctor $P$ will be denoted by $\gamma^P$ and $\iota^P$, respectively. A projection map $A \times B \to A$ will be denoted $\pi^{A \times B}_A$ (and similarly for the second projection). We will reserve $\Delta$ for the diagonal transformation. The naturality cell of a pseudonatural transformation (in particular, the naturality square of a natural transformation) $\alpha$ with respect to a morphism $f$ will be denoted by $\alpha_f$. \\ 


We will employ 2-categorical string diagrams, which depict  objects as regions (left implicit), morphisms as wires (with the left-most being applied first), and 2-cells as squares (with the bottom ones being applied first). Wires bent as cups and caps denote units and counits of adjunctions, respectively, or their images under given pseudofunctors; it will be clear from context. As the adjunction in question can be easily deduced from the labels on the wires, we will label those and leave the adjunctions themselves implicit (e.g., the unit of $\exists^P f \vdash Pf$ is a cup with an $\exists^P f$-labelled wire on the left and a $Pf$-labelled wire on the right). In diagrams, a 2-cell witnessing an isomorphism $f \cong g$ will be denoted $\cong_{f}^{g}$, and similarly for equalities.

\section{Preliminaries}
\label{section:preliminaries}
This paper reinforces the idea that double categories provide a better environment to speak of logic, where we can neatly organise the action
on contexts and the action on predicates on separate dimensions, which are nonetheless linked by the double cells and the axioms of a double category. This section is dedicated to introducing the objects of interest: generalised hyperdoctrines and the double categories they embed in. 

We start by recalling the notion of \emph{adequate triple}, which allows us to consider hyperdoctrines for which meaningful quantification is only allowed for a select class of morphisms. We then introduce \emph{existential symmetric monoidal structures} and (generalised) \emph{regular hyperdoctrines}, which allow for different monoidal structures on predicates instead of just the usual cartesian $\land$. This differs slightly from the usual notion of hyperdoctrine (even when ignoring adequate triples and using the cartesian monoidal structure on the the predicates), but we explain how the notions are equivalent in that case. Finally, we recall the essential notions of double category theory used in the paper, particularly the concepts of \emph{companion} and \emph{conjoint} of a tight morphism, and the double categories of spans and quintets.\\

In its simplest incarnation, the classical definition of regular hyperdoctrine is as a contravariant functor $P \colon \mathsf{C}^{\op} \to \land\mathrm{SLat}$ from some category $\mathsf{C}$ with finite limits to the category of $\land$-semilattices, with the property that $Pf$ has a left adjoint $\exists f$ for each morphism $f \in \mathsf{C}$, which are subject to the Beck-Chevalley and Frob\"enius laws.$^{\footnotesize 2}$ \footnote{$^{\footnotesize 2}$ As far as I can tell, the earliest form of this idea appears in \cite{lawvere2}.} We see $\mathsf{C}$ as a category of \emph{contexts}, each fibre $PA$ (for lack of a better word) as the $\land$-semilattice of \emph{predicates over $A$}, and the maps $Pf$ and $\exists f$ as \emph{term substitution} and \emph{existential quantification}, respectively. The Beck-Chevalley condition then asserts that given a pullback of contexts, substitution and existential quantification commute in a suitable way --- this replaces having to deal with free variables. Finally, the Frob\"enius law establishes that existential quantification is suitably compatible with conjunctions.

This notion, as it is, can be too rigid. The first issue is that there are situations (appearing
chiefly in geometry, but also in the study of \emph{lenses}) where $\mathsf{C}$ only admits certain pullbacks, and only some $Pf$ have left adjoints. From the point of view of logic, this would mean that only quantification along certain morphisms exists and behaves well. In the setting of nonstandard analysis, for example, one might envision having substitution of arbitrary terms, but only allowing for quantification along standard maps.

Now, it is well-known that the Beck-Chevalley condition is faithfully captured by the underyling algebra of \emph{spans} \cite{Evangelia, dawson-2010-span, hermida-2000-representable, Hoshino2025}, which compose by taking pullbacks. The relaxation of which kind of pullbacks is allowed was done  by C. Barwick in the \(\infty \)-categorical
literature by introducing the notion of \emph{adequate triple}, which singles out what is needed to compose spans \cite{barwick-spectral-2017}. Here we employ the following 1-categorical incarnation of the concept:
\begin{definition}[adapted from \cite{haugseng-2020-twovariable}]
\par{}An \emph{adequate triple} $\mathfrak{C} = (\mathsf {C}, \mathfrak{C}^l, \mathfrak{C}^r)$ consists of a category \(\mathsf {C}\), together with classes \(\mathfrak{C}^l\) and \(\mathfrak{C}^r\) of morphisms
of \(\mathsf {C}\) such that:\begin{itemize}\item{}\(\mathfrak{C}^l\) and \(\mathfrak{C}^r\) contain all identities and are closed under composition;

\item{}Every cospan \(X \xrightarrow {x} Z \xleftarrow {y} Y\) with \(x \in  \mathfrak{C}^l\) and \(y \in  \mathfrak{C}^r\) admits a pullback;

\item{}The pullback of a morphism in \(\mathfrak{C}^l\) along a morphism in \(\mathfrak{C}^r\) is a morphism in \(\mathfrak{C}^l\) (and vice-versa). We refer to such a pullback as
a $\mathfrak{C}$-\emph{pullback}.\end{itemize}

We refer to $\mathfrak{C}^l$ as the \emph{left class} and to $\mathfrak{C}^r$ as the \emph{right class} of the adequate triple $\mathfrak{C}$.
\end{definition}

Adequate triples can be organised into a 2-category $\mathcal{A}{dq}$ by taking as morphisms the 2-functors that preserve $\mathfrak{C}^l$, $\mathfrak{C}^r$ and $\mathfrak{C}$-pullbacks, and the 2-cells as 2-natural transformations. It is easy to see that it is cartesian, with products computed component-wise. Moreover, the construction of spans provides a cartesian 2-functor $\Span{-} \colon \mathcal{A}{dq} \to \Dbl$, where $\Dbl$ is the 2-category of (weak) double categories, pseudo double functors, and tight transformations \cite{libkind-2025-towards}. We will discuss double categories of spans in more detail towards the end of the section.\\ 

\par{} In logic, we like to be able to put contexts in parallel and form lists. This means we require a monoidal structure in $\mathsf{C}$ that is compatible with the adequate triple structure (usually cartesian). For this reason, we work with \emph{symmetric monoidal adequate triples}, which are the symmetric pseudomonoids in $\mathcal{A}{dq}$. Thus, the underlying category $\mathsf{C}$ of one such triple is a symmetric monoidal category, the classes $\mathfrak{C}^l$ and $\mathfrak{C}^r$ are closed under the tensor product of $\mathsf{C}$, and the tensor product preserves $\mathfrak{C}$-pullbacks. An adequate triple is \emph{cartesian} if it has a cartesian monoidal structure.

\begin{example}
    For any category $\mathsf{C}$, there is a \emph{trivial adequate triple} with underlying category $\mathsf{C}$, where $\mathfrak{C}^l=\mathfrak{C}^r=\mathrm{mor}(\mathsf{C})$. The associated double category of spans is the usual double category $\Span{\mathsf{C}}$ whose loose morphisms are spans in $\mathsf{C}$ and whose squares are morphisms of spans.
\end{example}

\begin{example}
    The category $\mathrm{Met}$ of metric spaces and continuous maps admits a cartesian adequate triple structure, where both the left and right class are the Lipschitz-continuous maps.
\end{example}

\begin{example}[{}]
\par{}For any Grothendieck fibration \(\pi  \colon  \mathsf{E} \to  \mathsf{B}\), there is an associated adequate triple $\mathfrak{E}$ with underlying category $\mathsf{E}$, where \(\mathfrak{E}^l\) is the class of
vertical maps and \(\mathfrak{E}^r\) is the class of cartesian maps. The double category  \(\mathbb {S}\mathsf {pan}(\mathfrak{E})\) is then the double category
\(\mathbb {L}\mathsf {ens}(E)\), whose loose maps are \emph{Spivak lenses} \cite{spivak-2019-generalized}  (see \cite{libkind-2025-towards} for details).\end{example}


Given an adequate triple \(\mathfrak{C}\), a natural extension of the classical concept of existential hyperdoctrine is to restrict the Beck-Chevalley condition to \(\mathfrak{C}\)-pullbacks only, and to allow for predicates to form more general structures than orderings (e.g., be objects in some 2-category). This leads to the following notion:

\begin{definition}[$\mathfrak{C}$-existential $\mathcal{K}$-structure]
\label{def:ExistentialStructure}
    Let $\mathfrak{C}$ be an adequate triple and $\mathcal{K}$ be a locally small 2-category. A $\mathfrak{C}$-existential $\mathcal{K}$-structure is a 2-functor $P \colon \mathsf{C}^{\op} \to \mathcal{K}$ such that:
    \begin{itemize}
      \item For each morphism \(f \colon  A \to  B\) in \(\mathfrak{C}^r\), the morphism \(Pf \colon  PB \to  PA\) has a  left adjoint \(\exists^P f \colon  PA \to  PB\) (internal to $\mathcal{K}$). We denote the unit
of this adjunction by \(\eta^{Pf}\) and the counit by \(\epsilon^{Pf}\);

\item These adjoints satisfy the Beck-Chevalley condition with respect to the adequate triple \(\mathfrak{C}\): for any \(\mathfrak{C}\)-pullback \begin{tikzcd}
	A & I \\
	B & J
	\arrow[""{name=0, anchor=center, inner sep=0}, "h", from=1-1, to=1-2]
	\arrow["k"', from=1-1, to=2-1]
	\arrow["\lrcorner"{anchor=center, pos=0.125}, draw=none, from=1-1, to=2-2]
	\arrow["g", from=1-2, to=2-2]
	\arrow[""{name=1, anchor=center, inner sep=0}, "f"', from=2-1, to=2-2]
	\arrow["\alpha"{description}, color={rgb,255:red,92;green,92;blue,214}, draw=none, from=0, to=1]
\end{tikzcd} where $g \in \mathfrak{C}^l$ and $f \in \mathfrak{C}^r$, the 2-cell 
\begin{center}\begin{tikzcd}
	PB && PJ && PI && PI \\
	\\
	PB && PB && PA && PI
	\arrow[""{name=0, anchor=center, inner sep=0}, "{\exists^Pf}", from=1-1, to=1-3]
	\arrow[equals, from=1-1, to=3-1]
	\arrow[""{name=1, anchor=center, inner sep=0}, "Pg", from=1-3, to=1-5]
	\arrow["Pf"', from=1-3, to=3-3]
	\arrow[""{name=2, anchor=center, inner sep=0}, equals, from=1-5, to=1-7]
	\arrow["Ph", from=1-5, to=3-5]
	\arrow[equals, from=1-7, to=3-7]
	\arrow[""{name=3, anchor=center, inner sep=0}, equals, from=3-1, to=3-3]
	\arrow[""{name=4, anchor=center, inner sep=0}, "Pk"', from=3-3, to=3-5]
	\arrow[""{name=5, anchor=center, inner sep=0}, "{\exists^Ph}"', from=3-5, to=3-7]
	\arrow["{\eta^{Pf}}"', color={rgb,255:red,153;green,92;blue,214}, between={0.2}{0.8}, Rightarrow, from=3, to=0]
	\arrow["P(\alpha)"', color={rgb,255:red,92;green,92;blue,214}, between={0.2}{0.8}, Rightarrow, from=4, to=1]
	\arrow["{\epsilon^{Ph}}"', color={rgb,255:red,153;green,92;blue,214}, between={0.2}{0.8}, Rightarrow, from=5, to=2]
\end{tikzcd}\end{center}
 has an inverse \(\mathcal {B}_{\alpha}\) in $\mathcal{K}$.
    \end{itemize}
    \end{definition}

The 2-category of existential structures and weak morphisms of existential structures $\ExistStr_{\mathrm{w}}$ is obtained by first forming a pullback \begin{equation*}
\begin{tikzcd}
	{\ExistStr'} && {\Ar^{\colax}(\TwoCat)} \\
	\\
	{\mathcal{A}{dq}} && {\TwoCat}
	\arrow[""{name=0, anchor=center, inner sep=0}, from=1-1, to=1-3]
	\arrow[from=1-1, to=3-1]
	\arrow["\lrcorner"{anchor=center, pos=0.125}, draw=none, from=1-1, to=3-3]
	\arrow["\dom", from=1-3, to=3-3]
	\arrow[""{name=1, anchor=center, inner sep=0}, from=3-1, to=3-3]
	\arrow[""{description}, shift left=4, color={rgb,255:red,92;green,92;blue,214}, draw=none, from=0, to=1]
\end{tikzcd},
\end{equation*}
where $\mathcal{A}{dq} \to \TwoCat$ is pointwise the opposite of the obvious forgetful map, then taking its full sub-2-category on those objects that are existential structures. More explicitly,:

\begin{itemize}
    \item objects are pairs $(\mathfrak{C}, P)$, where $\mathfrak{C}$ is an adequate triple and $P$ is a $\mathfrak{C}$-existential structure;

    \item morphisms $(\mathfrak{C}_1, P_1) \to (\mathfrak{C}_2, P_2)$ are triples $(F, \rho, L)$ providing a diagram 
    \begin{equation*}
\begin{tikzcd}
	{\mathsf{C}^{\op}_1} &&& {\mathcal{K}_1} \\
	\\
	{\mathsf{C}^{\op}_2} &&& {\mathcal{K}_2}
	\arrow[""{name=0, anchor=center, inner sep=0}, "{P_1}", from=1-1, to=1-4]
	\arrow["{F^{\op}}"', color={rgb,255:red,92;green,92;blue,214}, from=1-1, to=3-1]
	\arrow["L", color={rgb,255:red,92;green,92;blue,214}, from=1-4, to=3-4]
	\arrow[""{name=1, anchor=center, inner sep=0}, "{P_2}"', from=3-1, to=3-4]
	\arrow["\rho", color={rgb,255:red,214;green,92;blue,92}, between={0.2}{0.8}, Rightarrow, from=0, to=1]
\end{tikzcd}
    \end{equation*}
in $\TwoCat$ for which $F$ is a 1-morphism of adequate triples;


\item $2$-morphisms $\alpha \colon (F_1, \rho_1, L_1) \to (F_2, \rho_2, L_2)$ are pairs $(F_2 \xrightarrow{\alpha} F_1, L_1 \xrightarrow{\beta} L_2)$ of 2-natural transformations such that the cube
\begin{equation*}
\begin{tikzcd}
	& {\mathsf{C}^{\op}_1} &&& {\mathcal{K}_1} \\
	{\mathsf{C}^{\op}_2} &&& {\mathcal{K}_2} \\
	\\
	& {\mathsf{C}^{\op}_1} &&& {\mathcal{K}_1} \\
	{\mathsf{C}^{\op}_2} &&& {\mathcal{K}_2}
	\arrow[""{name=0, anchor=center, inner sep=0}, "{P_1}", from=1-2, to=1-5]
	\arrow[""{name=1, anchor=center, inner sep=0}, "{F_1^{\op}}"', color={rgb,255:red,92;green,92;blue,214}, from=1-2, to=2-1]
	\arrow[equals, from=1-2, to=4-2]
	\arrow[""{name=2, anchor=center, inner sep=0}, "L_1", color={rgb,255:red,92;green,92;blue,214}, from=1-5, to=2-4]
	\arrow[equals, from=1-5, to=4-5]
	\arrow[""{name=3, anchor=center, inner sep=0}, "{P_2}"', from=2-1, to=2-4]
	\arrow[equals, from=2-1, to=5-1]
	\arrow[equals, from=2-4, to=5-4]
	\arrow[""{name=4, anchor=center, inner sep=0}, "{P_1}", from=4-2, to=4-5]
	\arrow[""{name=5, anchor=center, inner sep=0}, "{F_2^{\op}}"', color={rgb,255:red,92;green,92;blue,214}, from=4-2, to=5-1]
	\arrow[""{name=6, anchor=center, inner sep=0}, "{L_2}", color={rgb,255:red,92;green,92;blue,214}, from=4-5, to=5-4]
	\arrow[""{name=7, anchor=center, inner sep=0}, "{P_2}"', from=5-1, to=5-4]
	\arrow["\rho_1", color={rgb,255:red,214;green,92;blue,92}, between={0.2}{0.8}, Rightarrow, from=2, to=1]
	\arrow["{{\alpha}^{\op}}"', color={rgb,255:red,153;green,92;blue,214}, between={0.2}{0.8}, Rightarrow, from=1, to=5]
	\arrow["{\beta}", color={rgb,255:red,153;green,92;blue,214}, between={0.2}{0.8}, Rightarrow, from=2, to=6]
	\arrow["{\rho_2}", color={rgb,255:red,214;green,92;blue,92}, between={0.2}{0.8}, Rightarrow, from=6, to=5]
\end{tikzcd}
\end{equation*}
commutes.     
\end{itemize}

As adequate triples and $\mathfrak{C}$-existential structures are closed under finite products, this is a cartesian 2-category.\\

The reason morphisms in $\ExistStr_{\mathrm{w}}$ are called \emph{weak} is because the 2-cells $\rho$ of $\mathcal{K}$ that appear in 1-morphisms of $\ExistStr_{\mathrm{w}}$ only preserve existential quantification weakly, via their mates --- in the case of classical poset-based hyperdoctrines, they only give an inequality comparing existential quantifications. To say that $\rho$ preserves existential quantification (recovering the notion of morphism of existential doctrines in \cite{trotta}) is to say that $\mathrm{mate}(\rho_f)$ is invertible in $\mathcal{K}$ for each $f \in \mathfrak{C}^r$. We write $\ExistStr$ for the sub-2-category of $\ExistStr_{\mathrm{w}}$ that has the same objects and 2-morphisms, but whose 1-morphisms are strong in this sense.\\

For classical hyperdoctrines, there is still the stipulation that each fibre has a monoidal structure, typically the cartesian one yielding conjunctions, but not always (e.g., in linear logic). We can account for this by considering symmetric pseudomonoids in $\ExistStr_{\mathrm{w}}$.

\begin{definition}
\label{def:existential pseudomonoid}
    An existential (symmetric) pseudomonoid is a (symmetric) pseudomonoid in the cartesian 2-category $\ExistStr_{\mathrm{w}}$. If its underlying 2-functor has (the underlying category of) $\mathfrak{C}$ as its domain and $\mathcal{K}$ as its codomain, we say it is a $\mathfrak{C}$-existential (symmetric) monoidal $\mathcal{K}$-structure.
\end{definition}

It is easy to see that specifying an existential (symmetric) pseudomonoid amounts to selecting a (symmetric) monoidal adequate triple $\mathfrak{C}$, a locally small (symmetric) monoidal $2$-category $\mathcal{K}$, and a $\mathfrak{C}$-existential $\mathcal{K}$-structure $P$ whose underlying 2-functor is lax (symmetric) monoidal. For example, the pseudomonoid multiplication $(\mathfrak{C}, P) \times (\mathfrak{C}, P) \cong (\mathfrak{C} \times \mathfrak{C}, P \times P) \to (\mathfrak{C}, P)$ provides symmetric monoidal structures for $\mathsf{C}$ and $\mathcal{K}$, as well as a lax monoidal structure for the 2-functor $P \colon \mathsf{C}^{\op} \to \mathcal{K}$ with respect to those. \\

We will later see that if (the underlying object) of an existential symmetric pseudomonoid is a regular hyperdoctrine as below, then its laxator is a strong morphism of existential doctrines, so we actually have a symmetric pseudomonoid in $\ExistStr$ (c.f. Lemma~\ref{lemma: regular hyperdoctrine extends to double functor}).\\

Now, Definition~\ref{def:existential pseudomonoid} does not quite match what the reader is likely used to calling an \emph{existential hyperdoctrine}, even in the case where $\mathfrak{C}$ is a trivial cartesian adequate triple and $\mathcal{K}$ is the 2-category of posets. This is since a $\mathfrak{C}$-regular $\mathcal{K}$-hyperdoctrine $P$ is a lax symmetric monoidal 2-functor $P \colon \mathsf{C}^{\op} \to \mathcal{K}$, whereas in a classical hyperdoctrine we expect the \emph{fibres} to have a monoidal structure (e.g., as a lattice), not $P$ itself. Moreover, in many natural examples $P$ is only pseudofunctorial. However, both issues are circumventable. 

Regarding connectives, there is a correspondence between ``external'' monoidal structures $(P, \mu^P, I^P)$ on $P$ and ``internal" monoidal structures $(PA, \otimes_{PA}, I^{PA})$ on the fibres $PA$ for each object $A \in \mathsf{C}$ whenever $\mathsf{C}$ is cartesian and $\mathcal{K}$ is symmetric monoidal --- in the case $\mathcal{K}=\mathcal{C}\mathsf{at}$, this is (modulo the symmetry, which is discussed only in the context of the correspondence between indexed categories and fibrations) as in Theorem 4.2 in \cite{moeller-vasilakopoulou2020}. The same phenomenon was discussed in \cite{shulman-2008-framed-bicats} in the context of fibrations (c.f. Theorem 12.7) so we only sketch the argument here. The relevant definitions can be found in \cite{moeller-vasilakopoulou2020}.

\begin{proposition}
\label{Proposition:Internal vs External monoidal structures}
    Let $\mathsf{C}$ be a cartesian category and $\mathcal{K}$ be a symmetric monoidal 2-category. Then there is an isomorphism between $\mathrm{SM}^{\lax}2\mathrm{Cat}_{\ps}(\mathsf{C}^{\op}, \mathcal{K})$ and $2\mathrm{Cat}_{\ps}(\mathsf{C}^{\op}, \mathrm{SM}(\mathcal{K}))$, where $2\mathrm{Cat}_{\ps}(\mathcal{A}, \mathcal{B})$ is the 2-category of pseudofunctors, pseudonatural transformations, and modifications between $\mathcal{A}$ and $\mathcal{B}$.
\end{proposition}
\begin{proof}[Proof: (Sketch)]
        If $(P, \mu^P, I^P) \colon \mathsf{C}^{\op} \to \mathcal{K}$ is lax symmetric monoidal, then it induces a 2-functor \begin{equation*}
            \SM^{\lax} \Ar^{\colax}(\mathsf{C}^{\op}) \xrightarrow{\SM^{\lax} \Ar^{\colax} (P)} \SM^{\lax}\Ar^{\colax}(\mathcal{K}).
        \end{equation*}
        Since $\mathcal{C}$ is cartesian, we have canonical monoid structures \begin{equation*}
\begin{tikzcd}
	{B\times B} && {A\times A} && 1 && 1 \\
	\\
	B && A && B && A
	\arrow[""{name=0, anchor=center, inner sep=0}, "{f^{\op} \times f^{\op}}", from=1-1, to=1-3]
	\arrow["{\Delta_B^{\op}}"', from=1-1, to=3-1]
	\arrow["{\Delta_A^{\op}}", from=1-3, to=3-3]
	\arrow[""{name=1, anchor=center, inner sep=0}, equals, from=1-5, to=1-7]
	\arrow["{!_B^{\op}}"', from=1-5, to=3-5]
	\arrow["{!_A^{\op}}", from=1-7, to=3-7]
	\arrow[""{name=2, anchor=center, inner sep=0}, "{f^{\op}}"', from=3-1, to=3-3]
	\arrow[""{name=3, anchor=center, inner sep=0}, "{f^{\op}}"', from=3-5, to=3-7]
	\arrow["\Delta_f^{\op}"{description}, color={rgb,255:red,92;green,92;blue,214}, draw=none, from=0, to=2]
	\arrow["\circlearrowleft"{description}, color={rgb,255:red,92;green,92;blue,214}, draw=none, from=1, to=3]
\end{tikzcd}
        \end{equation*} on each object $f^{\op} \colon B \to A$ in $\Ar^{\colax}(\mathsf{C}^{\op})$, which thus get mapped to symmetric pseudomonoids in $ \Ar^{\colax}(\mathcal{K})$. By a simple (but tedious) manual check, or by appealing to the symmetry of internalisation (Theorem 7.5 in \cite{arkor2024enhanced2categoricalstructurestwodimensional}), this makes each fibre $PA$ a symmetric pseudomonoid in $\mathcal{K}$ with multiplication $PA \otimes_{\mathcal{K}} PA \xrightarrow{\mu^P_{A,A}} P(A \times A) \xrightarrow{P(\Delta_A)} A$, and each $Pf$ lax symmetric monoidal. Concretely, the monoidal laxators for $Pf$ are given by
        \begin{equation*}       
\begin{tikzcd}
	&&&&& {I^{\mathcal{K}}} && {I^{\mathcal{K}}} \\
	& {PB \otimes_{\mathcal{K}} PB} && {PA \otimes_{\mathcal{K}} PA} && P1 && P1 \\
	{\mu^{PF}\coloneqq} & {P(B\times B)} && {P(A\times A)} & {I^{Pf}\coloneqq} & P1 && P1 \\
	& PB && PA && PB && PA
	\arrow[equals, from=1-6, to=1-8]
	\arrow["{{I^P}}"', from=1-6, to=2-6]
	\arrow["{{I^P}}", from=1-8, to=2-8]
	\arrow[""{name=0, anchor=center, inner sep=0}, "{{Pf \otimes_{\mathcal{K}} Pf}}", from=2-2, to=2-4]
	\arrow["{{\mu^P_{B,B}}}"', from=2-2, to=3-2]
	\arrow["{{\mu^P_{A,A}}}", from=2-4, to=3-4]
	\arrow[""{name=1, anchor=center, inner sep=0}, equals, from=2-6, to=2-8]
	\arrow[equals, from=2-6, to=3-6]
	\arrow[equals, from=2-8, to=3-8]
	\arrow[""{name=2, anchor=center, inner sep=0}, "{{P(f \times f)}}"', from=3-2, to=3-4]
	\arrow["{{P(\Delta_B)}}"', from=3-2, to=4-2]
	\arrow["{{P(\Delta_A)}}", from=3-4, to=4-4]
	\arrow[""{name=3, anchor=center, inner sep=0}, "{{P(\id_1)}}", from=3-6, to=3-8]
	\arrow["{{P!_B}}"', from=3-6, to=4-6]
	\arrow["{{P!_A}}", from=3-8, to=4-8]
	\arrow[""{name=4, anchor=center, inner sep=0}, "Pf"', from=4-2, to=4-4]
	\arrow[""{name=5, anchor=center, inner sep=0}, "Pf"', from=4-6, to=4-8]
	\arrow["{{\mu^P_{f,f}}}"{description}, color={rgb,255:red,92;green,92;blue,214}, draw=none, from=0, to=2]
	\arrow["{\iota^P_1}"{description}, color={rgb,255:red,92;green,92;blue,214}, draw=none, from=1, to=3]
	\arrow["{P(\Delta_f)}"{description}, color={rgb,255:red,92;green,92;blue,214}, draw=none, from=2, to=4]
	\arrow["\cong"{description}, color={rgb,255:red,92;green,92;blue,214}, draw=none, from=3, to=5]
\end{tikzcd}.
        \end{equation*}
 where $\Delta$ is the diagonal map and $I^{\mathcal{K}}$ is the monoidal unit of $\mathcal{K}$.
 
  As $\mu^P$ and $\iota^P$ are pseudonatural, this shows that each $Pf$ is in fact \emph{strong} monoidal. With this, we can define a 2-functor $(-)'\colon \SM^{\lax}2\mathrm{Cat}_{\ps}(\mathsf{C}^{\op}, \mathcal{K}) \to2\mathrm{Cat}_{\ps}(\mathsf{C}^{\op}, \SM(\mathcal{K}))$ --- it takes a lax symmetric monoidal pseudofunctor $P \colon \mathsf{C}^{\op} \to \mathcal{K}$ to the pseudofunctor $P' \colon \mathsf{C}^{\op}\to \mathrm{SM}(\mathcal{K})$ mapping $A$ to the symmetric pseudomonoid $PA$, and a morphism $f\colon A \to B$ in $\mathsf{C}$ to the strong monoidal map $Pf \colon PB \to PA$.

  A monoidal pseudonatural transformation $\alpha\colon P_1 \to P_2$ gets mapped to the pseudonatural transformation $\alpha' \colon P_1' \to P_2'$ whose component at an object $A$ is the strong monoidal morphism $\alpha_A$: the compatibility cells for the monoidal structure are given by
  \begin{equation*}
\begin{tikzcd}
	{P_1A \otimes_{\mathcal{K}}P_1A} && {P_2A \otimes_{\mathcal{K}} P_2A} \\
	{P_1(A\times A)} && {P_2(A\times A)} \\
	{P_1A} && {P_2A}
	\arrow[""{name=0, anchor=center, inner sep=0}, "{\alpha_A \otimes_{\mathcal{K}} \alpha_A}", from=1-1, to=1-3]
	\arrow["{\mu^{P_1}_{A,A}}"', from=1-1, to=2-1]
	\arrow["{\mu^{P_2}_{A,A}}", from=1-3, to=2-3]
	\arrow[""{name=1, anchor=center, inner sep=0}, "{\alpha_{A \times A}}"', from=2-1, to=2-3]
	\arrow["{P_1(\Delta_A)}"', from=2-1, to=3-1]
	\arrow["{P_2(\Delta_A)}", from=2-3, to=3-3]
	\arrow[""{name=2, anchor=center, inner sep=0}, "{\alpha_A}"', from=3-1, to=3-3]
	\arrow["\alpha^*_{A,A}"{description}, color={rgb,255:red,92;green,92;blue,214}, draw=none, from=0, to=1]
	\arrow["\alpha_{\Delta_A}"{description}, color={rgb,255:red,92;green,92;blue,214}, draw=none, from=1, to=2]
\end{tikzcd},
 \end{equation*}
 \noindent where $\alpha^*$ denotes the invertible modification for the monoidal compatibility of $\alpha$, and 
\begin{equation*}
\begin{tikzcd}
	{I^{\mathcal{K}}} && {I^{\mathcal{K}}} && {I^{\mathcal{K}}} & {I^{\mathcal{K}}} \\
	&& {P_1(1)} && {P_1(1)} \\
	&& {P_1(1)} && {P_1(1)} & {P_1A} \\
	{I^{\mathcal{K}}} && {P_2(1)} && {P_2(1)} & {P_2A} \\
	{P_2(1)} && {P_2(1)} && {P_2(1)} & {P_2A}
	\arrow[""{name=0, anchor=center, inner sep=0}, equals, from=1-1, to=1-3]
	\arrow[equals, from=1-1, to=4-1]
	\arrow[""{name=1, anchor=center, inner sep=0}, equals, from=1-3, to=1-5]
	\arrow["{I^{P_1}}"', from=1-3, to=2-3]
	\arrow[""{name=2, anchor=center, inner sep=0}, equals, from=1-5, to=1-6]
	\arrow["{I^{P_1}}", from=1-5, to=2-5]
	\arrow["{P_1!_A \circ I^{P_1}}", from=1-6, to=3-6]
	\arrow[""{name=3, anchor=center, inner sep=0}, equals, from=2-3, to=2-5]
	\arrow[equals, from=2-3, to=3-3]
	\arrow[equals, from=2-5, to=3-5]
	\arrow[""{name=4, anchor=center, inner sep=0}, "{P_1(\id_1)}", from=3-3, to=3-5]
	\arrow["{\alpha_1}"', from=3-3, to=4-3]
	\arrow[""{name=5, anchor=center, inner sep=0}, "{P_1!_A}", from=3-5, to=3-6]
	\arrow["{\alpha_1}"', from=3-5, to=4-5]
	\arrow["{\alpha_A}", from=3-6, to=4-6]
	\arrow[""{name=6, anchor=center, inner sep=0}, "{I^{P_2}}"', from=4-1, to=4-3]
	\arrow["{I^{P^2}}"', from=4-1, to=5-1]
	\arrow[""{name=7, anchor=center, inner sep=0}, "{P_2(\id_1)}"', from=4-3, to=4-5]
	\arrow[equals, from=4-3, to=5-3]
	\arrow[""{name=8, anchor=center, inner sep=0}, "{P_2!_A}"', from=4-5, to=4-6]
	\arrow[equals, from=4-5, to=5-5]
	\arrow[equals, from=4-6, to=5-6]
	\arrow[""{name=9, anchor=center, inner sep=0}, equals, from=5-1, to=5-3]
	\arrow[""{name=10, anchor=center, inner sep=0}, equals, from=5-3, to=5-5]
	\arrow[""{name=11, anchor=center, inner sep=0}, "{P_2!_A}"', from=5-5, to=5-6]
	\arrow["\cong"{description}, color={rgb,255:red,92;green,92;blue,214}, draw=none, from=0, to=6]
	\arrow["{=}"{description}, color={rgb,255:red,92;green,92;blue,214}, draw=none, from=1, to=3]
	\arrow["{=}"{description}, color={rgb,255:red,92;green,92;blue,214}, draw=none, from=2, to=5]
	\arrow["{\iota^{P_1}_1}"{description}, color={rgb,255:red,92;green,92;blue,214}, draw=none, from=3, to=4]
	\arrow["{\alpha_{\id_1}}"{description}, color={rgb,255:red,92;green,92;blue,214}, draw=none, from=4, to=7]
	\arrow["{\alpha_{!_A}}"{description}, color={rgb,255:red,92;green,92;blue,214}, draw=none, from=5, to=8]
	\arrow["{{\iota^{P_2}_1}^{-1}}"{description}, color={rgb,255:red,92;green,92;blue,214}, draw=none, from=7, to=10]
	\arrow["{=}"{description}, color={rgb,255:red,92;green,92;blue,214}, draw=none, from=9, to=6]
	\arrow["{=}"{description}, color={rgb,255:red,92;green,92;blue,214}, draw=none, from=11, to=8]
\end{tikzcd}.
\end{equation*}
Similarly, its naturality cell for $f \colon A \to B$ in $\mathsf{C}$ is just $\alpha_f$ (seen as a monoidal 2-cell in $\mathcal{K}$). 

By the same line of reasoning, a monoidal modification $\aleph: \alpha \to \beta$ between monoidal pseudonatural transformations $P_1 \to P_2$ is such that each component $\aleph_A \colon \alpha_A \to \beta_A$ is a monoidal 2-cell between the strong monoidal morphisms $\alpha_A$ and $\beta_A$. The argument for the compatibility with the pseudomonoid multiplications is simple: since the 2-cell $\alpha_A \otimes_{P_2} \alpha_A \Rightarrow \alpha_A \otimes_{P_1}$ is given by $\frac{\alpha^*_{A,A}}{\alpha_{\Delta_A}}$, we just need to check that $\frac{\frac{\alpha^*_{A,A}}{\alpha_{\Delta_A}}}{\aleph_A}= \frac{\frac{\aleph_A \otimes \aleph_A}{\beta^*_{A,A}}}{\beta_{\Delta_A}}$ for every object $A$ in $\mathsf{C}$. But that readily follows from $\aleph$ being a monoidal modification, natural with respect to the morphism $\Delta_A^{\op} \colon A\times A \to A$ in $\mathsf{C}^{\op}$. Hence such a modification can be mapped to itself, but seen as a modification $\alpha'_A \to \beta'_A$. This gives a 2-functor $(-)' \colon \mathrm{SM}^{\lax}2\mathrm{Cat}_{\ps}(\mathsf{C}^{\op}, \mathcal{K}) \to 2\mathrm{Cat}_{\ps}(\mathsf{C}^{\op}, \mathrm{SM}(\mathcal{K}))$.

        Conversely, $P' \colon \mathsf{C}^{\op}\to \mathrm{SM}(\mathcal{K})$, induces a pseudofunctor $P \colon \mathsf{C}^{\op} \xrightarrow{P'} \SM(\mathcal{K}) \xrightarrow{U} \mathcal{K}$, where $U$ is the forgetful 2-functor. We can then construct  a monoidal laxator $P \otimes_{\mathcal{K}} P \xrightarrow{\mu^P} P(-\times-)$ for $P$ whose component at $(A,B)$ is $PA \otimes_{\mathcal{K}}   PB \xrightarrow {P(\pi _A) \otimes_{\mathcal{K}}   P(\pi _B)} P(A\times  B)^2 \xrightarrow {\otimes _{P(A \times  B)}} P(A\times  B)$, where \(\pi \) denotes projections as usual and $\otimes_{P'(A \times B)}$ is the pseudomonoid multiplication of $P'(A \times B)$. The monoidal unit is the morphism $I^{\mathcal{K}} \xrightarrow{I^{P(1)}} P(1)$ in $\mathcal{K}$. The fact that each $P'(f)$ is strong monoidal ensures that $\mu^P$ is pseudonatural: the naturality 2-cell of $\mu^P$ with respect to $f\times g\colon A \times B \to X \times Y$ in $\mathsf{C}$ is
        \begin{equation*}
\begin{tikzcd}
	{PX \otimes_{\mathcal{K}}  PY} && {PA \otimes_{\mathcal{K}}  PB} \\
	{P(X \times Y) \times P(X \times Y)} && {P(A \times B) \otimes_{\mathcal{K}}  P(A \times B)} \\
	{P(X\times Y)} && {P(A \times B)}
	\arrow[""{name=0, anchor=center, inner sep=0}, "{Pf \otimes_{\mathcal{K}}  Pg}", from=1-1, to=1-3]
	\arrow["{P(\pi_X) \otimes_{\mathcal{K}}  P(\pi_Y)}"', from=1-1, to=2-1]
	\arrow["{P(\pi_A) \otimes_{\mathcal{K}}  P(\pi_B)}", from=1-3, to=2-3]
	\arrow[""{name=1, anchor=center, inner sep=0}, "{\tiny{P(f\times g) \otimes_{\mathcal{K}}  P(f \times  g)}}", from=2-1, to=2-3]
	\arrow["{\otimes_{P(X\times Y)}}"', from=2-1, to=3-1]
	\arrow["{\otimes_{P(A\times B)}}", from=2-3, to=3-3]
	\arrow[""{name=2, anchor=center, inner sep=0}, "{P(f\times g)}"', from=3-1, to=3-3]
	\arrow["\cong"{description}, color={rgb,255:red,92;green,92;blue,214}, draw=none, from=0, to=1]
	\arrow["{\mu^{P(f\times g)}}"{description}, color={rgb,255:red,92;green,92;blue,214}, draw=none, from=1, to=2]
\end{tikzcd}.
        \end{equation*}

Since $\mathsf{C}$ is cartesian, $P$ is in fact symmetric monoidal with this structure: consider the pasting diagram below, where $\sigma$ denotes symmetries:
\begin{equation*}
\begin{tikzcd}
	{PX \otimes_{\mathcal{K}}  PY} && {PX \otimes_{\mathcal{K}}  PY} &&& {PY \otimes_{\mathcal{K}}  PX} \\
	\\
	{P(X \times Y)^2} && {P(Y\times X)^2} &&& {P(Y \times X) ^2} \\
	\\
	{P(X \times Y)} && {P(Y \times X)} &&& {P(Y \times X)}
	\arrow[""{name=0, anchor=center, inner sep=0}, equals, from=1-1, to=1-3]
	\arrow["{{\scriptstyle{P(\pi^{X \times Y}_X) \otimes_{\mathcal{K}}  P(\pi^{X \times Y}_Y)}}}"', from=1-1, to=3-1]
	\arrow[""{name=1, anchor=center, inner sep=0}, "{{\sigma^{\mathcal{K}}_{PX, PY}}}", from=1-3, to=1-6]
	\arrow["{{\scriptstyle{P(\pi^{Y \times X}_X) \otimes_{\mathcal{K}}  P(\pi^{Y \times X}_Y)}}}"', from=1-3, to=3-3]
	\arrow["{{P(\pi_Y) \otimes_{\mathcal{K}}  P(\pi_X)}}", from=1-6, to=3-6]
	\arrow[""{name=2, anchor=center, inner sep=0}, "{{\scriptstyle{P(\sigma^{\mathsf{C}}_{Y, X}) \otimes_{\mathcal{K}}  P(\sigma^{\mathsf{C}}_{Y, X})}}}"', from=3-1, to=3-3]
	\arrow["{{\otimes_{P'(X\times Y)}}}"', from=3-1, to=5-1]
	\arrow[""{name=3, anchor=center, inner sep=0}, equals, from=3-3, to=3-6]
	\arrow["{{\otimes_{P'(Y\times X)}}}"', from=3-3, to=5-3]
	\arrow["{{\otimes_{P'(Y\times X)}}}", from=3-6, to=5-6]
	\arrow[""{name=4, anchor=center, inner sep=0}, "{{P(\sigma^{\mathsf{C}}_{Y, X})}}"', from=5-1, to=5-3]
	\arrow[""{name=5, anchor=center, inner sep=0}, equals, from=5-3, to=5-6]
	\arrow["\cong"{description, pos=0.3}, shift right=5, color={rgb,255:red,92;green,92;blue,214}, draw=none, from=0, to=2]
	\arrow["{{\scriptstyle{\sigma^{\mathcal{K}}_{P(\pi^{Y \times X}_X) \otimes_{\mathcal{K}}  P(\pi^{Y\times X}_{Y})}}}}"{description}, shift left=4, color={rgb,255:red,92;green,92;blue,214}, draw=none, from=1, to=3]
	\arrow["\cong"{description}, color={rgb,255:red,92;green,92;blue,214}, draw=none, from=2, to=4]
	\arrow["{=}"{description}, shift left=2, color={rgb,255:red,92;green,92;blue,214}, draw=none, from=3, to=5]
\end{tikzcd}.
\end{equation*}
The top-left cell is invertible since $\pi^{X \times Y}_X = \pi^{Y \times X}_X \circ \sigma^{\mathsf{C}}_{X,Y}$ and $\pi^{X \times Y}_Y = \pi^{Y \times X}_Y \circ \sigma^{\mathsf{C}}_{X,Y}$ in $\mathsf{C}$, and so is the bottom-left cell since the morphism $\sigma^{\mathsf{C}}_{Y, X}$ must be mapped by $P'$ to a strong monoidal morphism.

If $\alpha' \colon P_1' \to P_2' \colon \mathsf{C}^{\op} \to \SM(\mathcal{K})$ is a pseudonatural transformation, then whiskering with the forgetful 2-functor $\SM(\mathcal{K}) \to \mathcal{K}$ gives a pseudonatural transformation $\alpha\coloneqq \colon P_1 \to P_2 \colon \mathsf{C}^{\op} \to \mathcal{K}$. It is in fact monoidal: since each $\alpha'_A$ is a strong morphism of pseudomonoids, we have invertible 2-cells $\mu^{\alpha'_A} \colon \otimes_{P_2'A} (\alpha'_A \otimes \alpha'_A) \Rightarrow \alpha'_A \otimes_{P_1'A}$, and we can combine it with the naturality squares for the projections to build the invertible modification witnessing that $\alpha= U\alpha'$ is monoidal. Explicitly, its component at objects $A$ and $B$ is the 2-cell 
\begin{equation*}
    \begin{tikzcd}
	{P_1A \otimes P_1B} && {P_2A \otimes P_2B} \\
	{P_1(A \times B)\otimes P_1(A\times B)} && {P_2(A \times B) \otimes P_2(A \times B)} \\
	{P_1(A\times B)} && {P_2(A\times B)}
	\arrow[""{name=0, anchor=center, inner sep=0}, "{\alpha_A \otimes \alpha_B}", from=1-1, to=1-3]
	\arrow["{P_1(\pi^{A \times B}_A) \otimes P_1(\pi^{A \times B}_B)}"', from=1-1, to=2-1]
	\arrow["{P_2(\pi^{A \times B}_A) \otimes P_2(\pi^{A \times B}_B)}", from=1-3, to=2-3]
	\arrow[""{name=1, anchor=center, inner sep=0}, "{\alpha_{A \times B} \otimes \alpha_{A \times B}}"', from=2-1, to=2-3]
	\arrow["{\otimes_{P_1'(A\times B)}}"', from=2-1, to=3-1]
	\arrow["{\otimes_{P'_2(A \times B)}}", from=2-3, to=3-3]
	\arrow[""{name=2, anchor=center, inner sep=0}, "{\alpha_{A \times B}}"', from=3-1, to=3-3]
	\arrow["{\alpha_{\pi^{A \times B}_A} \otimes \alpha_{\pi^{A \times B}_B}}"{description}, color={rgb,255:red,92;green,92;blue,214}, draw=none, from=0, to=1]
	\arrow["{\mu^{\alpha'}_{A \times B}}"{description}, color={rgb,255:red,214;green,92;blue,92}, draw=none, from=1, to=2]
\end{tikzcd}
\end{equation*}

Finally, if $\aleph' \colon \alpha' \to \beta'$ is a modification, then each component is a 2-morphism in $\SM(\mathcal{K})$, i.e., $\frac{\mu^{\alpha'_A}}{\aleph'_A} = \frac{\aleph'_A}{\mu^{\beta'_A}}$ for every object $A \in \mathsf{C}$. Once again applying the forgetful 2-functor yields a modification $\aleph \colon \alpha \Rightarrow \beta \colon P_1 \to P_2$. This is in fact monoidal by another explicit check using the construction of the monoidal structures for $\alpha$ and $\beta$, thus a 2-cell in $\mathrm{SM}^{\lax}2\mathrm{Cat}_{\ps}(\mathsf{C}^{\op}, \mathcal{K})$. Thus, whiskering with the forgetful 2-functor yields a 2-functor $\mathrm{SM}^{\lax}2\mathrm{Cat}_{\ps}(\mathsf{C}^{\op}, \mathcal{K}) \to \SM{\lax}(\mathsf{C}^{\op}, \mathcal{K})$, which is an inverse to $(-)'$.
\end{proof}

In a more restrictive manner, we also have a colax monoidal structure locally, for the existential quantifiers:  if $P$ is a $\mathfrak{C}$-existential monoidal structure and $f \colon A \to B$ is a morphism in $\mathfrak{C}^r$, then the left adjoint $\exists^P f$ to the strong monoidal functor $Pf \colon PB \to PA$ is colax monoidal by the Doctrinal Adjunction Theorem \cite{doctrinal}. Its monoidal structure is obtained from that of $Pf$ by taking mates.
With that, we can define generalised regular hyperdoctrines, where the Frob\"enius law only holds for morphisms in the right class (i.e., the ones for which existential quantification is defined):
    
\begin{definition}[$\mathfrak{C}$-regular hyperdoctrine]
    A  \(\mathfrak{C}\)-\emph{regular $\mathcal{K}$-hyperdoctrine} is a \(\mathfrak{C}\)-existential symmetric monoidal $\mathcal{K}$-structure for which the Frob\"enius law holds with respect to $\mathfrak{C}$: for each \(f \colon  A \to  B\) in \(\mathfrak{C}^r\), the canonical 2-cell

\begin{center}\adjustbox{max width=\textwidth}{\begin{tikzcd}
	{PB \otimes  PA} && {PB \otimes  PB} \\
	\\
	{PA \otimes  PA} && {PB\otimes  PB} \\
	\\
	PA && PB
	\arrow[""{name=0, anchor=center, inner sep=0}, "{{\mathrm {id}_{PB} \otimes  \exists  f}}", from=1-1, to=1-3]
	\arrow["{{Pf \otimes  \mathrm {id}_{PA}}}"', from=1-1, to=3-1]
	\arrow[equals, from=1-3, to=3-3]
	\arrow[""{name=1, anchor=center, inner sep=0}, "{{\exists  f\otimes  \exists  f}}", from=3-1, to=3-3]
	\arrow["{{\otimes_{PA}}}"', from=3-1, to=5-1]
	\arrow["{{\otimes_{PB}}}", from=3-3, to=5-3]
	\arrow[""{name=2, anchor=center, inner sep=0}, "{{\exists  f}}"', from=5-1, to=5-3]
	\arrow["{{\epsilon^f \otimes  v_{\exists  f}}}"', color={rgb,255:red,92;green,92;blue,214}, between={0.2}{0.8}, Rightarrow, from=1, to=0]
	\arrow["{\mathrm{mate}(\mu^{Pf})}"', color={rgb,255:red,92;green,92;blue,214}, between={0.2}{0.8}, Rightarrow, from=2, to=1]
\end{tikzcd}}\end{center}
 
 has an inverse \(\mathfrak{F}^r_f\), where \(\mu ^{Pf}\) is the laxator for \(Pf \colon  (PB, \otimes_{PB}, I^{PB}) \to  (PA, \otimes_{PA}, I^{PA})\).
 \end{definition}

\begin{remark}[{}]
\par{}We could also ask for the analogous cell \(\exists  f \circ  \otimes_{PA} \circ  (\mathrm {id}_{PA} \otimes  Pf) \Rightarrow  \otimes_{PB} \circ  (\exists  f \otimes  \mathrm {id}_{PB})\) to have an inverse
\(\mathfrak{F}^l_f\). Since \(\mathcal {K}\) is a symmetric monoidal 2-category, we can use its symmetry to show that \(\mathfrak{F}^l_f\) exists precisely when \(\mathfrak{F}^r_f\) does. \end{remark}

The matter of hyperdoctrines that are pseudofunctorial  versus strictly 2-functorial is more delicate, and it is one that must be contended with. Definition~\ref{def:ExistentialStructure} is made considerably simpler if we allow only 2-functors and 2-natural transformations as we do --- this is because 2-categories, 2-functors, and pseudonatural transformations do not form a 2-category, but rather a tricategory. For this reason, this paper focuses on the theory for 2-functorial hyperdoctrines.

Still, the pseudofunctorial examples to come can be strictified. Following a result of Power \cite{POWER1989165}, Lack showed that if $\mathcal{K}$ is a 2-category with an enhanced factorization system, then the inclusion of $\text{T-Alg}_s$ into $\text{Ps-T-Alg}$ for $T$ a 2-monad on $\mathcal{K}$ has
a left adjoint, and the components of the unit of the adjunction are equivalences in $\text{Ps-T-Alg}$ (Theorem 4.10 in \cite{LACK2002223}). In particular, any pseudofunctor $\mathcal{C}^{\op} \to \mathrm{Cat}$ for a 2-category $\mathcal{C}$ is pseudonaturally equivalent to a strict 2-functor $\mathcal{C}^{\op} \to \mathrm{Cat}$. However, it is not possible to simultaneously strictify pseudofunctors and pseudonatural transformations. Of course, these considerations do not come into play when the codomain of the pseudofunctors is a locally posetal $2$-category such as $\mathrm{Pos}$ and $\mathrm{Rel}$, as is the case for classical hyperdoctrines, and we can recover all classical regular hyperdoctrines:

\begin{example}
    If \(\mathfrak{C}\) is a trivial cartesian adequate triple  and \(\mathcal{K}=\mathrm{Pos}\) is the 2-category of posets, order-preserving maps and inequalities of maps,
then a \(\mathfrak{C}\)-regular $\mathcal{K}$-hyperdoctrine $P$ such that all fibres \(PA\) are cartesian monoids (i.e., \(\land \)-semilattices) is equivalent to a regular hyperdoctrine in the usual sense.
\end{example}

The following examples still use a trivial cartesian adequate triple, but have a non-cartesian structure of logical significance on the fibres:
\begin{example}[{}]
\par{}Consider the underlying monoidal poset of the \emph{Lawvere quantale}: the extended positive real numbers \([0,\infty ]\) ordered by \(\geq \) with addition as the tensor product. For each set \(A\), define \(PA\coloneqq  [0,\infty ]^{A}\) (i.e., a predicate of type \(A\) is a function \(A \to  [0, \infty ]\)), which is a monoidal poset under \(+\) and \(\geq \). For each function \(f\colon  A \to  B\),
precomposing predicates with \(f\) yields an order-preserving map \(Pf \colon  PB \to  PA\) (which is strict monoidal), and the assignment \(f \mapsto  Pf\) is strictly functorial. Each such \(Pf\) admits a left adjoint,
namely \(\exists  f \colon  PA \to  PB\) taking a predicate \(\phi \) over \(A\) to the predicate \(b \mapsto  \inf _{a \in  f^{-1}(b)} (\phi (a))\) over \(B\). It is easy to check that
this yields a regular $\mathrm{Pos}$-hyperdoctrine for the trivial adequate triple on $\mathrm{Set}$. Details on the logic of \([0, \infty ]\) can be found in \cite{bacci_2023}.\end{example}
\par{}There are also natural examples where we need to forego fibres being posets:
\begin{example}[{}]
\label{example:shulman}
\par{}If \(\mathsf {C}\) is a category with finite limits, then we can define a pseudofunctor
 \(P \colon  \mathcal {C}^{{\op}} \to  \SM{(\mathrm{Cat})}\) by mapping an object \(X\) to the slice category \(\mathsf {C}\downarrow  X\)
 (which is cartesian monoidal), and a morphism \(f \colon  X \to  Y\) to the (strong monoidal) functor \(Pf \colon  PY \to  PX\) given by pulling back along \(f\).
 Each such \(Pf\) has a left adjoint \(\Sigma  (f)\) (the \emph{dependent sum}), and the fact that these adjunctions satisfy the Beck-Chevalley and Frob\"enius properties
 is the well-known pullback pasting lemma.\par{}More interestingly, as shown in \cite{Shulman2011}, if \(\mathsf {Top}\) denotes the category of compactly generated topological spaces, 
then each slice  \(\mathsf {Top}\downarrow  B\) is a model category, and we can view its homotopy category as 
the object of predicates on \(B\). Each such \(\mathsf {Ho}(\mathsf {Top}\downarrow  B)\) is a monoidal category, 
and the contravariant mapping associating the space \(B\) to
it  strictifies to a $\mathfrak{C}$-regular $\mathrm{Cat}$-hyperdoctrine, where $\mathfrak{C}$ is the adequate triple on $\mathsf{Top}$ that has has the Serre fibrations as $\mathfrak{C}^l$, and arbitrary morphisms as $\mathfrak{C}^r$.\par{}The same paper also provides an example of a (co)-regular $\mathrm{Cat}$-hyperdoctrine for the adequate triple on locally compact Hausdorff spaces whose
left class includes all maps and whose right class consists of the proper maps. In this setting, the symmetric monoidal category of predicates over a space \(A\) is
the homotopy category of a model category of unbounded chain complexes of sheaves of abelian groups on \(A\).  \end{example}

The introduction of adequate triples relied on the fact that the Beck-Chevalley condition is faithfully captured by the algebra of spans, which form a double category. While we expect the reader to be familiar with the fundamentals of double categories, the terminology in the field can be notoriously varied, so we take the opportunity here to settle notation and jargon. The basic reference is \cite{grandis-2019-higher}, but the reader must beware the difference in terminology.


\begin{definition}[{Double category}]
\par{}By a \emph{double category} \(\mathbb {D}\) we mean a category \emph{weakly} internal$^{\footnotesize 3}$ to \(\mathsf{CAT}\). \footnote{$^{\footnotesize 3}$ This is what is sometimes referred to as a pseudo double category or weak double category in the literature.} More explicitly, it consists
of a category \(\mathbb {D}_0\) of objects and tight arrows (written vertically with the usual arrows), and 
a category \(\mathbb {D}_1\) of loose arrows (written horizontally with a marked arrow \(\mathrel {\mkern 3mu\vcenter {\hbox {$\shortmid $}}\mkern -10mu{\to }}\)) and squares between them, along with 
source and target functors \(S, T\colon  \mathbb {D}_1 \to  \mathbb {D}_0\), an identity assignment functor \(e \colon  \mathbb {D}_0 \to  \mathbb {D}_1\), and a composition functor \(\odot  \colon  \mathbb {D}_1 \times _{\mathbb {D}_0} \mathbb {D}_1 \to  \mathbb {D}_1\), equipped with
associativity \((X \odot  X') \odot  X'' \xrightarrow {\alpha _{X, X', X''}} X \odot  (X' \odot  X'')\) and unitality constraints \(e_{X_1} \odot  X \xrightarrow {\lambda _X} X\), \(X \odot  e_{X_2} \xrightarrow {\rho _X} X\) satisfying the usual coherence conditions (see Def. 2.1 in \cite{shulman-2008-framed-bicats}  for details). \end{definition}


\par{}We have four ways to compose data in \(\mathbb {D}\): tight arrows compose as in \(\mathbb {D}_0\),
 cells compose vertically as in \(\mathbb {D}_1\) and horizontally via \(\odot \), and loose arrows compose via the objects part of \(\odot \) (written in diagrammatic order). Note that
 composition of loose arrows is only weakly associative and unital, which is why the descriptive terminology `loose and tight' is used over the `horizontal and vertical' common in the literature (c.f. \cite{grandis-2019-higher}).
 If this composition is strict, we will say that \(\mathbb {D}\) is a strict double category.
 
 \par{} If $X$ is a loose morphism, we write $v_X$ for its identity square in $\mathbb{D}_1$. We will reserve the notation \(\mathrm {id}_X\) for tight identities, \(\alpha  \mid  \beta \) for the horizontal composition of cells, and 
 \(\frac {\alpha }{\beta }\) for vertical composition, in diagrammatic order. We thus have that 
 \(\frac {(\alpha  \mid  \beta )}{(\delta  \mid  \gamma )} = (\frac {\alpha }{ \delta}) \mid  (\frac{\beta}{\gamma})\) for all suitably composable squares, so that composition can be read off of a diagram
unambiguously.
\begin{center}\begin{tikzcd}
	{A_1} && {A_2} && {A_3} \\
	\\
	{B_1} && {B_2} && {B_3} \\
	\\
	{C_1} && {C_2} && {C_3}
	\arrow[""{name=0, anchor=center, inner sep=0}, "\shortmid"{marking}, from=1-1, to=1-3]
	\arrow["{{f_1}}"', from=1-1, to=3-1]
	\arrow[""{name=1, anchor=center, inner sep=0}, "\shortmid"{marking}, from=1-3, to=1-5]
	\arrow["{{g_1}}", from=1-3, to=3-3]
	\arrow["{{h_1}}", from=1-5, to=3-5]
	\arrow[""{name=2, anchor=center, inner sep=0}, "\shortmid"{marking}, from=3-1, to=3-3]
	\arrow["{{f_2}}"', from=3-1, to=5-1]
	\arrow[""{name=3, anchor=center, inner sep=0}, "\shortmid"{marking}, from=3-3, to=3-5]
	\arrow["{{g_2}}", from=3-3, to=5-3]
	\arrow["{{h_2}}", from=3-5, to=5-5]
	\arrow[""{name=4, anchor=center, inner sep=0}, "\shortmid"{marking}, from=5-1, to=5-3]
	\arrow[""{name=5, anchor=center, inner sep=0}, "\shortmid"{marking}, from=5-3, to=5-5]
	\arrow["{\alpha }"{description}, color={rgb,255:red,92;green,92;blue,214}, draw=none, from=2, to=0]
	\arrow["{\beta }"{description}, color={rgb,255:red,92;green,92;blue,214}, draw=none, from=3, to=1]
	\arrow["{\delta }"{description}, color={rgb,255:red,92;green,92;blue,214}, draw=none, from=4, to=2]
	\arrow["{\gamma }"{description}, color={rgb,255:red,92;green,92;blue,214}, draw=none, from=5, to=3]
\end{tikzcd}\end{center}

 \par{}There are many ways to produce double categories via involution. Most importantly for us,
 flipping the direction of the tight arrows (but not of the loose arrows) in a double category \(\mathbb {D}\) yields
 a new double category \(\mathbb {D}^{{\op}}\) in the obvious way.\\

 There are many notions of morphism between double categories. Here we focus on pseudo double functors, but the reader determined to keep regular hyperdoctrines pseudofunctorial rather than 2-functorial may be interested in the more general notion of double pseudofunctor (c.f. \cite{Shulman2011}):

 \begin{definition}[{Double pseudofunctor}]
\label{def:DoublePseudofunctor}
A double pseudofunctor \(Q \colon  \mathbb {A} \to  \mathbb {B}\) between double categories consists of mappings $Q_0 \colon  \mathbb {A}_0 \to  \mathbb {B}_0$ and $Q_1 \colon  \mathbb {A}_1 \to  \mathbb {B}_1$ (the tight and loose components of \(Q\), respectively$^{\footnotesize 4}$ \footnote{$^{\footnotesize 4}$ We will simply write $Q$ for both $Q_0$ and $Q_1$ on diagrams.})
    that preserve sources and targets (i.e., \(S \circ  Q_1 = Q_0 \circ  S\) and 
\(T \circ  Q_1 = Q_0 \circ  T\)). They come equipped with structure squares

\begin{equation*}
 \begin{tikzcd}
	&&&&&& QA & QA \\
	QA && QA & {QX_1} && {QX_2} & QA & QA \\
	QA && QA & {QX_1} & {QX_2} & {QX_2} & {QZ_1} & {QZ_1} \\
	{QX_1} & {QX_2} & {QX_3} & {QX_1} && {QX_2} & {QY_1} \\
	{QX_1} && {QX_3} & {QX_1} & {QX_1} & {QX_2} & {QX_1} & {QX_1}
	\arrow[""{name=0, anchor=center, inner sep=0}, "\shortmid"{marking}, equals, from=1-7, to=1-8]
	\arrow[equals, from=1-7, to=2-7]
	\arrow["{Q(\id_A)}", from=1-8, to=2-8]
	\arrow[""{name=1, anchor=center, inner sep=0}, "\shortmid"{marking}, equals, from=2-1, to=2-3]
	\arrow[equals, from=2-1, to=3-1]
	\arrow[equals, from=2-3, to=3-3]
	\arrow[""{name=2, anchor=center, inner sep=0}, "QX"{inner sep=.8ex}, "\shortmid"{marking}, from=2-4, to=2-6]
	\arrow[equals, from=2-4, to=3-4]
	\arrow[equals, from=2-6, to=3-6]
	\arrow[""{name=3, anchor=center, inner sep=0}, "\shortmid"{marking}, equals, from=2-7, to=2-8]
	\arrow[""{name=4, anchor=center, inner sep=0}, "{Q(e_A)}"'{inner sep=.8ex}, "\shortmid"{marking}, from=3-1, to=3-3]
	\arrow["QX"'{inner sep=.8ex}, "\shortmid"{marking}, from=3-4, to=3-5]
	\arrow["\shortmid"{marking}, equals, from=3-5, to=3-6]
	\arrow[""{name=5, anchor=center, inner sep=0}, "\shortmid"{marking}, equals, from=3-7, to=3-8]
	\arrow["{{Qg_1}}"', from=3-7, to=4-7]
	\arrow["{{Q(f_1 g_1)}}", from=3-8, to=5-8]
	\arrow["QX"{inner sep=.8ex}, "\shortmid"{marking}, from=4-1, to=4-2]
	\arrow[equals, from=4-1, to=5-1]
	\arrow["{QX'}"{inner sep=.8ex}, "\shortmid"{marking}, from=4-2, to=4-3]
	\arrow[equals, from=4-3, to=5-3]
	\arrow[""{name=6, anchor=center, inner sep=0}, "QX"{inner sep=.8ex}, "\shortmid"{marking}, from=4-4, to=4-6]
	\arrow[equals, from=4-4, to=5-4]
	\arrow[equals, from=4-6, to=5-6]
	\arrow["{{Qf_1}}"', from=4-7, to=5-7]
	\arrow[""{name=7, anchor=center, inner sep=0}, "{Q(X \odot X')}"'{inner sep=.8ex}, "\shortmid"{marking}, from=5-1, to=5-3]
	\arrow["\shortmid"{marking}, equals, from=5-4, to=5-5]
	\arrow["QX"'{inner sep=.8ex}, "\shortmid"{marking}, from=5-5, to=5-6]
	\arrow[""{name=8, anchor=center, inner sep=0}, "\shortmid"{marking}, equals, from=5-7, to=5-8]
	\arrow["{\iota^Q_A}"{description}, color={rgb,255:red,214;green,92;blue,92}, draw=none, from=3, to=0]
	\arrow["{(Q_e)_A}"{description}, color={rgb,255:red,92;green,92;blue,214}, draw=none, from=4, to=1]
	\arrow["{\rho^Q_X}"{description, pos=0.4}, color={rgb,255:red,92;green,92;blue,214}, draw=none, from=3-5, to=2]
	\arrow["{(Q_\odot)_{X, X'}}"{description}, color={rgb,255:red,92;green,92;blue,214}, draw=none, from=7, to=4-2]
	\arrow["{\lambda^Q_X}"{description, pos=0.4}, color={rgb,255:red,92;green,92;blue,214}, draw=none, from=5-5, to=6]
	\arrow["{\gamma^Q_{f_1,g_1}}"{description}, color={rgb,255:red,214;green,92;blue,92}, draw=none, from=8, to=5]
\end{tikzcd}
\end{equation*}

\noindent in $\mathbb{B}$ for each object $A$, composable tight morphisms $f_1, g_1$, loose morphism $X$, and composable loose morphisms $X$ and $X'$ in $\mathbb{A}$, which are globular isomorphisms (i.e., isomorphisms in $\mathbb{B}_1$ for the blue squares and in $\mathbb{B}_0$ for the red ones). They make $Q_0$ and $Q_1$ weakly preserve tight and loose composition in the following sense: 

\begin{itemize}
    \item{}[Unital coherence]: For every loose arrow \(X_1 \overset {X}{\mathrel {\mkern 3mu\vcenter {\hbox {$\shortmid $}}\mkern -10mu{\to }}} X_2  \in  \mathbb {A}\), 
    
\begin{center}
	\adjustbox{max width=\textwidth}{\begin{tikzcd}
	{QX_1} &&&& {QX_2} && {QX_1} &&&& {QX_2} \\
	{QX_1} && {QX_1} && {QX_2} && {QX_1} && {Q(X_2)} && {QX_2} \\
	&&&&& {=} \\
	{QX_1} && {QX_1} && {QX_2} && {QX_1} && {QX_2} && {QX_2} \\
	{QX_1} &&&& {QX_2} && {QX_1} &&&& {QX_2}
	\arrow[""{name=0, anchor=center, inner sep=0}, "QX"{inner sep=.8ex}, "\shortmid"{marking}, from=1-1, to=1-5]
	\arrow[equals, from=1-1, to=2-1]
	\arrow[equals, from=1-5, to=2-5]
	\arrow[""{name=1, anchor=center, inner sep=0}, "QX"{inner sep=.8ex}, "\shortmid"{marking}, from=1-7, to=1-11]
	\arrow[equals, from=1-7, to=2-7]
	\arrow[equals, from=1-11, to=2-11]
	\arrow[""{name=2, anchor=center, inner sep=0}, "\shortmid"{marking}, equals, from=2-1, to=2-3]
	\arrow[equals, from=2-1, to=4-1]
	\arrow[""{name=3, anchor=center, inner sep=0}, "QX"'{inner sep=.8ex}, "\shortmid"{marking}, from=2-3, to=2-5]
	\arrow["{Q(\id_{X_1})}"', from=2-3, to=4-3]
	\arrow[equals, from=2-5, to=4-5]
	\arrow[""{name=4, anchor=center, inner sep=0}, "QX"'{inner sep=.8ex}, "\shortmid"{marking}, from=2-7, to=2-9]
	\arrow[equals, from=2-7, to=4-7]
	\arrow[""{name=5, anchor=center, inner sep=0}, "\shortmid"{marking}, equals, from=2-9, to=2-11]
	\arrow[equals, from=2-9, to=4-9]
	\arrow["{Q(\id_{X_2})}", from=2-11, to=4-11]
	\arrow[""{name=6, anchor=center, inner sep=0}, "\shortmid"{marking}, equals, from=4-1, to=4-3]
	\arrow[equals, from=4-1, to=5-1]
	\arrow[""{name=7, anchor=center, inner sep=0}, "QX"'{inner sep=.8ex}, "\shortmid"{marking}, from=4-3, to=4-5]
	\arrow[equals, from=4-5, to=5-5]
	\arrow[""{name=8, anchor=center, inner sep=0}, "QX"'{inner sep=.8ex}, "\shortmid"{marking}, from=4-7, to=4-9]
	\arrow[draw=none, from=4-7, to=5-7]
	\arrow[equals, from=4-7, to=5-7]
	\arrow[""{name=9, anchor=center, inner sep=0}, "\shortmid"{marking}, equals, from=4-9, to=4-11]
	\arrow[equals, from=4-11, to=5-11]
	\arrow[""{name=10, anchor=center, inner sep=0}, "QX"'{inner sep=.8ex}, "\shortmid"{marking}, from=5-1, to=5-5]
	\arrow[""{name=11, anchor=center, inner sep=0}, "QX"'{inner sep=.8ex}, "\shortmid"{marking}, from=5-7, to=5-11]
	\arrow["{\lambda_{QX}}"{description}, draw=none, color={rgb,255:red,92;green,92;blue,214}, between={0.2}{0.8}, Rightarrow, from=2-3, to=0]
	\arrow["{{\rho _{QX}}}"{description}, draw=none, color={rgb,255:red,92;green,92;blue,214}, between={0.2}{0.8}, Rightarrow, from=2-9, to=1]
	\arrow["{\iota^Q_{X_1}}"{description}, draw=none, color={rgb,255:red,214;green,92;blue,92}, between={0.2}{0.8}, Rightarrow, from=6, to=2]
	\arrow["{{{Q(v_X)}}}"'{description}, draw=none, between={0.2}{0.8}, Rightarrow, from=7, to=3]
	\arrow["{{{v_{QX}}}}"'{description}, draw=none, between={0.2}{0.8}, Rightarrow, from=8, to=4]
	\arrow["{\iota^Q_{X_2}}"'{description}, draw=none, color={rgb,255:red,214;green,92;blue,92}, between={0.2}{0.8}, Rightarrow, from=9, to=5]
	\arrow["{{\lambda ^{-1}_{QX}}}"'{description}, draw=none, color={rgb,255:red,92;green,92;blue,214}, between={0.2}{1}, Rightarrow, from=10, to=4-3]
	\arrow["{{\rho ^{-1}_{QX}}}"'{description}, draw=none, color={rgb,255:red,92;green,92;blue,214}, between={0.2}{1}, Rightarrow, from=11, to=4-9]
\end{tikzcd};} \end{center}

\item{}[Compositional coherence]: For every composable pair of cells \(\alpha , \beta  \in  \mathbb {A}_1\), 
\begin{center}  \adjustbox{max width=\textwidth}{\begin{tikzcd}
	{QZ_1} &&&& {QZ_2} && {QZ_1} &&&& {QZ_2} \\
	{QZ_1} && {QZ_1} && {QZ_2} && {QZ_1} && {QZ_2} && {QZ_2} \\
	{QY_1} &&&&& {=} & {QY_1} && {QY_2} \\
	{QX_1} && {QX_1} && {QX_2} && {QX_1} && {QX_2} && {QX_2} \\
	{QX_1} &&&& {QX_2} && {QX_1} &&&& {QX_2}
	\arrow[""{name=0, anchor=center, inner sep=0}, "QZ"{inner sep=.8ex}, "\shortmid"{marking}, from=1-1, to=1-5]
	\arrow[equals, from=1-1, to=2-1]
	\arrow[equals, from=1-5, to=2-5]
	\arrow[""{name=1, anchor=center, inner sep=0}, "QZ"{inner sep=.8ex}, "\shortmid"{marking}, from=1-7, to=1-11]
	\arrow[no head, from=1-11, to=2-11]
	\arrow[""{name=2, anchor=center, inner sep=0}, "\shortmid"{marking}, equals, from=2-1, to=2-3]
	\arrow["{{Qg_1}}"', from=2-1, to=3-1]
	\arrow[""{name=3, anchor=center, inner sep=0}, "\shortmid"{marking}, from=2-3, to=2-5]
	\arrow["{{Q(f_1 g_1)}}"', from=2-3, to=4-3]
	\arrow["{{Q(f_2g_2)}}", from=2-5, to=4-5]
	\arrow[equals, from=2-7, to=1-7]
	\arrow[""{name=4, anchor=center, inner sep=0}, "QZ"{inner sep=.8ex}, "\shortmid"{marking}, from=2-7, to=2-9]
	\arrow["{{Qg_1}}"', from=2-7, to=3-7]
	\arrow[draw=none, from=2-9, to=2-11]
	\arrow[""{name=5, anchor=center, inner sep=0}, "\shortmid"{marking}, equals, from=2-9, to=2-11]
	\arrow["{{Qg_2}}", from=2-9, to=3-9]
	\arrow["{{Q(f_2g_2)}}", from=2-11, to=4-11]
	\arrow["{{Qf_1}}"', from=3-1, to=4-1]
	\arrow[""{name=6, anchor=center, inner sep=0}, "{QY }"{inner sep=.8ex}, "\shortmid"{marking}, from=3-7, to=3-9]
	\arrow["{{Qf_1}}"', from=3-7, to=4-7]
	\arrow["{{Qf_2}}", from=3-9, to=4-9]
	\arrow[""{name=7, anchor=center, inner sep=0}, "\shortmid"{marking}, equals, from=4-1, to=4-3]
	\arrow[""{name=8, anchor=center, inner sep=0}, "\shortmid"{marking}, from=4-3, to=4-5]
	\arrow[equals, from=4-5, to=5-5]
	\arrow[""{name=9, anchor=center, inner sep=0}, "\shortmid"{marking}, from=4-7, to=4-9]
	\arrow[""{name=10, anchor=center, inner sep=0}, "\shortmid"{marking}, equals, from=4-9, to=4-11]
	\arrow[equals, from=4-11, to=5-11]
	\arrow[equals, from=5-1, to=4-1]
	\arrow[""{name=11, anchor=center, inner sep=0}, "QX"'{inner sep=.8ex}, "\shortmid"{marking}, from=5-1, to=5-5]
	\arrow[equals, from=5-7, to=4-7]
	\arrow[""{name=12, anchor=center, inner sep=0}, "QX"'{inner sep=.8ex}, "\shortmid"{marking}, from=5-7, to=5-11]
	\arrow["{{\lambda _{QZ}}}"{description}, color={rgb,255:red,92;green,92;blue,214}, draw=none, from=2-3, to=0]
	\arrow["{{\rho _{QZ}}}"{description}, color={rgb,255:red,92;green,92;blue,214}, draw=none, from=2-9, to=1]
	\arrow["{{Q\beta }}"{description}, draw=none, from=6, to=4]
	\arrow["{{\gamma^Q_{f_1,g_1}}}"{description}, color={rgb,255:red,214;green,92;blue,92}, draw=none, from=7, to=2]
	\arrow["{{Q(\frac {\beta }{\alpha })}}"{description}, draw=none, from=8, to=3]
	\arrow["{{Q\alpha }}"{description}, draw=none, from=9, to=6]
	\arrow["{{\gamma^Q_{f_2, g_2}}}"{description}, color={rgb,255:red,214;green,92;blue,92}, draw=none, from=10, to=5]
	\arrow["{{\lambda ^{-1}_{QX}}}"{description}, color={rgb,255:red,92;green,92;blue,214}, draw=none, from=11, to=4-3]
	\arrow["{{\rho ^{-1}_{QX}}}"{description}, color={rgb,255:red,92;green,92;blue,214}, draw=none, from=12, to=4-9]
\end{tikzcd};}\end{center}

\item{}[Double associativity]: For all composable \(X_1 \overset {X}{\mathrel {\mkern 3mu\vcenter {\hbox {$\shortmid $}}\mkern -10mu{\to }}} X_2 \overset {X'}{\mathrel {\mkern 3mu\vcenter {\hbox {$\shortmid $}}\mkern -10mu{\to }}} X_3 \overset {X''}{\mathrel {\mkern 3mu\vcenter {\hbox {$\shortmid $}}\mkern -10mu{\to }}} X_4\),  
\begin{center}\adjustbox{max width=\textwidth}{\begin{tikzcd}
	{QX_1} && {QX_2} && {QX_3} && {QX_4} && {QX_1} && {QX_2} && {QX_3} && {QX_4} \\
	{QX_1} &&&& {QX_3} && {QX_4} && {QX_1} && {QX_2} &&&& {QX_4} \\
	&&&&&&& {=} & {QX_1} &&&&&& {QX_4} \\
	{QX_1} &&&&&& {QX_4} && {QX_1} &&&&&& {QX_4}
	\arrow["QX"{inner sep=.8ex}, "\shortmid"{marking}, from=1-1, to=1-3]
	\arrow[equals, from=1-1, to=2-1]
	\arrow["{QX'}"{inner sep=.8ex}, "\shortmid"{marking}, from=1-3, to=1-5]
	\arrow[""{name=0, anchor=center, inner sep=0}, "{QX''}"{inner sep=.8ex}, "\shortmid"{marking}, from=1-5, to=1-7]
	\arrow[equals, from=1-5, to=2-5]
	\arrow[equals, from=1-7, to=2-7]
	\arrow[""{name=1, anchor=center, inner sep=0}, "QX"{inner sep=.8ex}, "\shortmid"{marking}, from=1-9, to=1-11]
	\arrow[equals, from=1-9, to=2-9]
	\arrow["{QX'}"{inner sep=.8ex}, "\shortmid"{marking}, from=1-11, to=1-13]
	\arrow[equals, from=1-11, to=2-11]
	\arrow["{QX''}"{inner sep=.8ex}, "\shortmid"{marking}, from=1-13, to=1-15]
	\arrow[equals, from=1-15, to=2-15]
	\arrow[""{name=2, anchor=center, inner sep=0}, "{Q(X \odot X')}"'{inner sep=.8ex}, "\shortmid"{marking}, from=2-1, to=2-5]
	\arrow[""{name=3, anchor=center, inner sep=0}, equals, from=2-1, to=4-1]
	\arrow[""{name=4, anchor=center, inner sep=0}, "{QX''}"'{inner sep=.8ex}, "\shortmid"{marking}, from=2-5, to=2-7]
	\arrow[""{name=5, anchor=center, inner sep=0}, equals, from=2-7, to=4-7]
	\arrow[""{name=6, anchor=center, inner sep=0}, "QX"'{inner sep=.8ex}, "\shortmid"{marking}, from=2-9, to=2-11]
	\arrow[""{name=7, anchor=center, inner sep=0}, equals, from=2-9, to=3-9]
	\arrow[""{name=8, anchor=center, inner sep=0}, "{Q(X' \odot X'')}"'{inner sep=.8ex}, "\shortmid"{marking}, from=2-11, to=2-15]
	\arrow[""{name=9, anchor=center, inner sep=0}, equals, from=2-15, to=3-15]
	\arrow[""{name=10, anchor=center, inner sep=0}, "{Q(X \odot (X' \odot X''))}"'{inner sep=.8ex}, "\shortmid"{marking}, from=3-9, to=3-15]
	\arrow[equals, from=3-9, to=4-9]
	\arrow[equals, from=3-15, to=4-15]
	\arrow["{Q((X\odot X') \odot X'')}"'{inner sep=.8ex}, "\shortmid"{marking}, from=4-1, to=4-7]
	\arrow[""{name=11, anchor=center, inner sep=0}, "{Q((X\odot X') \odot X'')}"'{inner sep=.8ex}, "\shortmid"{marking}, from=4-9, to=4-15]
	\arrow["{{(Q_\odot )_{X, X'}}}"{description, pos=0.6}, color={rgb,255:red,92;green,92;blue,214}, draw=none, from=2, to=1-3]
	\arrow["{{(Q_{\odot })_{X\odot  X', X''}}}"{description}, color={rgb,255:red,92;green,92;blue,214}, draw=none, from=3, to=5]
	\arrow["{{v_{QX''}}}"{description}, draw=none, from=4, to=0]
	\arrow["{{v_{QX}}}"{description}, draw=none, from=6, to=1]
	\arrow["{{(Q_{\odot })_{X, X' \odot  X''}}}"{description}, color={rgb,255:red,92;green,92;blue,214}, draw=none, from=7, to=9]
	\arrow["{{(Q_{\odot })_{X', X''}}}"{description}, color={rgb,255:red,92;green,92;blue,214}, draw=none, from=8, to=1-13]
	\arrow["{{Q(\alpha _{X, X', X''})}}"{description}, draw=none, from=11, to=10]
\end{tikzcd};}\end{center}

\item{}[Double unitality]: For all loose arrows \(X_1 \overset {X}{\mathrel {\mkern 3mu\vcenter {\hbox {$\shortmid $}}\mkern -10mu{\to }}} X_1\) in \(\mathbb {A}\), 
\begin{center}\adjustbox{max width=\textwidth}{\begin {tikzcd}[cramped]
	{QX_1} &&&& {QX_2} \\
	{QX_1} && {QX_1} && {QX_2} && {QX_1} &&&& {QX_2} \\
	&&&&& {=} \\
	{QX_1} && {QX_1} && {QX_2} && {QX_1} &&&& {QX_2} \\
	\\
	{QX_1} &&&& {QX_2} && {}
	\arrow[""{name=0, anchor=center, inner sep=0}, "QX", "\shortmid "{marking}, from=1-1, to=1-5]
	\arrow[equals, from=2-1, to=1-1]
	\arrow[""{name=1, anchor=center, inner sep=0}, "\shortmid "{marking}, equals, from=2-1, to=2-3]
	\arrow[equals, from=2-1, to=4-1]
	\arrow[""{name=2, anchor=center, inner sep=0}, "QX", "\shortmid "{marking}, from=2-3, to=2-5]
	\arrow[equals, from=2-3, to=4-3]
	\arrow[equals, from=2-5, to=1-5]
	\arrow[equals, from=2-5, to=4-5]
	\arrow[""{name=3, anchor=center, inner sep=0}, "QX", "\shortmid "{marking}, from=2-7, to=2-11]
	\arrow[equals, from=2-7, to=4-7]
	\arrow[equals, from=2-11, to=4-11]
	\arrow[""{name=4, anchor=center, inner sep=0}, "{Q(e_{X_1})}"', "\shortmid "{marking}, from=4-1, to=4-3]
	\arrow[equals, from=4-1, to=6-1]
	\arrow[""{name=5, anchor=center, inner sep=0}, "QX"', "\shortmid "{marking}, from=4-3, to=4-5]
	\arrow[equals, from=4-5, to=6-5]
	\arrow[""{name=6, anchor=center, inner sep=0}, "{Q(e_{X_1} \odot  X)}"', "\shortmid "{marking}, from=4-7, to=4-11]
	\arrow[""{name=7, anchor=center, inner sep=0}, "{Q(e_{X_1} \odot  X)}"', "\shortmid "{marking}, from=6-1, to=6-5]
	\arrow["{\lambda _{QX}}"'{description}, draw=none, color={rgb,255:red,92;green,92;blue,214}, shorten >=3pt, Rightarrow, from=2-3, to=0]
	\arrow["{(Q_e)_X}"{description}, draw=none, color={rgb,255:red,92;green,92;blue,214}, shorten <=9pt, shorten >=9pt, Rightarrow, from=4, to=1]
	\arrow["{v_{QX}}"'{description}, draw=none, shorten <=9pt, shorten >=9pt, Rightarrow, from=5, to=2]
	\arrow["{Q(\lambda _X)}"'{description}, draw=none, shorten <=9pt, shorten >=9pt, Rightarrow, from=6, to=3]
	\arrow["{(Q_{\odot })_{e_{X_1}, X}}"'{description}, draw=none, color={rgb,255:red,92;green,92;blue,214}, shorten <=7pt, Rightarrow, from=7, to=4-3]
\end {tikzcd}}\end{center}

\begin{center}\adjustbox{max width=\textwidth}{\begin {tikzcd}[cramped]
	{QX_1} &&&& {QX_2} \\
	{QX_1} && {QX_2} && {QX_2} && {QX_1} &&& {QX_2} \\
	&&&&& {=} \\
	{QX_1} && {QX_2} && {QX_2} && {QX_1} &&& {QX_2} \\
	\\
	{QX_1} &&&& {QX_2} \\
	\arrow[""{name=0, anchor=center, inner sep=0}, "QX", "\shortmid "{marking}, from=1-1, to=1-5]
	\arrow[equals, from=2-1, to=1-1]
	\arrow[""{name=1, anchor=center, inner sep=0}, "QX", "\shortmid "{marking}, from=2-1, to=2-3]
	\arrow[equals, from=2-1, to=4-1]
	\arrow[""{name=2, anchor=center, inner sep=0}, "\shortmid "{marking}, equals, from=2-3, to=2-5]
	\arrow[equals, from=2-3, to=4-3]
	\arrow[equals, from=2-5, to=1-5]
	\arrow[equals, from=2-5, to=4-5]
	\arrow[""{name=3, anchor=center, inner sep=0}, "QX", "\shortmid "{marking}, from=2-7, to=2-10]
	\arrow[equals, from=2-7, to=4-7]
	\arrow[equals, from=2-10, to=4-10]
	\arrow[""{name=4, anchor=center, inner sep=0}, "QX"', "\shortmid "{marking}, from=4-1, to=4-3]
	\arrow[equals, from=4-1, to=6-1]
	\arrow[""{name=5, anchor=center, inner sep=0}, "{Q(e_{X_2})}"', "\shortmid "{marking}, from=4-3, to=4-5]
	\arrow[equals, from=4-5, to=6-5]
	\arrow[""{name=6, anchor=center, inner sep=0}, "{Q(X \odot  e_{X_2})}"', "\shortmid "{marking}, from=4-7, to=4-10]
	\arrow[""{name=7, anchor=center, inner sep=0}, "{Q(X \odot  e_{X_2})}"', "\shortmid "{marking}, from=6-1, to=6-5]
	\arrow["{\rho _{QX}}"'{description}, draw=none, color={rgb,255:red,92;green,92;blue,214}, shorten >=3pt, Rightarrow, from=2-3, to=0]
	\arrow["{v_{QX}}"'{description}, draw=none, shorten <=9pt, shorten >=9pt, Rightarrow, from=4, to=1]
	\arrow["{(Q_e)_{X_2}}"'{description}, draw=none, color={rgb,255:red,92;green,92;blue,214}, shorten <=9pt, shorten >=9pt, Rightarrow, from=5, to=2]
	\arrow["{Q(\rho _{X})}"'{description}, draw=none, shorten <=9pt, shorten >=9pt, Rightarrow, from=6, to=3]
	\arrow["{(Q_\odot )_{X, e_{X_2}}}"'{description}, draw=none, color={rgb,255:red,92;green,92;blue,214}, shorten <=7pt, Rightarrow, from=7, to=4-3]
\end {tikzcd}.}\end{center}
 \end{itemize}

 Moreover, $Q_e$ and $Q_{\odot}$ are natural in the sense that for any tight arrow $f \colon X \to Y$ and horizontally composable squares 
 \begin{equation*}
      \begin{tikzcd}
	{X_1} & {X_2} & {X_3} \\
	{Y_1} & {Y_2} & {Y_3}
	\arrow[""{name=0, anchor=center, inner sep=0}, "X"{inner sep=.8ex}, "\shortmid"{marking}, from=1-1, to=1-2]
	\arrow["{f_1}"', from=1-1, to=2-1]
	\arrow[""{name=1, anchor=center, inner sep=0}, "{X'}"{inner sep=.8ex}, "\shortmid"{marking}, from=1-2, to=1-3]
	\arrow["{f_2}", from=1-2, to=2-2]
	\arrow["{f_3}", from=1-3, to=2-3]
	\arrow[""{name=2, anchor=center, inner sep=0}, "Y"'{inner sep=.8ex}, "\shortmid"{marking}, from=2-1, to=2-2]
	\arrow[""{name=3, anchor=center, inner sep=0}, "{Y'}"'{inner sep=.8ex}, "\shortmid"{marking}, from=2-2, to=2-3]
	\arrow["\beta", shift right=2, draw=none, from=1, to=3]
	\arrow["\alpha"', shift left=2, draw=none, from=2, to=0]
\end{tikzcd}
 \end{equation*}
\noindent in $\mathbb{A}$, we have $\frac{(Q_e)_X}{Q(e_f)} = \frac{e_{Qf}}{(Q_e)_Y}$ and $\frac{Q(\alpha\mid \beta)}{(Q_{\odot})_{Y,Y'}} = \frac{(Q_{\odot})_{X, X'}}{Q\alpha\ \mid\ Q\beta}$.\\
 
A \emph{pseudo double functor} $Q$ (note the order of the words) is a double pseudofunctor for which all squares $\iota^Q_A$ and $\gamma^Q_{f, g}$ are horizontal identities, i.e., $Q_0$ is a strict functor $\mathbb{A}_0 \to \mathbb{B}_0$. If it also strictly preserves loose identities, we say it is a \emph{normal double functor}.\end{definition}

If $Q$ and $Q'$ are pseudo double functors, the notion of map between them that we will use is that of a \emph{tight transformation}. Once again, we describe a slightly more general notion in case the reader is interested in double pseudofunctors instead:
\begin{definition}[{Tight pseudotransformation, c.f. \cite{grandis-1999-limits}}]
\par{}A tight pseudotransformation \(\phi  \colon  F \Rightarrow  G\) between double pseudofunctors \(F, G \colon  \mathbb {A} \to  \mathbb {B}\) consists of:

\begin{itemize}\item{}A \emph{tight component} \(\phi _A \colon  FA \to  GA\) for each object \(A \in  \mathbb {A}\); 

\item{}A \emph{tight naturality square} 
\begin{center}\begin{tikzcd}
	FA && FA \\
	FB && GA \\
	GB && GB
	\arrow[""{name=0, anchor=center, inner sep=0}, "{ }"{marking, allow upside down}, "\shortmid"{marking}, equals, from=1-1, to=1-3]
	\arrow["Ff"', from=1-1, to=2-1]
	\arrow["{{\phi _A}}", from=1-3, to=2-3]
	\arrow["{{\phi _B}}"', from=2-1, to=3-1]
	\arrow["Gf", from=2-3, to=3-3]
	\arrow[""{name=1, anchor=center, inner sep=0}, "\shortmid"{marking}, equals, from=3-1, to=3-3]
	\arrow["{{\phi _f}}"{description}, color={rgb,255:red,92;green,92;blue,214}, draw=none, from=1, to=0]
\end{tikzcd},\end{center}
 which is an isomorphism in \(\mathbb{B}_1\) for each tight arrow \(f\colon  A \to  B\) in \(\mathbb {A}\), and 

\item{}A \emph{cell component} 
\begin{center}\begin {tikzcd}[cramped]
	{FX_1} & {FX_2} \\
	{GX_1} & {GX_2}
	\arrow [""{name=0, anchor=center, inner sep=0}, "FX", "\shortmid "{marking}, from=1-1, to=1-2]
	\arrow ["{\phi _{X_1}}"', from=1-1, to=2-1]
	\arrow ["{\phi _{X_2}}", from=1-2, to=2-2]
	\arrow [""{name=1, anchor=center, inner sep=0}, "GX"', "\shortmid "{marking}, from=2-1, to=2-2]
	\arrow ["{\phi _X}"{description}, draw=none, {text={rgb,255:red,92;green,92;blue,214}}, color={rgb,255:red,92;green,92;blue,214}, shorten <=4pt, shorten >=4pt, Rightarrow, from=1, to=0]
\end {tikzcd}\end{center}
 in \(\mathbb {B}\) for each loose map \(X_1 \overset {X}{\mathrel {\mkern 3mu\vcenter {\hbox {$\shortmid $}}\mkern -10mu{\to }}} X_2\) in \(\mathbb {A}\),\end{itemize} 

such that:

\begin{itemize}\item{}[Tight naturality]: The tight naturality squares are subject to the axioms of a pseudonatural transformation \(F_0 \to  G_0\),

\item{}[Unital coherence]: For all objects \(A \in  \mathbb {A}\), 
\begin{center}\adjustbox{max width=\textwidth}{\begin {tikzcd}[cramped]
	FA && FA && FA && FA && FA && FA \\
	FA && FA && GA & {=} & FA && GA && GA \\
	GA && GA && GA && GA && GA && GA
	\arrow [""{name=0, anchor=center, inner sep=0}, "\shortmid "{marking}, equals, from=1-1, to=1-3]
	\arrow [equals, from=1-1, to=2-1]
	\arrow [""{name=1, anchor=center, inner sep=0}, "\shortmid "{marking}, equals, from=1-3, to=1-5]
	\arrow [equals, from=1-3, to=2-3]
	\arrow ["{\phi _A}", from=1-5, to=2-5]
	\arrow [""{name=2, anchor=center, inner sep=0}, "\shortmid "{marking}, equals, from=1-7, to=1-9]
	\arrow [equals, from=1-7, to=2-7]
	\arrow [""{name=3, anchor=center, inner sep=0}, "\shortmid "{marking}, equals, from=1-9, to=1-11]
	\arrow ["{\phi _A}"', from=1-9, to=2-9]
	\arrow ["{\phi _A}", from=1-11, to=2-11]
	\arrow [""{name=4, anchor=center, inner sep=0}, "{F(e_A)}"', "\shortmid "{marking}, from=2-1, to=2-3]
	\arrow ["{\phi _A}"', from=2-1, to=3-1]
	\arrow ["{\phi _A}", from=2-3, to=3-3]
	\arrow [equals, from=2-5, to=3-5]
	\arrow ["{\phi _A}"', from=2-7, to=3-7]
	\arrow [""{name=5, anchor=center, inner sep=0}, "\shortmid "{marking}, equals, from=2-9, to=2-11]
	\arrow [equals, from=2-9, to=3-9]
	\arrow [equals, from=2-11, to=3-11]
	\arrow [""{name=6, anchor=center, inner sep=0}, "{G(e_A)}"', "\shortmid "{marking}, from=3-1, to=3-3]
	\arrow [""{name=7, anchor=center, inner sep=0}, "\shortmid "{marking}, equals, from=3-3, to=3-5]
	\arrow [""{name=8, anchor=center, inner sep=0}, "\shortmid "{marking}, equals, from=3-7, to=3-9]
	\arrow [""{name=9, anchor=center, inner sep=0}, "{G(e_A)}"', "\shortmid "{marking}, from=3-9, to=3-11]
	\arrow ["{(F_e)_A}"'{description}, draw=none, color={rgb,255:red,214;green,92;blue,92}, shorten <=4pt, shorten >=4pt, Rightarrow, from=4, to=0]
	\arrow ["{e_{\phi _A}}"', color={rgb,255:red,92;green,92;blue,214}, shorten <=4pt, shorten >=4pt, Rightarrow, from=5, to=3]
	\arrow ["{\phi _{e_A}}"'{description}, draw=none, color={rgb,255:red,92;green,92;blue,214}, shorten <=4pt, shorten >=9pt, Rightarrow, from=6, to=4]
	\arrow ["{\phi _{\mathrm {id}_A}}"'{description}, draw=none, color={rgb,255:red,92;green,92;blue,214}, shorten <=9pt, shorten >=9pt, Rightarrow, from=7, to=1]
	\arrow ["{\phi _{\mathrm {id}_A}}"'{description}, draw=none, color={rgb,255:red,92;green,92;blue,214}, shorten <=9pt, shorten >=9pt, Rightarrow, from=8, to=2]
	\arrow ["{(G_e)_A}"'{description}, draw=none, color={rgb,255:red,214;green,92;blue,92}, shorten <=4pt, shorten >=4pt, Rightarrow, from=9, to=5]
\end {tikzcd};}\end{center}

\item{}[Compositional coherence]: For all composable loose arrows \(X_1 \overset {X}{\mathrel {\mkern 3mu\vcenter {\hbox {$\shortmid $}}\mkern -10mu{\to }}} X_2 \overset {X'}{\mathrel {\mkern 3mu\vcenter {\hbox {$\shortmid $}}\mkern -10mu{\to }}} X_3\) in \(\mathbb {A}\), \\

\begin{center}
	\adjustbox{max width=\textwidth}{\begin{tikzcd}
	{FX_1} && {FX_1} & {FX_2} & {FX_3} && {FX_1} & {FX_2} & {FX_3} && {FX_3} \\
	{GX_1} && {FX_1} && FX3 & {=} & {GX_1} & {GX_2} & {GX_3} && {FX_3} \\
	{GX_1} && {GX_1} && {GX_3} && {GX_1} && {GX_3} && {GX_3}
	\arrow[""{name=0, anchor=center, inner sep=0}, "\shortmid"{marking}, equals, from=1-1, to=1-3]
	\arrow["{{\phi _{X_1}}}"', from=1-1, to=2-1]
	\arrow["FX"{inner sep=.8ex}, "\shortmid"{marking}, from=1-3, to=1-4]
	\arrow[equals, from=1-3, to=2-3]
	\arrow["{FX'}"{inner sep=.8ex}, "\shortmid"{marking}, from=1-4, to=1-5]
	\arrow[equals, from=1-5, to=2-5]
	\arrow[""{name=1, anchor=center, inner sep=0}, "{FX }"{inner sep=.8ex}, "\shortmid"{marking}, from=1-7, to=1-8]
	\arrow["{{\phi _{X_1}}}"', from=1-7, to=2-7]
	\arrow[""{name=2, anchor=center, inner sep=0}, "{FX'}"{inner sep=.8ex}, "\shortmid"{marking}, from=1-8, to=1-9]
	\arrow["{{\phi _{X_2}}}"', from=1-8, to=2-8]
	\arrow[""{name=3, anchor=center, inner sep=0}, "\shortmid"{marking}, equals, from=1-9, to=1-11]
	\arrow["{{\phi _{X_3}}}", from=1-9, to=2-9]
	\arrow[equals, from=1-11, to=2-11]
	\arrow[equals, from=2-1, to=3-1]
	\arrow[""{name=4, anchor=center, inner sep=0}, "{F(X \odot X')}"{inner sep=.8ex}, "\shortmid"{marking}, from=2-3, to=2-5]
	\arrow["{{\phi _{X_1}}}"', from=2-3, to=3-3]
	\arrow["{{\phi _{X_3}}}", from=2-5, to=3-5]
	\arrow[""{name=5, anchor=center, inner sep=0}, "GX"'{inner sep=.8ex}, "\shortmid"{marking}, from=2-7, to=2-8]
	\arrow[equals, from=2-7, to=3-7]
	\arrow[""{name=6, anchor=center, inner sep=0}, "{GX'}"'{inner sep=.8ex}, "\shortmid"{marking}, from=2-8, to=2-9]
	\arrow[equals, from=2-9, to=3-9]
	\arrow["{{\phi _{X_3}}}", from=2-11, to=3-11]
	\arrow[""{name=7, anchor=center, inner sep=0}, "\shortmid"{marking}, equals, from=3-1, to=3-3]
	\arrow[""{name=8, anchor=center, inner sep=0}, "{G(X \odot X')}"'{inner sep=.8ex}, "\shortmid"{marking}, from=3-3, to=3-5]
	\arrow[""{name=9, anchor=center, inner sep=0}, "{G(X \odot X')}"'{inner sep=.8ex}, "\shortmid"{marking}, from=3-7, to=3-9]
	\arrow[""{name=10, anchor=center, inner sep=0}, "\shortmid"{marking}, equals, from=3-9, to=3-11]
	\arrow["{{(F_{\odot })_{X, X'}}}"{pos=0.8}{description}, draw=none, color={rgb,255:red,214;green,92;blue,92}, between={0.5}{1}, Rightarrow, from=4, to=1-4]
	\arrow["{{\phi _X}}"{description}, draw=none, color={rgb,255:red,92;green,92;blue,214}, between={0.2}{0.8}, Rightarrow, from=5, to=1]
	\arrow["{{\phi _{X'}}}"'{description}, draw=none, color={rgb,255:red,92;green,92;blue,214}, between={0.2}{0.8}, Rightarrow, from=6, to=2]
	\arrow["{{\phi _{\mathrm {id}_{X_1}}}}"{description}, draw=none, color={rgb,255:red,92;green,92;blue,214}, between={0.2}{0.8}, Rightarrow, from=7, to=0]
	\arrow["{{\phi _{X\odot  X'}}}"{description}, draw=none, color={rgb,255:red,92;green,92;blue,214}, between={0.2}{0.8}, Rightarrow, from=8, to=4]
	\arrow["{{(G_{\odot })_{X, X'}}}"'{description}, draw=none, color={rgb,255:red,214;green,92;blue,92}, between={0.2}{1}, Rightarrow, from=9, to=2-8]
	\arrow["{{\phi _{\mathrm {id}_{X_3}}}}"'{description}, draw=none, color={rgb,255:red,92;green,92;blue,214}, between={0.2}{0.8}, Rightarrow, from=10, to=3]
\end{tikzcd};} \end{center}

\item{}[Cell naturality]: For every square 
$\begin {tikzcd}[cramped]
	{X_1} && {X_2} \\
	{Y_1} && {Y_2}
	\arrow [""{name=0, anchor=center, inner sep=0}, "X", "\shortmid "{marking}, from=1-1, to=1-3]
	\arrow ["{f_1}"', from=1-1, to=2-1]
	\arrow ["{f_2}", from=1-3, to=2-3]
	\arrow [""{name=1, anchor=center, inner sep=0}, "Y"', "\shortmid "{marking}, from=2-1, to=2-3]
	\arrow ["\alpha "{description}, draw=none, from=0, to=1]
\end {tikzcd}$
 in \(\mathbb {A}\), 
\begin{center}\adjustbox{max width=\textwidth}{\begin {tikzcd}[cramped]
	{FX_1} && {FX_2} && {FX_2} && {FX_1} && {FX_1} && {FX_2} \\
	{FY_1} && {FY_2} && {GX_2} & {=} & {FY_1} && {GX_1} && {GX_2} \\
	{GY_1} && {GY_2} && {GY_2} && {GY_1} && {GY_1} && {GY_2}
	\arrow [""{name=0, anchor=center, inner sep=0}, "FX", "\shortmid "{marking}, from=1-1, to=1-3]
	\arrow ["{Ff_1}"', from=1-1, to=2-1]
	\arrow [""{name=1, anchor=center, inner sep=0}, "\shortmid "{marking}, equals, from=1-3, to=1-5]
	\arrow ["{Ff_2}", from=1-3, to=2-3]
	\arrow ["{\phi _{X_2}}", from=1-5, to=2-5]
	\arrow [""{name=2, anchor=center, inner sep=0}, "\shortmid "{marking}, equals, from=1-7, to=1-9]
	\arrow ["{Ff_1}"', from=1-7, to=2-7]
	\arrow [""{name=3, anchor=center, inner sep=0}, "FX", "\shortmid "{marking}, from=1-9, to=1-11]
	\arrow ["{\phi _{X_1}}"', from=1-9, to=2-9]
	\arrow ["{\phi _{X_2}}", from=1-11, to=2-11]
	\arrow [""{name=4, anchor=center, inner sep=0}, "FY"', "\shortmid "{marking}, from=2-1, to=2-3]
	\arrow ["{\phi _{Y_1}}"', from=2-1, to=3-1]
	\arrow ["{\phi _{Y_2}}", from=2-3, to=3-3]
	\arrow ["{Gf_2}", from=2-5, to=3-5]
	\arrow ["{\phi _{Y_1}}"', from=2-7, to=3-7]
	\arrow [""{name=5, anchor=center, inner sep=0}, "GX"', "\shortmid "{marking}, from=2-9, to=2-11]
	\arrow ["{Gf_1}"', from=2-9, to=3-9]
	\arrow ["{Gf_2}", from=2-11, to=3-11]
	\arrow [""{name=6, anchor=center, inner sep=0}, "GY"', "\shortmid "{marking}, from=3-1, to=3-3]
	\arrow [""{name=7, anchor=center, inner sep=0}, "\shortmid "{marking}, equals, from=3-3, to=3-5]
	\arrow [""{name=8, anchor=center, inner sep=0}, "\shortmid "{marking}, equals, from=3-7, to=3-9]
	\arrow [""{name=9, anchor=center, inner sep=0}, "GY"', "\shortmid "{marking}, from=3-9, to=3-11]
	\arrow ["{F\alpha }"{description}, draw=none, color={rgb,255:red,214;green,92;blue,92}, shorten <=4pt, shorten >=4pt, Rightarrow, from=4, to=0]
	\arrow ["{\phi _X}"'{description}, draw=none, color={rgb,255:red,92;green,92;blue,214}, shorten <=4pt, shorten >=4pt, Rightarrow, from=5, to=3]
	\arrow ["{\phi _Y}"{description}, draw=none, color={rgb,255:red,92;green,92;blue,214}, shorten <=4pt, shorten >=6pt, Rightarrow, from=6, to=4]
	\arrow ["{\phi _{f_2}}"{description}, draw=none, color={rgb,255:red,92;green,92;blue,214}, shorten <=9pt, shorten >=9pt, Rightarrow, from=7, to=1]
	\arrow ["{\phi _{f_1}}"'{description}, draw=none, color={rgb,255:red,92;green,92;blue,214}, shorten <=9pt, shorten >=9pt, Rightarrow, from=8, to=2]
	\arrow ["{G\alpha }"'{description}, draw=none, color={rgb,255:red,214;green,92;blue,92}, shorten <=4pt, shorten >=6pt, Rightarrow, from=9, to=5]
\end {tikzcd}.}\end{center}\end{itemize}

A \emph{tight transformation} is a tight pseudotransformation whose tight naturality squares form a strict 2-natural transformation. 
\end{definition}

Weak double categories, pseudo double functors, and tight transformations form a cartesian 2-category $\Dbl$, and so we can talk about pseudomonoids in it.

\begin{definition}[{Monoidal double categories, c.f.  \cite{ShulmanBicategories}}]

A monoidal double category \(\mathbb {D}\) is a pseudomonoid in the 2-category $\Dbl$.\end{definition}

This amounts to \(\mathbb {D}_0\) and \(\mathbb {D}_1\) being monoidal categories, the source and target functors \(\mathbb {D}_1 \to  \mathbb {D}_0\) being strict monoidal,
and both the identity assignment map \(e \colon  \mathbb {D}_0 \to  \mathbb {D}_1\) and internal composition \(\odot  \colon  \mathbb {D}_1 \times  \mathbb {D}_1 \to  \mathbb {D}_1\)
preserving the monoidal unit and tensor product. Moreover, the unit and associativity isomorphisms for \(\otimes \) form tight transformations. Similarly, symmetric pseudomonoids in $\Dbl$ are \emph{symmetric monoidal double categories}.\\

Accordingly, by a lax symmetric monoidal pseudo double functor \(Q \colon  \mathbb {C} \to  \mathbb {D}\), we mean a lax morphism of symmetric pseudomonoids in $\Dbl$: a pseudo double functor
 \(Q\) equipped with tight transformations \(\mu  \colon  Q \otimes  Q \to  Q(-\otimes  -)\) and  \(I_{\mathbb {D}} \to  Q(I_{\mathbb {C}})\) 
between monoidal units satisfying the usual axioms for a symmetric monoidal functor.\\

There are two important types of double categories for the purposes of this paper:

\begin{example}[{Spans}]
\par{}Given an adequate triple $\mathfrak{C}$, we can construct its \emph{double category of spans} \(\mathbb {S}\mathsf {pan}(\mathfrak{C})\): its
underlying tight category is just \(\mathsf {C}\), loose arrows \(X_1 \overset {X}{\mathrel {\mkern 3mu\vcenter {\hbox {$\shortmid $}}\mkern -10mu{\to }}} X_2\) are $\mathfrak{C}$-spans \(X_1 \xleftarrow {x_1} X \xrightarrow {x_2} X_2\) (which
compose by pullback), and squares are morphisms of spans (see \cite{dawson-2010-span}). Its opposite double category will play a central role in
this paper. The span construction can be extended to a cartesian 2-functor $\mathcal{A}{dq} \to \Dbl$ (c.f. Lemma 2.14 in \cite{libkind-2025-towards}), and therefore if $\mathfrak{C}$ is a symmetric monoidal adequate triple, then $\mathbb{S}\mathsf{pan}(\mathfrak{C})$ is  a symmetric monoidal double category. A characterisation for spans on trivial adequate triples was provided in \cite{Evangelia}, which also discusses the relationship between spans and the Beck-Chevalley condition.
\end{example}

\begin{remark}
   Composition of $\mathfrak{C}$-spans is  defined by a universal property, and therefore is only weakly associative and unital. We can, however, take $\Span{\mathfrak{C}}$ to be an \emph{unitary double category}: one for which the double unitors \(\lambda _X\) and \(\rho _X\) are identities. This is the case since \(\mathfrak{C}^l\) and \(\mathfrak{C}^r\) contain identities, so we can always make adequate choices of pullbacks when composing $\mathfrak{C}$-spans.  We will assume that is the case throughout this paper. 
\end{remark}

\begin{example}[Quintets]
    For any 2-category \(\mathcal {K}\), Ehresmann's \cite{Ehresmann} \emph{double category of quintets 
over \(\mathcal {K}\)} is the double category \(\mathbb {Q}\mathsf {t}(\mathcal {K})\) whose tight and loose arrows are the \(1\)-morphisms of \(\mathcal {K}\), which compose as in \(\mathcal {K}\), and whose squares 

\begin{center}\begin{tikzcd}
	A & B \\
	D & C
	\arrow[""{name=0, anchor=center, inner sep=0}, "k"{inner sep=.8ex}, "\shortmid"{marking}, from=1-1, to=1-2]
	\arrow["f"', from=1-1, to=2-1]
	\arrow["{{f'}}", from=1-2, to=2-2]
	\arrow[""{name=1, anchor=center, inner sep=0}, "{k'}"'{inner sep=.8ex}, "\shortmid"{marking}, from=2-1, to=2-2]
	\arrow["{\alpha }"{description}, color={rgb,255:red,92;green,92;blue,214}, draw=none, from=1, to=0]
\end{tikzcd}\end{center}
 are 2-cells \(\alpha  \colon  k'\circ  f \Rightarrow  f' \circ  k\) in \(\mathcal {K}\) going \emph{up the page}, which compose as in $\mathcal{K}$. 
Note that \(\mathbb {Q}\mathsf {t}(\mathcal {K})\) is a symmetric monoidal double category whenever \(\mathcal {K}\) is a symmetric monoidal 2-category, since $\Qt{-} \colon \TwoCat \to \Dbl$ is clearly a cartesian 2-functor.\end{example}

\par{}In order to speak of hyperdoctrines in double-categorical language, we must be able to incorporate adjunctions. For double categories, the
analogous statement of the triangular identities is given through the notion of \emph{conjoint}, but these also come with a dual notion of \emph{companion}:
\begin{definition}[{Companions and conjoints}]
\par{}Let \(\mathbb {D}\) be a double category and \(f \colon  A \to  B\) be a tight morphism in \(\mathbb {D}\).
A \emph{companion} for \(f\) is a loose morphism \(f^*\colon  A \overset {}{\mathrel {\mkern 3mu\vcenter {\hbox {$\shortmid $}}\mkern -10mu{\to }}} B\), together with squares 
\begin{equation*}
    \begin{tikzcd}
	A & B & A & A \\
	B & B & A & B
	\arrow[""{name=0, anchor=center, inner sep=0}, "{{f^*}}"{inner sep=.8ex}, "\shortmid"{marking}, from=1-1, to=1-2]
	\arrow["f"', from=1-1, to=2-1]
	\arrow[equals, from=1-2, to=2-2]
	\arrow[""{name=1, anchor=center, inner sep=0}, "\shortmid"{marking}, equals, from=1-3, to=1-4]
	\arrow["f", from=1-4, to=2-4]
	\arrow[""{name=2, anchor=center, inner sep=0}, "\shortmid"{marking}, equals, from=2-1, to=2-2]
	\arrow[equals, from=2-3, to=1-3]
	\arrow[""{name=3, anchor=center, inner sep=0}, "{f^*}"'{inner sep=.8ex}, "\shortmid"{marking}, from=2-3, to=2-4]
	\arrow["{\phi_f}"{description}, color={rgb,255:red,92;green,92;blue,214}, draw=none, from=2, to=0]
	\arrow["{\psi_f}"{description}, color={rgb,255:red,92;green,92;blue,214}, draw=none, from=3, to=1]
\end{tikzcd}
\end{equation*}
 such that 
\begin{center}\adjustbox{max width=\textwidth}{\begin{tikzcd}
	A && B && A && A && B \\
	&&& {=} \\
	A && B && A && B && B
	\arrow[""{name=0, anchor=center, inner sep=0}, "{f^*}"{inner sep=.8ex}, "\shortmid"{marking}, from=1-1, to=1-3]
	\arrow[equals, from=1-1, to=3-1]
	\arrow[equals, from=1-3, to=3-3]
	\arrow[""{name=1, anchor=center, inner sep=0}, "\shortmid"{marking}, equals, from=1-5, to=1-7]
	\arrow[equals, from=1-5, to=3-5]
	\arrow[""{name=2, anchor=center, inner sep=0}, "{f^*}"{inner sep=.8ex}, "\shortmid"{marking}, from=1-7, to=1-9]
	\arrow["f"', from=1-7, to=3-7]
	\arrow[equals, from=1-9, to=3-9]
	\arrow[""{name=3, anchor=center, inner sep=0}, "{f^*}"'{inner sep=.8ex}, "\shortmid"{marking}, from=3-1, to=3-3]
	\arrow[""{name=4, anchor=center, inner sep=0}, "{f^*}"'{inner sep=.8ex}, "\shortmid"{marking}, from=3-5, to=3-7]
	\arrow[""{name=5, anchor=center, inner sep=0}, "\shortmid"{marking}, equals, from=3-7, to=3-9]
	\arrow["{{v_{f^*}}}"{description}, color={rgb,255:red,92;green,92;blue,214}, draw=none, from=3, to=0]
	\arrow["{\psi_f}"{description}, color={rgb,255:red,92;green,92;blue,214}, draw=none, from=4, to=1]
	\arrow["{\phi_f}"{description}, color={rgb,255:red,92;green,92;blue,214}, draw=none, from=5, to=2]
\end{tikzcd}}\end{center}
 and 
\begin{center}\adjustbox{max width=\textwidth}{\begin{tikzcd}
	A && A && A & A \\
	&&& {=} & A & B \\
	B && B && B & B
	\arrow[""{name=0, anchor=center, inner sep=0}, "\shortmid"{marking}, equals, from=1-1, to=1-3]
	\arrow["f"', from=1-1, to=3-1]
	\arrow["f", from=1-3, to=3-3]
	\arrow[""{name=1, anchor=center, inner sep=0}, "\shortmid"{marking}, equals, from=1-5, to=1-6]
	\arrow[equals, from=1-5, to=2-5]
	\arrow["f", from=1-6, to=2-6]
	\arrow[""{name=2, anchor=center, inner sep=0}, "{f^*}"'{inner sep=.8ex}, "\shortmid"{marking}, from=2-5, to=2-6]
	\arrow["f"', from=2-5, to=3-5]
	\arrow[equals, from=2-6, to=3-6]
	\arrow[""{name=3, anchor=center, inner sep=0}, "\shortmid"{marking}, equals, from=3-1, to=3-3]
	\arrow[""{name=4, anchor=center, inner sep=0}, "\shortmid"{marking}, equals, from=3-5, to=3-6]
	\arrow["{\psi_f}"{description}, color={rgb,255:red,92;green,92;blue,214}, draw=none, from=2, to=1]
	\arrow["{{e_f}}"{description}, color={rgb,255:red,92;green,92;blue,214}, draw=none, from=3, to=0]
	\arrow["{\phi_f}"{description}, color={rgb,255:red,92;green,92;blue,214}, draw=none, from=4, to=2]
\end{tikzcd}}.\end{center}

\par{}Similarly, a conjoint for \(f\) is a loose map \(B \overset {f_! }{\mathrel {\mkern 3mu\vcenter {\hbox {$\shortmid $}}\mkern -10mu{\to }}} A\) together with
squares 
\begin{center}\begin{tikzcd}
	B & A & A & A \\
	B & B & B & A
	\arrow[""{name=0, anchor=center, inner sep=0}, "\shortmid"{marking}, from=1-1, to=1-2]
	\arrow[equals, from=1-1, to=2-1]
	\arrow["f", from=1-2, to=2-2]
	\arrow[""{name=1, anchor=center, inner sep=0}, "\shortmid"{marking}, equals, from=1-3, to=1-4]
	\arrow["f"', from=1-3, to=2-3]
	\arrow[equals, from=1-4, to=2-4]
	\arrow[""{name=2, anchor=center, inner sep=0}, "\shortmid"{marking}, equals, from=2-1, to=2-2]
	\arrow[""{name=3, anchor=center, inner sep=0}, "{f_!}"'{inner sep=.8ex}, "\shortmid"{marking}, from=2-3, to=2-4]
	\arrow["{{\hat{\eta}_f}}"{description}, color={rgb,255:red,92;green,92;blue,214}, draw=none, from=2, to=0]
	\arrow["{{\hat{\epsilon}_f}}"{description}, color={rgb,255:red,92;green,92;blue,214}, draw=none, from=3, to=1]
\end{tikzcd}\end{center}
 in \(\mathbb {D}\) such that

\begin{center}\begin{tikzcd}
	B && A && B && A && A \\
	&&& {=} \\
	B && A && B && B && A
	\arrow[""{name=0, anchor=center, inner sep=0}, "{f_!}"{inner sep=.8ex}, "\shortmid"{marking}, from=1-1, to=1-3]
	\arrow[equals, from=1-1, to=3-1]
	\arrow[equals, from=1-3, to=3-3]
	\arrow[""{name=1, anchor=center, inner sep=0}, "{ }"{marking, allow upside down}, "\shortmid"{marking}, from=1-5, to=1-7]
	\arrow[equals, from=1-5, to=3-5]
	\arrow[""{name=2, anchor=center, inner sep=0}, "\shortmid"{marking}, equals, from=1-7, to=1-9]
	\arrow["f"', from=1-7, to=3-7]
	\arrow[equals, from=1-9, to=3-9]
	\arrow[""{name=3, anchor=center, inner sep=0}, "{f_!}"'{inner sep=.8ex}, "\shortmid"{marking}, from=3-1, to=3-3]
	\arrow[""{name=4, anchor=center, inner sep=0}, "\shortmid"{marking}, equals, from=3-5, to=3-7]
	\arrow[""{name=5, anchor=center, inner sep=0}, "{f_!}"'{inner sep=.8ex}, "\shortmid"{marking}, from=3-7, to=3-9]
	\arrow["{{v_{f_! }}}"{description}, color={rgb,255:red,92;green,92;blue,214}, draw=none, from=3, to=0]
	\arrow["{{\hat{\eta}_f}}"{description}, color={rgb,255:red,92;green,92;blue,214}, draw=none, from=4, to=1]
	\arrow["{\hat{\epsilon}_f}"{description}, color={rgb,255:red,92;green,92;blue,214}, draw=none, from=5, to=2]
\end{tikzcd}\end{center}
 and 
\begin{center}\begin{tikzcd}
	A && A && A && A \\
	&&& {=} & B && A \\
	B && B && B && B
	\arrow[""{name=0, anchor=center, inner sep=0}, "\shortmid"{marking}, equals, from=1-1, to=1-3]
	\arrow["f"', from=1-1, to=3-1]
	\arrow["f", from=1-3, to=3-3]
	\arrow[""{name=1, anchor=center, inner sep=0}, "{\shortmid }"{marking, allow upside down}, equals, from=1-5, to=1-7]
	\arrow["f"', from=1-5, to=2-5]
	\arrow[equals, from=1-7, to=2-7]
	\arrow[""{name=2, anchor=center, inner sep=0}, "{f_! }"{inner sep=.8ex}, "\shortmid"{marking}, from=2-5, to=2-7]
	\arrow[equals, from=2-5, to=3-5]
	\arrow["f", from=2-7, to=3-7]
	\arrow[""{name=3, anchor=center, inner sep=0}, "\shortmid"{marking}, equals, from=3-1, to=3-3]
	\arrow[""{name=4, anchor=center, inner sep=0}, "\shortmid"{marking}, equals, from=3-5, to=3-7]
	\arrow["{{\hat{\epsilon}_f}}"{description}, color={rgb,255:red,92;green,92;blue,214}, draw=none, from=2, to=1]
	\arrow["{{e_f}}"{description}, color={rgb,255:red,92;green,92;blue,214}, draw=none, from=3, to=0]
	\arrow["{\hat{\eta}_f}"{description}, color={rgb,255:red,92;green,92;blue,214}, draw=none, from=4, to=2]
\end{tikzcd}.\end{center}\end{definition}

\begin{remark}[{}]
\par{}If \(\mathcal {K}\) is a 2-category, then a conjoint pair in \(\mathbb {Q}\mathsf {t}(\mathcal {K})\) is an
adjunction internal to \(\mathcal {K}\). In there, every tight morphism has a companion (namely itself, seen as a loose morphism). \end{remark}

Recall that a strong morphism of existential structures is a 2-natural transformation that preserves existential quantification, which is to say that the mate of its component at any quantifiable morphism is invertible.  Our consideration of transformations with this property and of Frob\"enius reciprocity  lead to the notion of \emph{companion commuter} in the sense of Paré  \cite{pare-2024-retrocells}:
\begin{definition}
\par{}A square 
\begin{center}\begin {tikzcd}[cramped]
	{Y_1} && {Y_2} \\
	\\
	{X_1} && {X_2}
	\arrow[""{name=0, anchor=center, inner sep=0}, "Y"{inner sep=.8ex}, "\shortmid "{marking}, from=1-1, to=1-3]
	\arrow["{f_1}"', from=1-1, to=3-1]
	\arrow["{f_2}", from=1-3, to=3-3]
	\arrow[""{name=1, anchor=center, inner sep=0}, "X"'{inner sep=.8ex}, "\shortmid "{marking}, from=3-1, to=3-3]
	\arrow["\alpha "'{description}, draw=none, color={rgb,255:red,92;green,92;blue,214}, shorten <=10pt, shorten >=10pt, Rightarrow, from=1, to=0]
\end {tikzcd}\end{center}
 in a double category \(\mathbb {D}\) is a \emph{companion commuter} if \(f_1\) and \(f_2\) have companions and
its associated transpose square 

\begin{center}\begin{tikzcd}
	{Y_1} && {Y_1} && {Y_2} && {X_2} \\
	\\
	{Y_1} && {X_1} && {X_2} && {X_2}
	\arrow[""{name=0, anchor=center, inner sep=0}, "\shortmid"{marking}, equals, from=1-1, to=1-3]
	\arrow[equals, from=1-1, to=3-1]
	\arrow[""{name=1, anchor=center, inner sep=0}, "Y"{inner sep=.8ex}, "\shortmid"{marking}, from=1-3, to=1-5]
	\arrow["{{f_1}}"', from=1-3, to=3-3]
	\arrow[""{name=2, anchor=center, inner sep=0}, "{f_2^*}"{inner sep=.8ex}, "\shortmid"{marking}, from=1-5, to=1-7]
	\arrow["{{f_2}}", from=1-5, to=3-5]
	\arrow[equals, from=1-7, to=3-7]
	\arrow[""{name=3, anchor=center, inner sep=0}, "{f_1^*}"'{inner sep=.8ex}, "\shortmid"{marking}, from=3-1, to=3-3]
	\arrow[""{name=4, anchor=center, inner sep=0}, "X"'{inner sep=.8ex}, "\shortmid"{marking}, from=3-3, to=3-5]
	\arrow[""{name=5, anchor=center, inner sep=0}, "\shortmid"{marking}, equals, from=3-5, to=3-7]
	\arrow["{\psi_{f_1}}"{description}, color={rgb,255:red,92;green,92;blue,214}, draw=none, from=3, to=0]
	\arrow["{\alpha }"'{description}, draw=none, color={rgb,255:red,153;green,92;blue,214}, between={0.25}{0.75}, Rightarrow, from=4, to=1]
	\arrow["{\phi_{f_2}}"{description}, color={rgb,255:red,92;green,92;blue,214}, draw=none, from=5, to=2]
\end{tikzcd}\end{center}
 is an isomorphism in $\mathbb{D}_1$. The dual notion is that of a \emph{conjoint commuter}.  By a \emph{companion commuter transformation} 
we mean a tight transformation whose cell components are companion commuter squares.
\end{definition}

We aim to show that, under mild conditions, \(\mathfrak{C}\)-regular $\mathcal{K}$-hyperdoctrines correspond to lax symmetric monoidal pseudo double functors
for which the monoidal laxators are companion commuter transformations. We will split the proof of this correspondence into two sections:
in Section~\ref{section:DoctrinesAsDoublePseudofunctors}, we show that if $\mathfrak{C}$ is cartesian, then the underlying 2-functor of a \(\mathfrak{C}\)-regular $\mathcal{K}$-hyperdoctrine can be extended to
such a pseudo double functor \(\mathbb {S}\mathsf {pan}(\mathfrak{C})^{\op} \to  \mathbb {Q}\mathsf {t}(\mathcal {K})\). Section~\ref{section:DoublePseudofunctorsAsDoctrines} is then dedicated to proving a converse: if the class $\mathsf{C}^l$ also has all diagonals, then the tight component of such a map yields a $\mathfrak{C}$-regular hyperdoctrine. 

\section{Hyperdoctrines as pseudo double functors}
\label{section:DoctrinesAsDoublePseudofunctors}

 We divide the task into two: first we argue that \emph{existential symmetric monoidal structures} can be pseudofunctorially extended to lax symmetric monoidal  pseudo double functors, and then show that their laxators have the companion commuter property. For the first goal, the main technical lemma is the following:

\begin{lemma}
\label{lemma:BulletConstruction}
    There is a cartesian 2-functor $(-)^{\bullet} \colon \ExistStr_{\mathrm{w}} \to \Ar^{\colax}(\mathrm{Dbl})$, where $\mathrm{Dbl}$ is the cartesian 2-category of (weak) double categories, pseudo double functors, and tight transformations.
\end{lemma}

We will describe the construction of $(-)^{\bullet}$ here, but leave the proof of Lemma~\ref{lemma:BulletConstruction} to the Appendix. We will sometimes explicitly include the compositors of 2-functors and the naturality 2-cells of 2-natural transformations for clarity, even though they are identities.

\begin{itemize}
    \item An object $(\mathfrak{C},\mathsf{C}^{op} \xrightarrow{P} \mathcal{K})$ is mapped to the pseudo double functor $P^{\bullet} \colon \Span{\mathfrak{C}}^{\op} \to \Qt{\mathcal{K}}$ whose tight component is just $P$, and that maps a $\mathfrak{C}$-span $X_1 \xleftarrow{x_1} X \xrightarrow{x_2} X_2$ to the composite $\exists^P x_2 \circ Px_1$ (as a loose arrow in $\Qt{\mathcal{K}}$) and a square 
    \begin {tikzcd}[cramped]
	{Y_1} && {Y_2} \\
	\\
	{X_1} && {X_2}
	\arrow ["Y", "\shortmid "{marking}, from=1-1, to=1-3]
	\arrow [""{name=0, anchor=center, inner sep=0}, "{f_1}"', from=1-1, to=3-1]
	\arrow [""{name=1, anchor=center, inner sep=0}, "{f_2}", from=1-3, to=3-3]
	\arrow ["X"', "\shortmid "{marking}, from=3-1, to=3-3]
	\arrow ["\alpha "{description}, color={rgb,255:red,92;green,92;blue,214}, draw=none, from=0, to=1]
\end {tikzcd}
 in  \(\mathbb {S}\mathsf {pan}(\mathfrak{C})^{\mathsf {op}}\) to the square in $\Qt{\mathcal{K}}$ corresponding to  \adjustbox{max width=\textwidth}{\begin{tikzcd}[cramped]
	{PY_1} & PY & {PY_2} & {PX_2} & {PX_2} \\
	{PY_1} & PY & PY & PX & {PX_2} \\
	{PX_1} && PX && {PX_2}
	\arrow ["{P(y_1)}", from=1-1, to=1-2]
	\arrow [""{name=0, anchor=center, inner sep=0},  equal, from=1-1, to=2-1]
	\arrow [""{name=1, anchor=center, inner sep=0}, "{\exists  y_2}", from=1-2, to=1-3]
	\arrow [""{name=2, anchor=center, inner sep=0}, equal, from=1-2, to=2-2]
	\arrow ["{P(f_2)}", from=1-3, to=1-4]
	\arrow [""{name=3, anchor=center, inner sep=0}, "{P(y_2)}", from=1-3, to=2-3]
	\arrow [""{name=4, anchor=center, inner sep=0}, equal, from=1-4, to=1-5]
	\arrow [""{name=5, anchor=center, inner sep=0}, "{P(x_2)}", from=1-4, to=2-4]
	\arrow [equal, from=1-5, to=2-5]
	\arrow ["{P(y_1)}", from=2-1, to=2-2]
	\arrow [""{name=6, anchor=center, inner sep=0}, "{P(f_1)}"', from=2-1, to=3-1]
	\arrow [""{name=7, anchor=center, inner sep=0}, equal, from=2-2, to=2-3]
	\arrow ["{P(\alpha )}"', from=2-3, to=2-4]
	\arrow [""{name=8, anchor=center, inner sep=0}, "{P(\alpha )}"', from=2-3, to=3-3]
	\arrow [""{name=9, anchor=center, inner sep=0}, "{\exists  x_2}"', from=2-4, to=2-5]
	\arrow [""{name=10, anchor=center, inner sep=0}, equal, from=2-5, to=3-5]
	\arrow ["{P(x_1)}"', from=3-1, to=3-3]
	\arrow ["{\exists  x_2}"', from=3-3, to=3-5]
	\arrow ["\circlearrowleft "{description}, color={rgb,255:red,92;green,92;blue,214}, draw=none, from=0, to=2]
	\arrow ["\circlearrowleft "{description}, xshift=1ex, color={rgb,255:red,92;green,92;blue,214}, draw=none, from=3, to=5]
	\arrow ["\circlearrowleft"{description}, color={rgb,255:red,92;green,92;blue,214}, draw=none, from=6, to=8]
	\arrow ["{\eta ^{y_2}}"', color={rgb,255:red,92;green,92;blue,214}, shorten <=4pt, shorten >=4pt, Rightarrow, from=7, to=1]
	\arrow ["\circlearrowleft "{description}, color={rgb,255:red,92;green,92;blue,214}, draw=none, from=8, to=10]
	\arrow ["{\epsilon ^{x_2}}"', color={rgb,255:red,92;green,92;blue,214}, shorten <=4pt, shorten >=4pt, Rightarrow, from=9, to=4]
\end{tikzcd}} in $\mathcal{K}$.\\

\noindent The component of the natural isomorphism  \(P^{\bullet }_{\odot }\) at a composite $\mathfrak{C}$-span  \begin{equation*}
    \begin{tikzcd}
	&& {X\odot X'} && \\
	& X && {X'} \\
	{X_1} && {X_2} && {X_3}
	\arrow["\chi"', from=1-3, to=2-2]
	\arrow["{\chi'}", from=1-3, to=2-4]
	\arrow["\lrcorner"{anchor=center, pos=0.125, rotate=-45}, draw=none, from=1-3, to=3-3]
	\arrow["{x_1}"', from=2-2, to=3-1]
	\arrow["{x_2}"', from=2-2, to=3-3]
	\arrow["{x_2'}", from=2-4, to=3-3]
	\arrow["{x_3'}", from=2-4, to=3-5]
\end{tikzcd}
\end{equation*} is given by the map \(P^{\bullet }(X \odot  X')  \Rightarrow  P^{\bullet }(X) \odot  P^{\bullet }(X')\) corresponding to the pasting diagram
\begin{center}\adjustbox{max width=\textwidth}{
\begin{tikzcd}
	{PX_1} && {P(X\odot  X')} && {PX_3} \\
	{PX_1} & PX & {P(X\odot  X')} & {PX'} & {PX_3} \\
	{PX_1} & PX & {PX_2} & {PX'} & {PX_3}
	\arrow[""{name=0, anchor=center, inner sep=0}, "{P(x_1 \chi)}", from=1-1, to=1-3]
	\arrow[equals, from=1-1, to=2-1]
	\arrow[""{name=1, anchor=center, inner sep=0}, "{\exists^P(x_3'\chi')}", from=1-3, to=1-5]
	\arrow[equals, from=1-3, to=2-3]
	\arrow[equals, from=1-5, to=2-5]
	\arrow["{Px_1}"', from=2-1, to=2-2]
	\arrow[equals, from=2-1, to=3-1]
	\arrow["{P \chi}"', from=2-2, to=2-3]
	\arrow[equals, from=2-2, to=3-2]
	\arrow["{\exists^P \chi'}"', from=2-3, to=2-4]
	\arrow["{\exists^P x_3'}"', from=2-4, to=2-5]
	\arrow[equals, from=2-4, to=3-4]
	\arrow[equals, from=2-5, to=3-5]
	\arrow["{Px_1}"', from=3-1, to=3-2]
	\arrow["{\exists^P x_2}"', from=3-2, to=3-3]
	\arrow["{\mathcal{B}_{x_2, x_2'}^{\chi, \chi'}}", color={rgb,255:red,153;green,92;blue,214}, Rightarrow, from=3-3, to=2-3]
	\arrow["{Px_2'}"', from=3-3, to=3-4]
	\arrow["{\exists^P x_3'}"', from=3-4, to=3-5]
	\arrow["{\gamma^P_{x_1, \chi~}}"', color={rgb,255:red,92;green,92;blue,214}, between={0}{0.8}, Rightarrow, from=2-2, to=0]
	\arrow["{\gamma^{\exists^P}_{\chi', x_3'}}", color={rgb,255:red,92;green,92;blue,214}, between={0}{0.8}, Rightarrow, from=2-4, to=1]
\end{tikzcd}},\end{center}
where $\mathcal{B}_{x_2, x_2'}^{\chi, \chi'}$ is the Beck-Chevalley 2-cell for the $\mathfrak{C}$-pullback defining $X \odot X'$. This exists by definition of $\mathfrak{C}$-existential $\mathcal{K}$-structure.\\

The unitor $P^{\bullet}_e \colon e \circ P \to P_1^{\bullet} \circ e$ is the natural isomorphism whose component at a $\mathfrak{C}$-span $X_1 \xleftarrow{x_1} X \xrightarrow{x_2} X_2$ is the counit $\epsilon^{\id_X}$ of the adjunction $\exists^P(\id_X) \dashv P(\id_X)$.\\

\item A 1-morphism $(F, \rho, L) \colon P_1 \to P_2$ in $\ExistStr_{\mathrm{w}}$ is mapped to  $(\Span{F}^{\op}, \rho^{\bullet}, \Qt{L})$ in $\Ar^{\colax}(\mathrm{Dbl})$, where the tight transformation $\rho^{\bullet}$ agrees with the 2-natural transformation $\rho \colon LP_1 \to P_2F^{\op}$ on objects and tight arrows, and its component at a $\mathfrak{C}$-span $X_1 \xleftarrow{x_1} X \xrightarrow{x_2} X_2$ corresponds to the pasting diagram
\begin{equation*}
\begin{tikzcd}
	{LP_1X_1} & {LP_1X} && {LP_1X_2} && {P_2FX_2} && {P_2FX_2} \\
	{LP_1X_1} & {LP_1X} && { LP_1X } && {P_2FX} && {P_2FX_2} \\
	\\
	{P_2FX_1} &&&&& {P_2FX} && {P_2FX_2}
	\arrow["{LP_1{x_1}}", from=1-1, to=1-2]
	\arrow[equals, from=1-1, to=2-1]
	\arrow[""{name=0, anchor=center, inner sep=0}, "{L\exists^{P_1}x_2}", from=1-2, to=1-4]
	\arrow[equals, from=1-2, to=2-2]
	\arrow[""{name=1, anchor=center, inner sep=0}, "{\rho_{X_2}}", from=1-4, to=1-6]
	\arrow["{LP_2x_2}", from=1-4, to=2-4]
	\arrow[""{name=2, anchor=center, inner sep=0}, equals, from=1-6, to=1-8]
	\arrow["{P_2Fx_2}", from=1-6, to=2-6]
	\arrow[equals, from=1-8, to=2-8]
	\arrow["{LP_1x_1}"', from=2-1, to=2-2]
	\arrow["{\rho_{X_1}}"', from=2-1, to=4-1]
	\arrow[""{name=3, anchor=center, inner sep=0}, equals, from=2-2, to=2-4]
	\arrow[""{name=4, anchor=center, inner sep=0}, "{\rho_X}"', from=2-4, to=2-6]
	\arrow[""{name=5, anchor=center, inner sep=0}, "{\exists^{P_2}Fx_2}"', from=2-6, to=2-8]
	\arrow[equals, from=2-6, to=4-6]
	\arrow[equals, from=2-8, to=4-8]
	\arrow[""{name=6, anchor=center, inner sep=0}, "{P_2Fx_1}"', from=4-1, to=4-6]
	\arrow["{\exists^{P_2}Fx_2}"', from=4-6, to=4-8]
	\arrow["{L\eta^{P_1 x_2}}", color={rgb,255:red,92;green,92;blue,214}, between={0.2}{0.8}, Rightarrow, from=3, to=0]
	\arrow["{\rho_{x_2}}"', color={red}, between={0.2}{0.8}, Rightarrow, from=4, to=1]
	\arrow["{\epsilon^{P_2Fx_2}}"', color={rgb,255:red,92;green,92;blue,214}, between={0.2}{0.8}, Rightarrow, from=5, to=2]
	\arrow["{\rho^{-1}_{x_1}}"', color={red}, between={0.2}{0.8}, Rightarrow, from=6, to=3]
\end{tikzcd}
\end{equation*}
\\

\item A 2-morphism $(\alpha, \beta)$ in $\ExistStr$ is mapped to  $(\Span{\alpha}^{\op}, \Qt{\beta})$ in $\Ar^{\colax}(\mathrm{Dbl})$.\\

\end{itemize}

\begin{corollary}
\label{corollary:Double functors from existential structures}
   The 2-functor $(-)^{\bullet}$ induces a pseudofunctor 
   \begin{equation*}
       \SM^{\lax} \ExistStr_{\mathrm{w}}  \to \SM^{\lax} \Ar^{\colax}( \mathrm{Dbl}) \cong \Ar^{\colax}(\SM^{\lax} \mathrm{Dbl}).
   \end{equation*}
   In particular, every $\mathfrak{C}$-existential symmetric monoidal $\mathcal{K}$-structure $P \colon \mathsf{C}^{\op} \to \mathcal{K}$ can be extended to a lax symmetric monoidal pseudo double functor $\Span{\mathfrak{C}}^{\op} \to \Qt{\mathcal{K}}$.
\end{corollary}
\begin{proof}
    Lax symmetric monoidal  pseudofunctors preserve symmetric pseudomonoids. 
\end{proof}

\begin{remark}
    It is clear from the definition of $(-)^{\bullet}$ on 1-morphisms that $(-)^{\bullet}$ restricts to a cartesian 2-functor on $\ExistStr$ whose image lies in $\Ar^{\colax}(\Dbl)_{\mathrm{cc}}$, the sub-2-category of $\Ar^{\colax}(\Dbl)$ whose 1-morphisms involve only tight transformations that are companion commuter transformations.
\end{remark}

In face of this remark, we now argue that if $P$ is not just an existential structure, but a $\mathfrak{C}$-regular $\mathcal{K}$-hyperdoctrine $(P, \mu, I)$, then $(P^{\bullet}, \mu^{\bullet}, I^{\bullet})$ is a symmetric pseudomonoid in $\Ar^{\colax}(\Dbl)_{\mathrm{cc}}$.  Clearly, \(I^{\bullet }\) is a companion commuter transformation; a square in $\Qt{\mathcal{K}}$ is a companion commuter precisely when it is an iso-2-cell in $\mathcal{K}$. In principle, however, there is no reason to expect the cell components of \(\mu ^{\bullet }\) to be invertible --- the mate of an invertible 2-cell, or even an identity 2-cell, need not be invertible. This is true here precisely due to the Frob\"enius law, as long as the adequate triple $\mathfrak{C}$ is cartesian.

\begin{lemma} 
\label{lemma: regular hyperdoctrine extends to double functor}
 Let $\mathfrak{C}$ be a cartesian adequate triple, $\mathcal{K}$ be a symmetric monoidal 2-category, and $P$ be a $\mathfrak{C}$-regular $\mathcal{K}$-hyperdoctrine. Then the component  \(\mu ^{\bullet }_{X, Y}\) of \(P^{\bullet } \times  P^{\bullet } \xrightarrow{\mu^{\bullet}}  P^{\bullet }(-\times -)\) at $\mathfrak{C}$-spans \((X_1 \xleftarrow {x_1} X \xrightarrow {x_2} X_2, Y_1 \xleftarrow {y_1} Y \xrightarrow {y_2} Y_2)\) 
is a companion commuter square in $\Qt{\mathcal{K}}$. 
\end{lemma}

\begin{proof}\par{}Since \(P^{\bullet }\) takes values in \(\mathbb {Q}\mathsf {t}(\mathcal {K})\), the cell \(\mu ^{\bullet }_{X,Y}\) being a companion commuter amounts to it being invertible as a cell in \(\mathcal {K}\). By the definition
of \(\mu ^{\bullet }_{X,Y}\), it is enough to show that the mate of the naturality square \(\mu^P_{x_2, y_2}\) is invertible. 

Factorise $x_2 \times y_2$ as $(\id_{X_2} \times y_2) \circ (x_2 \times \id_Y)$ in $\mathsf{C}$. This lets us express the mate of $\mu^P_{x_2, y_2}$ as the composite of the mates of $\mu^P_{\id_{X_2}, y_2}$ and $\mu^P_{x_2, \id_Y}$, by the functoriality of mates. It is thus enough to show that each of these mates is invertible.

Recall from Proposition~\ref{Proposition:Internal vs External monoidal structures} that $\mu^P_{\id_{X_2}, y_2}$ can be expressed in terms of the monoidal structure of the fibres as
\begin{equation*}
\begin{tikzcd}
	& {PX_2 \otimes_{\mathcal{K}}PY_2} &&&& {PX_2 \otimes_{\mathcal{K}} PY} \\
	\\
	{\mu^P_{\id_{X_2}, y_2} =} & {P(X_2 \times Y_2) \otimes_{\mathcal{K}}P(X_2 \times Y_2)} &&&& {P(X_2 \times Y) \otimes_{\mathcal{K}} P(X_2 \times Y)} \\
	\\
	& {P(X_2 \times Y_2)} &&&& {P(X_2 \times Y)}
	\arrow[""{name=0, anchor=center, inner sep=0}, "{\id_{PX_2} \otimes_{\mathcal{K}} Py_2}", from=1-2, to=1-6]
	\arrow["{P(\pi^{X_2, Y_2}_{X_2}) \otimes_{\mathcal{K}}P(\pi^{X_2, Y_2}_{Y_2})}"', from=1-2, to=3-2]
	\arrow["{P(\pi^{X_2, Y}_{X_2}) \otimes_{\mathcal{K}}P(\pi^{X_2, Y}_Y)}", from=1-6, to=3-6]
	\arrow[""{name=1, anchor=center, inner sep=0}, "{P(\id_{X_2} \times y_2) \otimes_{\mathcal{K}} P(\id_{X_2} \times y_2)}", from=3-2, to=3-6]
	\arrow["{\otimes_{P(X_2 \times Y_2)}}"', from=3-2, to=5-2]
	\arrow["{\otimes_{P(X_2 \times Y)}}", from=3-6, to=5-6]
	\arrow[""{name=2, anchor=center, inner sep=0}, "{P(\id_{X_2} \times y_2)}"', from=5-2, to=5-6]
	\arrow["{P(\alpha_1) \otimes_{\mathcal{K}} P(\alpha_2)}", color={rgb,255:red,214;green,92;blue,92}, between={0.2}{0.8}, Rightarrow, from=0, to=1]
	\arrow["{\mu^{P(\id_{X_2} \times y_2)}}", color={rgb,255:red,92;green,92;blue,214}, between={0.2}{0.8}, Rightarrow, from=1, to=2]
\end{tikzcd},
\end{equation*}

where 

\begin{equation*}
    \begin{tikzcd}
	{X_2 \times Y} && {X_2} && {X_2 \times Y} && Y \\
	\\
	{X_2 \times Y_2} && {X_2} && {X_2 \times Y_2} && {Y_2}
	\arrow[""{name=0, anchor=center, inner sep=0}, "{\pi^{X_2, Y}_{X_2}}", from=1-1, to=1-3]
	\arrow["{\id_{X_2} \times y_2}"', from=1-1, to=3-1]
	\arrow[equals, from=1-3, to=3-3]
	\arrow[""{name=1, anchor=center, inner sep=0}, "{\pi^{X_2, Y}_Y}", from=1-5, to=1-7]
	\arrow["{\id_{X_2} \times y_2}"', from=1-5, to=3-5]
	\arrow["\lrcorner"{anchor=center, pos=0.125}, draw=none, from=1-5, to=3-7]
	\arrow["{y_2}", from=1-7, to=3-7]
	\arrow[""{name=2, anchor=center, inner sep=0}, "{\pi^{X_2, Y_2}_{X_2}}"', from=3-1, to=3-3]
	\arrow[""{name=3, anchor=center, inner sep=0}, "{\pi^{X_2, Y_2}_{Y_2}}"', from=3-5, to=3-7]
	\arrow["{\alpha_1}"{description}, color={rgb,255:red,214;green,92;blue,92}, draw=none, from=0, to=2]
	\arrow["{\alpha_2}"{description}, color={rgb,255:red,214;green,92;blue,92}, draw=none, from=1, to=3]
\end{tikzcd}
\end{equation*}

The mate of $\mu^P_{\id_{X_2}, y_2}$ can therefore be expressed as the following composite of mates:

\begin{equation*}
    = \adjustbox{scale=0.7}{\begin{tikzpicture}[baseline=5em, marker/.style={circle, fill, inner sep=1pt, text=white}]
\node[box, box ports south=5, box ports north=5, color=black] (Box1) at (0,0) {e_{\id_{PX_2}} \otimes_{\mathcal{K}} \eta^{Py_2}};

\node[box, box ports south=5, box ports north=5, minimum width=5cm, color=red] (Box2) at ($(Box1.north.5)+(1.5,2)$) {P\alpha_1 \otimes_{\mathcal{K}} P\alpha_2};

\node[box, box ports south=5, box ports north=5, minimum width=5cm, color=blue] (Box3) at ($(Box2.north.5)+(4,2)$) {\mu^{P(\id_{X_2} \times y_2)}};

\node[coordinate, label=below:{{\scriptsize $P^{\pi^{X_2, Y}_{X_2}} \otimes_{\mathcal{K}} P(\pi^{X_2, Y}_Y)$}}]                          (input1) at ($(Box2.south.4)+(0,-3)$) {};
\node[coordinate, label={[text=black]below:{{\scriptsize $\otimes_{P(X_2 \times Y)}$}}}]             (input2) at ($(Box3.south.4)+(0,-5.5)$) {};
\node[coordinate, label={[text=black]below:{{\scriptsize $\exists^P(\id_{X_2} \times y_2)$}}}]            (input3) at ($(input2)+(3,0)$) {};

\node[coordinate, label={[text=black]above:{{\scriptsize $\id_{PX_2} \otimes_{\mathcal{K}} \exists^P y_2$}}}]            (output1) at ($(Box1.north.1) + (0,7)$) {};
\node[coordinate, label={[text=black]above:{{\scriptsize $P(\pi^{x_2, Y_2}_{X_2}) \otimes_{\mathcal{K}} \pi^{X_2, Y_2}_{Y_2})$}}}]  (output2) at ($(Box2.north.2)+(0,4.5)$) {};
\node[coordinate, label={[text=black]above:{{\scriptsize $\otimes_{P(X_2, \times Y_2)}$}}}]           (output3) at ($(output2)+(6,0)$) {};


\node[coordinate] (cap1) at ($(Box2.north.5)+(1,0)$){};

\node[coordinate] (cup1) at ($(cap1)+(1,0)$){};

\node[coordinate] (cup2) at ($(Box3.north.4)+(3,0)$){};

\wires[violet, looseness=3, line width=2pt]{cap1 = {north = Box2.north.5}}{};

\wires[black, looseness=3, line width=2pt]{cup1 = {south = cap1.south, north = Box3.south.1}}{};

\wires[black, looseness=3]{cup2 = {north = Box3.north.4, south=input3.north}}{};

\wires[black, looseness=0]{Box1 = {north.1 = output1.south, north.4 = Box2.south.1}}{};

\wires[black, looseness=0]{Box2 = {north.2 = output2.south, south.4 = input1.north}}{};

\wires[black, looseness=0]{Box3 = {north.2 = output3.south, south.4 = input2.north}}{};


\node[coordinate, label={[text=black]right:{{\scriptsize $\id_{P_{X_2}} \otimes_{\mathcal{K}} Py_2$}}}]            (label1) at ($(Box2.south.1) + (0,-1)$) {};

\node[coordinate, label={[text=black]right:{{\scriptsize $P(\id_{X_2} \times y_2)^2$}}}]            (label2) at ($(Box3.south.1) + (0,-1)$) {};

\end{tikzpicture}}
\end{equation*}

The purple wire represents the tensor product of counits $\textcolor{violet}{\epsilon^{P(\id_{X_2} \times y_2)} \otimes_{\mathcal{K}} \epsilon^{P(\id_{X_2} \times y_2)}}$, which we can factorise as $(e_{\id_{P(X_2 \times Y_2)}} \otimes_{\mathcal{K}} \epsilon^{P(\id_{X_2} \times y_2)}) \mid (\epsilon^{P(\id_{X_2} \times y_2)} \otimes_{\mathcal{K}} v_{\exists^P (\id_{X_2} \times y_2)})$. Similarly, the red box $P\alpha_1 \otimes_{\mathcal{K}} P\alpha_2$ can be factorised as $(v_{P(\pi^{X_2, Y_2}_{X_2})} \otimes_{\mathcal{K}} P\alpha_2) \mid (P\alpha_1 \otimes_{\mathcal{K}} v_{P(\pi^{X_2, Y}_Y)})$. Making these changes lets us rewrite the mate of $\mu^P_{\id_{X_2}, y_2}$ as 

\begin{equation*}
    \adjustbox{scale=0.7}{\begin{tikzpicture}[baseline=5em, marker/.style={circle, fill, inner sep=1pt, text=white}]
\node[box, box ports south=5, box ports north=5, color=black] (Box1) at (0,0) {e_{\id_{PX_2}} \otimes_{\mathcal{K}} \eta^{Py_2}};

\node[box, box ports south=5, box ports north=5, minimum width=5cm, color=red] (Box2) at ($(Box1.north.5)+(1.5,2)$) {v_{P(\pi^{X_2, Y_2}_{X_2})} \otimes_{\mathcal{K}} P\alpha_2};

\node[box, box ports south=5, box ports north=5, minimum width=5cm, color=blue] (Box3) at ($(Box2.north.5)+(8,0)$) {\mu^{P(\id_{X_2} \times y_2)}};

\node[box, box ports south=5, box ports north=5, color=red] (Box4) at ($(Box2.south.4)+(1,-2)$) {P\alpha_1 \otimes_{\mathcal{K}} v_{P(\pi^{X_2, Y}_Y)}};

\node[box, box ports south=5, box ports north=5, color=violet] (Box5) at ($(cup1)+(1,1)$) {\epsilon^{P(\id_{X_2} \times y_2)} \times v_{\exists^P (\id_{X_2} \times y_2)}};

\node[box, box ports south=5, box ports north=5, color=violet] (Box6) at ($(Box2.north.5)+(1,4)$) {e_{\id_{P(X_2 \times Y_2)}} \otimes_{\mathcal{K}} \epsilon^{P(\id_{X_2} \times y_2)}};

\node[coordinate, label=below:{{\scriptsize $P({\pi^{X_2, Y}_{X_2}}) \otimes_{\mathcal{K}} P(\pi^{X_2, Y}_Y)$}}]                          (input1) at ($(Box4.south.3)+(0,-0.5)$) {};
\node[coordinate, label={[text=black]below:{{\scriptsize $\otimes_{P(X_2 \times Y)}$}}}]             (input2) at ($(Box3.south.4)+(0,-4)$) {};
\node[coordinate, label={[text=black]below:{{\scriptsize $\exists^P(\id_{X_2} \times y_2)$}}}]            (input3) at ($(input2)+(3,0)$) {};

\node[coordinate, label={[text=black]above:{{\scriptsize $\id_{PX_2} \otimes_{\mathcal{K}} \exists^P y_2$}}}]            (output1) at ($(Box1.north.1) + (0,7)$) {};
\node[coordinate, label={[text=black]above:{{\scriptsize $P(\pi^{x_2, Y_2}_{X_2}) \otimes_{\mathcal{K}} P(\pi^{X_2, Y_2}_{Y_2})$}}}]  (output2) at ($(Box2.north.2)+(0,4.5)$) {};
\node[coordinate, label={[text=black]above:{{\scriptsize $\otimes_{P(X_2, \times Y_2)}$}}}]           (output3) at ($(output2)+(10,0)$) {};


\node[coordinate] (cup1) at ($(Box3.south.1)+(-1.5,0)$){};

\node[coordinate] (cup2) at ($(Box3.north.4)+(3,0)$){};

\wires[black, looseness=3]{cup1 = {south = Box3.south.1}}{};

\wires[black, looseness=3]{cup2 = {north = Box3.north.4, south=input3.north}}{};

\wires[black, looseness=0]{Box1 = {north.1 = output1.south, north.4 = Box2.south.1}}{};

\wires[black, looseness=1]{Box2 = {north.2 = output2.south}}{};

\wires[black, looseness=0]{Box3 = {north.2 = output3.south, south.4 = input2.north}}{};

\wires[violet, looseness=1]{Box5 = {north.3 = Box6.south.5, south.1 = Box4.north.5, south.5 = cup1.north}}{};

\wires[violet, looseness=1]{Box6 = {south.1 = Box2.north.4}}{};

\wires[red, looseness=1]{Box4 = {south.3 = input1.north, north.1 = Box2.south.3}}{};


\node[coordinate, label={[text=black]right:{{\scriptsize $\id_{P_{X_2}} \otimes_{\mathcal{K}} Py_2$}}}]            (label1) at ($(Box2.south.1) + (0,-1)$) {};

\node[coordinate, label={[text=black]right:{{\scriptsize $P(\id_{X_2} \times y_2)^2$}}}]            (label2) at ($(Box3.south.1) + (0,-1)$) {};

\node[coordinate, label={[text=black]right:{{\tiny $\id_{P(X_2 \times Y_2)} \otimes_{\mathcal{K}} P(\id_{X_2} \times y_2)$}}}]            (label3) at ($(Box6.south.1) + (0,-1)$) {};

\node[coordinate, label={[text=black]right:{{\tiny $\id_{P(X_2 \times Y_2)} \otimes_{\mathcal{K}} \exists^P(\id_{X_2} \times y_2)$}}}]            (label4) at ($(Box6.south.5) + (0.2,-1)$) {};

\node[coordinate, label={[text=black]right:{{\tiny $P(\id_{X_2} \times y_2) \otimes_{\mathcal{K}} \id_{P(X_2 \times Y)}$}}}]            (label5) at ($(Box4.north.5) + (0.1,0.5)$) {};

\end{tikzpicture}}
\end{equation*}

This is a composite of invertible 2-cells. To make it apparent, we recolour the diagram like so:

\begin{equation*}
    \adjustbox{scale=0.7}{\begin{tikzpicture}[baseline=5em, marker/.style={circle, fill, inner sep=1pt, text=white}]
\node[box, box ports south=5, box ports north=5, color=red] (Box1) at (0,0) {e_{\id_{PX_2}} \otimes_{\mathcal{K}} \eta^{Py_2}};

\node[box, box ports south=5, box ports north=5, minimum width=5cm, color=red] (Box2) at ($(Box1.north.5)+(1.5,2)$) {v_{P(\pi^{X_2, Y_2}_{X_2})} \otimes_{\mathcal{K}} P\alpha_2};

\node[box, box ports south=5, box ports north=5, minimum width=5cm, color=blue] (Box3) at ($(Box2.north.5)+(8,0)$) {\mu^{P(\id_{X_2} \times y_2)}};

\node[box, box ports south=5, box ports north=5, color=black] (Box4) at ($(Box2.south.4)+(1,-2)$) {P\alpha_1 \otimes_{\mathcal{K}} v_{P(\pi^{X_2, Y}_Y)}};

\node[box, box ports south=5, box ports north=5, color=blue] (Box5) at ($(cup1)+(-1,2)$) {\epsilon^{P(\id_{X_2} \times y_2)} \times v_{\exists^P (\id_{X_2} \times y_2)}};

\node[box, box ports south=5, box ports north=5, color=red] (Box6) at ($(Box2.north.5)+(1,4)$) {e_{\id_{P(X_2 \times Y_2)}} \otimes_{\mathcal{K}} \epsilon^{P(\id_{X_2} \times y_2)}};

\node[coordinate, label=below:{{\scriptsize $P({\pi^{X_2, Y}_{X_2}}) \otimes_{\mathcal{K}} P(\pi^{X_2, Y}_Y)$}}]                          (input1) at ($(Box4.south.3)+(0,-0.5)$) {};
\node[coordinate, label={[text=black]below:{{\scriptsize $\otimes_{P(X_2 \times Y)}$}}}]             (input2) at ($(Box3.south.4)+(0,-4)$) {};
\node[coordinate, label={[text=black]below:{{\scriptsize $\exists^P(\id_{X_2} \times y_2)$}}}]            (input3) at ($(input2)+(3,0)$) {};

\node[coordinate, label={[text=black]above:{{\scriptsize $\id_{PX_2} \otimes_{\mathcal{K}} \exists^P y_2$}}}]            (output1) at ($(Box1.north.1) + (0,7)$) {};
\node[coordinate, label={[text=black]above:{{\scriptsize $P(\pi^{x_2, Y_2}_{X_2}) \otimes_{\mathcal{K}} P(\pi^{X_2, Y_2}_{Y_2})$}}}]  (output2) at ($(Box2.north.2)+(0,4.5)$) {};
\node[coordinate, label={[text=black]above:{{\scriptsize $\otimes_{P(X_2, \times Y_2)}$}}}]           (output3) at ($(output2)+(10,0)$) {};


\node[coordinate] (cup1) at ($(Box3.south.1)+(-1.5,0)$){};

\node[coordinate] (cup2) at ($(Box3.north.4)+(3,0)$){};

\wires[blue, looseness=3]{cup1 = {south = Box3.south.1}}{};

\wires[blue, looseness=3]{cup2 = {north = Box3.north.4, south=input3.north}}{};

\wires[red, looseness=0]{Box1 = {north.1 = output1.south, north.4 = Box2.south.1}}{};

\wires[red, looseness=1]{Box2 = {north.2 = output2.south}}{};

\wires[blue, looseness=0]{Box3 = {north.2 = output3.south, south.4 = input2.north}}{};

\wires[blue, looseness=1]{Box5 = {north.3 = Box6.south.5, south.1 = Box4.north.5, south.5 = cup1.north}}{};

\wires[red, looseness=1]{Box6 = {south.1 = Box2.north.4}}{};

\wires[black, looseness=1]{Box4 = {south.3 = input1.north, north.1 = Box2.south.3}}{};


\node[coordinate, label={[text=black]right:{{\scriptsize $\id_{P_{X_2}} \otimes_{\mathcal{K}} Py_2$}}}]            (label1) at ($(Box2.south.1) + (0,-1)$) {};

\node[coordinate, label={[text=black]right:{{\scriptsize $P(\id_{X_2} \times y_2)^2$}}}]            (label2) at ($(Box3.south.1) + (0,-1)$) {};

\node[coordinate, label={[text=black]right:{{\tiny $\id_{P(X_2 \times Y_2)} \otimes_{\mathcal{K}} P(\id_{X_2} \times y_2)$}}}]            (label3) at ($(Box6.south.1) + (0,-1)$) {};

\node[coordinate, label={[text=black]right:{{\tiny $\id_{P(X_2 \times Y_2)} \otimes_{\mathcal{K}} \exists^P(\id_{X_2} \times y_2)$}}}]            (label4) at ($(Box6.south.5) + (0.5,-0.5)$) {};

\node[coordinate, label={[text=black]right:{{\tiny $P(\id_{X_2} \times y_2) \otimes_{\mathcal{K}} \id_{P(X_2 \times Y)}$}}}]            (label5) at ($(Box4.north.5) + (0.1,0.5)$) {};
\end{tikzpicture}}
\end{equation*}

The red part is $v_{P(\pi^{X_2, Y_2}_{X_2})} \otimes_{\mathcal{K}} \mathcal{B}_{\alpha_2}^{-1}$, where $\mathcal{B}_{\alpha_2}$ is the Beck-Chevalley 2-cell for the $\mathfrak{C}$-pullback $\alpha_2$ (note that $y_2 \in \mathfrak{C}^r$ and the left class contains all projections since $\mathfrak{C}$ is a cartesian adequate triple). The blue part is the inverse of the Frobenius $2$-cell $\mathfrak{F}^r_{\id_{X_2} \times y_2}$, and the black part is invertible as $P(\alpha_1)$ is the image of an identity $2$-cell. We can similarly decompose $\mu^P_{x_2, \id_Y}$ in terms of the inverse of $\mathfrak{F}^l_{x_2, \id_Y}$ and the Beck-Chevalley cell for the $\mathfrak{C}$-pullback 
\begin{equation*}
    \begin{tikzcd}
	{X \times Y} && X \\
	\\
	{X_2 \times Y} && {X_2}
	\arrow[""{name=0, anchor=center, inner sep=0}, "{\pi^{X, Y}_X}", from=1-1, to=1-3]
	\arrow["{x_2 \times \id_Y}"', from=1-1, to=3-1]
	\arrow["\lrcorner"{anchor=center, pos=0.125}, draw=none, from=1-1, to=3-3]
	\arrow["{x_2}", from=1-3, to=3-3]
	\arrow[""{name=1, anchor=center, inner sep=0}, "{\pi^{X_2, Y}_{X_2}}"', from=3-1, to=3-3]
	\arrow["{\beta_1}"{description}, color={rgb,255:red,214;green,92;blue,92}, draw=none, from=0, to=1]
\end{tikzcd}.
\end{equation*}

\end{proof}

\section{Double functors as hyperdoctrines}
\label{section:DoublePseudofunctorsAsDoctrines}
This section is dedicated to proving a converse result: under mild conditions on the adequate triple, the tight component of a suitable double functor \(Q\colon  \Span{\mathfrak{C}}^{\op} \to  \Qt{\mathcal{K}}\) is the underlying 2-functor
of a $\mathfrak{C}$-regular $\mathcal{K}$-hyperdoctrine. For the sake of simplicity, we shall prove it for strict double functors --- 
this is not too egregious since there are strictification results for pseudo double functors (see Theorem 7.5 in \cite{grandis-1999-limits}). Note that the strictification of a pseudo double functor has a different domain, but it is  equivalent to the original one via pseudo double functors.

The strategy is simple: we observe that \(\Span{\mathfrak{C}}^{\op}\) admits companions precisely for those tight morphisms $f^{\op}$ with
 \(f\) in the class \(\mathfrak{C}^l\) (namely the span \(f^*\coloneqq  B \xleftarrow {f} A = A\)), and conjoints for those \(f\) in the class \(\mathfrak{C}^r\) (the span \(f_! \coloneqq  A = A \xrightarrow {f} B\)). 
 These are preserved by double functors, and therefore the images of morphisms
 in  \(\mathfrak{C}^r\) will yield a conjunction in $\Qt{\mathcal{K}}$, i.e., an adjunction in \(\mathcal{K}\). That provides existential quantification. The fact that Beck-Chevalley holds
 for the appropriate maps is encoded by the loose composition of spans --- this is well-known, but we state it here nonetheless since there is the addition of the adequate triple to contend with.
\begin{proposition}
\label{Proposition:Preservation of companions and conjoints.}
\par{}Let \(Q\colon  \mathbb {C} \to  \mathbb {D}\) be a normal lax double functor (i.e, one that strictly preserves identities). Then \(Q\) maps companion pairs to companion pairs and conjunctions to conjunctions.\begin{proof}\par{}See Proposition 3.8 in \cite{dawson-2010-span} .\end{proof}\end{proposition}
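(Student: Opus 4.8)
The plan is to unwind the elementary definition of companion (and, dually, conjoint) and transport the binding cells across $Q$, using normality precisely to repair the unit loose arrows that appear on their boundaries. Recall that a companion for a tight arrow $f \colon A \to B$ consists of a loose arrow $f^>$ together with binding cells $\phi_1$ (loose boundary $f^>$ on top, $e_B$ on the bottom) and $\phi_2$ ($e_A$ on top, $f^>$ on the bottom), subject to the two axioms $\phi_2 \mid \phi_1 = v_{f^>}$ and $\frac{\phi_2}{\phi_1} = e_f$. I would propose $Q(f^>)$ as the companion for $Qf$ and build its binding cells $\psi_1, \psi_2$ by composing the images $Q(\phi_1), Q(\phi_2)$ vertically with the normality isomorphisms $(Q_e)_A, (Q_e)_B$, which convert the occurrences of $Q(e_A), Q(e_B)$ into the genuine unit loose arrows $e_{QA}, e_{QB}$.

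First I would verify the vertical axiom $\frac{\psi_2}{\psi_1} = e_{Qf}$. Since the loose component $Q_1 \colon \mathbb{C}_1 \to \mathbb{D}_1$ is an honest functor, it preserves vertical composition of cells strictly, so the middle composite satisfies $\frac{Q(\phi_2)}{Q(\phi_1)} = Q\bigl(\frac{\phi_2}{\phi_1}\bigr) = Q(e_f)$; the naturality square of $Q_e$ taken at the tight arrow $f$ then reconciles the boundary corrections $(Q_e)_A, (Q_e)_B$ with $Q(e_f)$ and identifies the corrected composite with $e_{Qf}$.

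Next I would verify the horizontal axiom $\psi_2 \mid \psi_1 = v_{Q(f^>)}$. Here horizontal composition is preserved only up to the laxator, so $Q(\phi_2) \mid Q(\phi_1)$ must first be glued using the comparison cells $(Q_\odot)_{e_A, f^>}$ and $(Q_\odot)_{f^>, e_B}$ to yield $Q(\phi_2 \mid \phi_1) = Q(v_{f^>}) = v_{Q(f^>)}$, the last equality again by functoriality of $Q_1$. The boundary corrections inserted from $Q_e$ are then reconciled against these laxator comparisons by the unital coherence axiom of a (normal) double pseudofunctor; this is exactly the step where normality earns its keep, since it is the invertibility of $Q_e$ that lets $Q(e_X)$ stand in for $e_{QX}$ consistently in both equations. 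The argument for conjoints is formally dual: one transports $\eta^f$ and $\epsilon^f$ across $Q$, repairs their unit-loose boundaries with $Q_e$, and checks the two conjoint axioms ($\eta^f$/$\epsilon^f$ composing to $v_{f^\flat}$ and to $e_f$) in the same manner.

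The main obstacle I anticipate is the coherence bookkeeping in the horizontal axiom: one must track how the two laxator cells $Q_\odot$ and the normality corrections $Q_e$ compose and confirm they cancel by the stated unital coherence condition, rather than leaving behind a residual comparison cell. The underlying manipulations are routine string-diagram calculations once the correction cells are inserted, so the real content lies in choosing those insertions so that both binding equations hold simultaneously — which, as noted, is precisely what normality guarantees.
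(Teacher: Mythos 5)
Your proposal is correct, and it is worth noting that the paper itself does not actually prove this proposition: it simply cites Proposition 3.8 of Dawson--Par\'e--Pronk. What you have written is, in essence, the standard direct proof of that cited result, so you have supplied the content the paper outsources. Your choice of $Q(f^>)$ as the candidate companion, the construction of the binding cells by pasting $Q(\phi_1), Q(\phi_2)$ with the normality cells (note that only $\psi_2$ genuinely needs the \emph{inverse} of $(Q_e)_A$; the correction on $\psi_1$ uses $(Q_e)_B$ in its given direction), the verification of the vertical axiom via strict functoriality of $Q_1$ plus naturality of $Q_e$ at $f$, and the verification of the horizontal axiom via naturality of $Q_\odot$ plus the double unitality coherence axioms --- all of this is exactly right. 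Two bookkeeping points deserve care in a full write-up. First, since $Q$ is only \emph{lax}, $Q_\odot$ is not invertible, so the horizontal-axiom computation must be arranged so that $Q_\odot$ only ever appears composed in its given direction; concretely, one transposes the (invertible) unitor $\rho_{Q(f^>)}$ to the other side of the desired equation and then applies naturality of $Q_\odot$ followed by the two unitality coherences, rather than ``inverting'' the gluing as your phrasing loosely suggests. Second, in a non-unitary ambient double category the companion identities read $\phi_2 \mid \phi_1 = \frac{\lambda_{f^>}}{\rho_{f^>}^{-1}}$ rather than literally $v_{f^>}$, and it is precisely the double unitality axioms that reconcile $Q$ of these unitors with the unitors of $\mathbb{D}$; you flag this as the anticipated obstacle, and it is indeed where all the coherence is spent. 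Finally, your argument tacitly uses that $Q_0$ is a \emph{strict} functor (so that the tight boundaries of $Q(\phi_i)$ are $Qf$ and genuine identities), which is exactly what the hypothesis ``lax double functor'' (as opposed to double pseudofunctor) provides; for a tightly-pseudo $Q$ one would additionally need the unitors $\iota^Q_0$ on the tight boundaries.
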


\begin{proposition}
\label{prop:Double functor induces existential structure}
\par{}For any adequate triple $\mathfrak{C}$, the tight component \(Q_0 \colon  \mathsf {C}^{^{\mathsf {op}}} \to  \mathcal {K}\) of a double functor \(Q \colon  \Span{\mathfrak{C}}^{\op} \to  \Qt{\mathcal{K}}\) is a \(\mathfrak{C}\)-existential $\mathcal{K}$-structure.

\begin{proof}  Define \(\exists^Q  f \colon  QA \to  QB\) as \(Q(f_!)\)  (as a morphism in \(\mathcal {K}\)) for each \(f \in  \mathfrak{C}^r\), so that \(\exists^Q  f \dashv  Q_0(f)\) in \(\mathcal {K}\) by Proposition~\ref{Proposition:Preservation of companions and conjoints.}. We must show that these satisfy the 
Beck-Chevalley property with respect to the adequate triple $\mathfrak{C}$.\\

Out of a  
$\mathfrak{C}$-pullback 
\begin{center}\begin{tikzcd}
	A && X \\
	\\
	B && Y
	\arrow[""{name=0, anchor=center, inner sep=0}, "h", from=1-1, to=1-3]
	\arrow["f"', from=1-1, to=3-1]
	\arrow["\lrcorner"{anchor=center, pos=0.125}, draw=none, from=1-1, to=3-3]
	\arrow["g", from=1-3, to=3-3]
	\arrow[""{name=1, anchor=center, inner sep=0}, "k"', from=3-1, to=3-3]
	\arrow["\alpha"{description}, color={rgb,255:red,92;green,92;blue,214}, draw=none, from=0, to=1]
\end{tikzcd},\end{center}
\noindent define a square
\begin{equation*}
    \overline{\mathcal{B}}_{\alpha} \coloneqq \begin{tikzcd}
	X & Y & B & B \\
	X & X & A & B
	\arrow[""{name=0, anchor=center, inner sep=0}, "{{g_!}}"{inner sep=.8ex}, "\shortmid"{marking}, from=1-1, to=1-2]
	\arrow[equals, from=1-1, to=2-1]
	\arrow[""{name=1, anchor=center, inner sep=0}, "{{k^*}}"{inner sep=.8ex}, "\shortmid"{marking}, from=1-2, to=1-3]
	\arrow["{{g^{\op}}}", from=1-2, to=2-2]
	\arrow[""{name=2, anchor=center, inner sep=0}, "\shortmid"{marking}, equals, from=1-3, to=1-4]
	\arrow["{{f^{\op}}}", from=1-3, to=2-3]
	\arrow[equals, from=1-4, to=2-4]
	\arrow[""{name=3, anchor=center, inner sep=0}, "\shortmid"{marking}, equals, from=2-1, to=2-2]
	\arrow[""{name=4, anchor=center, inner sep=0}, "{{h^*}}"'{inner sep=.8ex}, "\shortmid"{marking}, from=2-2, to=2-3]
	\arrow[""{name=5, anchor=center, inner sep=0}, "{{f_!}}"'{inner sep=.8ex}, "\shortmid"{marking}, from=2-3, to=2-4]
	\arrow["{{\hat{\eta}_g}}"{inner sep=.8ex}, ""{marking, text={rgb,255:red,214;green,92;blue,92}}, color={rgb,255:red,214;green,92;blue,92}, between={0.2}{0.8}, Rightarrow, from=3, to=0]
	\arrow["f"', color={rgb,255:red,153;green,92;blue,214}, between={0.2}{0.8}, Rightarrow, from=4, to=1]
	\arrow["{{\hat{\epsilon}_f}}"', color={rgb,255:red,214;green,92;blue,92}, between={0.2}{0.8}, Rightarrow, from=5, to=2]
\end{tikzcd}
\end{equation*}

\noindent in $\Span{\mathfrak{C}}^{\op}$. Now, note that the composite of span morphisms
\begin{equation*}
    \begin{tikzcd}
	Y & Y \\
	Y & B \\
	X & A \\
	A & A
	\arrow[""{name=0, anchor=center, inner sep=0}, "\shortmid"{marking}, equals, from=1-1, to=1-2]
	\arrow[equals, from=1-1, to=2-1]
	\arrow["{k^{\op}}", from=1-2, to=2-2]
	\arrow[""{name=1, anchor=center, inner sep=0}, "{k^*}"{inner sep=.8ex}, "\shortmid"{marking}, from=2-1, to=2-2]
	\arrow["{g^{\op}}"', from=2-1, to=3-1]
	\arrow["{f^{\op}}", from=2-2, to=3-2]
	\arrow[""{name=2, anchor=center, inner sep=0}, "{h^*}"'{inner sep=.8ex}, "\shortmid"{marking}, from=3-1, to=3-2]
	\arrow["{h^{\op}}"', from=3-1, to=4-1]
	\arrow[equals, from=3-2, to=4-2]
	\arrow[""{name=3, anchor=center, inner sep=0}, "\shortmid"{marking}, equals, from=4-1, to=4-2]
	\arrow["{\psi_k}", color={rgb,255:red,92;green,92;blue,214}, between={0.4}{0.8}, Rightarrow, from=1, to=0]
	\arrow["f"', color={rgb,255:red,153;green,92;blue,214}, between={0.2}{0.8}, Rightarrow, from=2, to=1]
	\arrow["{\phi_h}", color={rgb,255:red,92;green,92;blue,214}, between={0.2}{0.6}, Rightarrow, from=3, to=2]
\end{tikzcd}
\end{equation*}
 is just $e_{(kf)^{\op}}$  (as $kf=gh$), so that composite square is mapped by $Q$ to the quintet square representing the commutative diagram $Q_0(\alpha)$ in $\mathcal{K}$. However, the images of the conjoint squares $\psi_k$ and $\phi_h$ in $\Qt{\mathcal{K}}$ are identities in $\mathcal{K}$, so 
 \begin{equation*}
     \begin{tikzcd}
	Y & B \\
	X & A
	\arrow[""{name=0, anchor=center, inner sep=0}, "{{k^*}}"{inner sep=.8ex}, "\shortmid"{marking}, from=1-1, to=1-2]
	\arrow["{{g^{\op}}}"', from=1-1, to=2-1]
	\arrow["{{f^{\op}}}", from=1-2, to=2-2]
	\arrow[""{name=1, anchor=center, inner sep=0}, "{{h^*}}"'{inner sep=.8ex}, "\shortmid"{marking}, from=2-1, to=2-2]
	\arrow["f"', color={rgb,255:red,153;green,92;blue,214}, between={0.2}{0.8}, Rightarrow, from=1, to=0]
\end{tikzcd}
 \end{equation*}
 \noindent itself must be mapped to $Q_0(\alpha)$ as a 2-cell in $\mathcal{K}$.
 We conclude that the inverse of $Q(\overline{\mathcal{B}}_{\alpha})$ (if it exists) is the Beck-Chevalley 2-cell $\mathcal{B}_{\alpha}$  of the $\mathfrak{C}$-pullback $\alpha$. We show such an inverse exists by inverting $\overline{\mathcal{B}_{\alpha}}$ itself.

We can build this inverse by viewing the $\mathfrak{C}$-pullback as the inner part of a loose composite 
\begin{center}\begin {tikzcd}[cramped]
	&& A \\
	& X && B \\
	X && Y && B
	\arrow ["h"', from=1-3, to=2-2]
	\arrow ["f", from=1-3, to=2-4]
	\arrow ["\lrcorner "{anchor=center, pos=0.125, rotate=-45}, draw=none, from=1-3, to=3-3]
	\arrow [equal, from=2-2, to=3-1]
	\arrow ["g", from=2-2, to=3-3]
	\arrow ["k"', from=2-4, to=3-3]
	\arrow [equal, from=2-4, to=3-5]
	\arrow ["{g_!}"', "\shortmid "{marking, text={rgb,255:red,92;green,92;blue,214}}, color={rgb,255:red,92;green,92;blue,214}, from=3-1, to=3-3]
	\arrow ["{k^*}"', "\shortmid "{marking, text={rgb,255:red,92;green,92;blue,214}}, color={rgb,255:red,92;green,92;blue,214}, from=3-3, to=3-5]
\end {tikzcd}.\end{center}

 \noindent There is a unique isomorphism of spans \(\gamma _1 \colon  g_! \odot  k^* \to  (X \xleftarrow {h} A \xrightarrow {f} B)\)
from the universal property of the $\mathfrak{C}$-pullback, and a unique isomorphism of spans \(\gamma _2 \colon  (X \xleftarrow {h} A \xrightarrow {f} B) \to  h^* \odot  f_!\).
The composite \(\gamma _2 \circ  \gamma _1\) then provides the necessary inverse morphism of spans.
\end{proof}\end{proposition}

As a final remark on companions and conjoints, we observe that the components of a tight transformation of double functors into $\Qt{\mathcal{K}}$ at companions and conjoints are determined by the tight components:

\begin{proposition}
\label{proposition: tight transformation determination}
    Let $Q, Q' \colon \mathbb{A} \to \Qt{\mathcal{K}}$ be normal pseudofunctors and $\rho \colon Q \to Q'$ be a tight transformation. For any tight morphism $B \to A \in \mathbb{A}$, we have that $\rho_{f^*} = \rho_f^{-1}$ and $\rho_{f_!} = \mathrm{mate}(\rho_f)$.
\end{proposition}

\begin{proof}
    Cell naturality of $\rho$ with respect to the companion square $\psi_f$ gives
    \begin{equation*}
        \begin{tikzcd}
	{QB } && QB && {QB } && QB && QB \\
	\\
	QB && QA && {Q'B} & {=} & {Q'B} && {Q'B} \\
	\\
	{Q'B} && {Q'A} && {Q'A} && {Q'B} && {Q'A}
	\arrow[""{name=0, anchor=center, inner sep=0}, "\shortmid"{marking}, equals, from=1-1, to=1-3]
	\arrow[equals, from=1-1, to=3-1]
	\arrow[""{name=1, anchor=center, inner sep=0}, "\shortmid"{marking}, equals, from=1-3, to=1-5]
	\arrow["Qf", from=1-3, to=3-3]
	\arrow["{\rho_B}", from=1-5, to=3-5]
	\arrow[""{name=2, anchor=center, inner sep=0}, "\shortmid"{marking}, equals, from=1-7, to=1-9]
	\arrow["{{\rho_B}}"', from=1-7, to=3-7]
	\arrow["{\rho_B}", from=1-9, to=3-9]
	\arrow[""{name=3, anchor=center, inner sep=0}, "{Qf^* }"{inner sep=.8ex}, "\shortmid"{marking}, from=3-1, to=3-3]
	\arrow["{\rho_B}"', from=3-1, to=5-1]
	\arrow["{\rho_A}", from=3-3, to=5-3]
	\arrow["{Q'f}", from=3-5, to=5-5]
	\arrow[""{name=4, anchor=center, inner sep=0}, "\shortmid"{marking}, equals, from=3-7, to=3-9]
	\arrow[equals, from=3-7, to=5-7]
	\arrow["{Q'f}", from=3-9, to=5-9]
	\arrow[""{name=5, anchor=center, inner sep=0}, "{Q'f^*}"'{inner sep=.8ex}, "\shortmid"{marking}, from=5-1, to=5-3]
	\arrow[""{name=6, anchor=center, inner sep=0}, "\shortmid"{marking}, equals, from=5-3, to=5-5]
	\arrow[""{name=7, anchor=center, inner sep=0}, "{{Q'f^*}}"'{inner sep=.8ex}, "\shortmid"{marking}, from=5-7, to=5-9]
	\arrow["{{Q\psi_f}}"{description}, color={rgb,255:red,214;green,92;blue,92}, draw=none, from=0, to=3]
	\arrow["{\rho_f}"{description}, color={rgb,255:red,92;green,92;blue,214}, draw=none, from=1, to=6]
	\arrow["{{v_{\rho_B}}}"{description}, color={rgb,255:red,214;green,92;blue,214}, draw=none, from=2, to=4]
	\arrow["{\rho_{f^*}}"{description}, color={rgb,255:red,92;green,92;blue,214}, draw=none, from=3, to=5]
	\arrow["{{Q'\psi_{f}}}"{description}, color={rgb,255:red,214;green,92;blue,92}, draw=none, from=4, to=7]
\end{tikzcd}
    \end{equation*}

\noindent in $\Qt{\mathcal{K}}$. But $Q$ and $Q'$ are normal, therefore preserve companion pairs (by Proposition~\ref{Proposition:Preservation of companions and conjoints.}). Since companion squares in $\Qt{\mathcal{K}}$ are identities in $\mathcal{K}$, we conclude that $\rho_{f^*} = \rho^{-1}_f$. Similarly, using naturality of $\rho$ with respect to the conjoint square $\hat{\eta}_f$ and the fact that $Q$ and $Q'$ preserve conjoints, we deduce that $\rho_{f_!} = \mathrm{mate}(\rho_f)$.
\end{proof}

By factorising loose morphisms in terms of companions and conjoints and using compositional coherence of tight transformations, we conclude the following:

\begin{corollary}
\label{corollary;tight transformations determined by tight component}
    Let $\mathbb{A}$ be a double category where every loose morphism is a composite of companions and conjoints. Then any tight transformation of normal double functors $Q \xrightarrow{\rho} Q' \colon \mathbb{A} \to \Qt{\mathcal{K}}$ is determined by its tight component.
\end{corollary}

This applies to $\Span{\mathfrak{C}}^{\op}$, of course: any $\mathfrak{C}$-span $X_1 \xleftarrow{x_1} X \xrightarrow{x_2}$ is canonically factorisable as $x_1^* \odot (x_2)_!$. In particular, this means that determining whether the monoidal laxator $\mu$ of a double functor is a companion commuter transformation can be reduced to checking if each mate of a square $\mu_{\id_{\cod(f)}, f}$ is a companion commuter (noting that in this case so are the mates of squares $\mu_{f, \cod(f)}$, using the symmetry):

\begin{corollary}
\label{corollary: reduced commuter condition}
    Let $\mathfrak{C}$ be a cartesian adequate triple, $\mathcal{K}$ be a symmetric monoidal 2-category, and $Q \colon \Span{\mathfrak{C}}^{\op} \to \Qt{\mathcal{K}}$ be a lax symmetric monoidal double functor with monoidal laxator $\mu$. If the  squares $\mathrm{mate}(\mu_{\id_{\cod f}, f})$ are companion commuters for every tight morphism $f$, then $\mu$ is a companion commuter transformation.
\end{corollary}


We now consider the monoidal structure and the Frob\"enius laws. If a double functor \(Q\colon  \Span{\mathfrak{C}}^{\op} \to  \Qt{\mathcal{K}}\) is  lax symmetric monoidal,
then the tight components \(\mu _0\) and \(I_0\) of its monoidal laxator provide a lax monoidal structure for \(Q_0 \colon  \mathsf{C}^{\op} \to  \mathcal {K}\).
If the adequate triple $\mathfrak{C}$ is cartesian and $\mathcal{K}$ is a symmetric monoidal 2-category, then we can transfer the ``external'' monoidal structure of \(Q_0\) to the objects \(QA\), making them symmetric pseudomonoids by Proposition~\ref{Proposition:Internal vs External monoidal structures}. 
All that is left to show is that the Frob\"enius reciprocity property holds with respect to these monoidal structures on the fibres, i.e., for every \(f \colon  A \to  B\) in \(\mathfrak{C}^r\), we seek to invert the canonical cell \(\exists  f \otimes _A (Qf \times  \mathrm{id}_{QA}) \to  \otimes _B (\mathrm{id}_{QB} \times  \exists f)\) in $\mathcal{K}$ given by 
 
  \begin{equation*}
      \adjustbox{scale=0.7}{\begin{tikzpicture}[baseline=6em, marker/.style={circle, fill, inner sep=1pt, text=white}]
\node[coordinate, label=below:{{\scriptsize $Qf \times \id_{QA}$}}] (input1) at (0,0) {};
\node[coordinate, label=below:{{\scriptsize $\otimes_{QA}$}}] (input2) at ($(input1)+(4,0)$) {};
\node[coordinate, label=below:{{\scriptsize $\exists f$}}] (input3) at ($(input2)+(1.5,0)$)  {};

\node[coordinate, label=above:{{ $\id_{QB} \times \exists f$}}] (output1) at (0,7) {};
\node[coordinate, label=above:{{ $\otimes_{QB}$}}] (output2) at ($(output1)+(2.5,0)$) {};
\node[coordinate,  label=above:{}] (output3) at ($(output2)+(2, 0)$) {};

\node[box, box ports south=5, box ports north=5] (Box1) at ($(input1)+(1, 5)$) {\epsilon^{Qf} \times v_{\exists f}};
\node[box, box ports south=5, box ports north=5, minimum width=6 em] (Box2) at ($(output3)+(-1, -4)$) {\mu^{Qf}};

\wires[black]{Box1 = {south.2 = input1.north, north.2=output1.south, }, Box2={}}{};


\node[coordinate] (cup1) at ($(Box2.south.1)+(-1,0)$) {};
\wires[black, looseness=1.5]{cup1 ={south = Box2.south.2, north= Box1.south.4} }{}; 

\node[coordinate] (cap1) at ($(Box2.north.5)+(1,0)$) {};
\wires[black, looseness=2]{cap1 ={north = Box2.north.5, south=input3.north}, Box2= {north.1 = output2.south, south.4 = input2.north} }{};

\end{tikzpicture}}
  \end{equation*}

 \noindent where
\(\mu ^{Qf}\) is the monoidal laxator of the strong monoidal functor \((QB, \otimes _B, I_B) \xrightarrow{Qf}  (QA, \otimes _A, I_A)\).  Under a mild assumption on $\mathfrak{C}$, we can construct this inverse out of \(\mu \) (using that it is a companion commuter transformation) and Beck-Chevalley cells:

\begin{lemma}
\label{lemma:Companion commuter implies hyperdoctrine}
\par{}Let $\mathfrak{C}$ be a cartesian adequate triple such that $\mathfrak{C}^l$ includes all diagonals, $\mathcal{K}$ be a symmetric monoidal 2-category, and $ Q\colon \Span{\mathfrak{C}}^{\op} \to \Qt{\mathcal{K}}$ be a lax symmetric monoidal double functor with companion commuter monoidal laxators. Then \(Q_0\) is a $\mathfrak{C}$-regular $\mathcal{K}$-hyperdoctrine.

\begin{proof}
We already know that $Q_0$ is a $\mathfrak{C}$-existential $\mathcal{K}$-structure from Proposition~\ref{prop:Double functor induces existential structure}, so all that's left is to check the Frob\"enius law. \\

Let $f \colon A \to B \in \mathfrak{C}^r$. Recall from Proposition~\ref{Proposition:Internal vs External monoidal structures} that 
\begin{center}\begin {tikzcd}[cramped]
	{QB \otimes  QB} & QB \\
	{QA \otimes  QA} & QA
	\arrow [""{name=0, anchor=center, inner sep=0}, "{\otimes_{QB}}", from=1-1, to=1-2]
	\arrow ["{Qf\otimes  Qf}"', from=1-1, to=2-1]
	\arrow ["Qf", from=1-2, to=2-2]
	\arrow [""{name=1, anchor=center, inner sep=0}, "{\otimes_{QA}}"', from=2-1, to=2-2]
	\arrow ["{\mu^{Qf}}"', color={rgb,255:red,92;green,92;blue,214}, shorten <=4pt, shorten >=4pt, Rightarrow, from=1, to=0]
\end {tikzcd} = \begin {tikzcd}[cramped]
	{QB \otimes  QB} & {Q(B\times  B)} && QB \\
	{QA \otimes  QA} & {Q(A\times  A)} && QA
	\arrow [""{name=0, anchor=center, inner sep=0}, "{\mu_{B,B}}", from=1-1, to=1-2]
	\arrow ["{Qf\otimes  Qf}"', from=1-1, to=2-1]
	\arrow [""{name=1, anchor=center, inner sep=0}, "{Q(\Delta _B)}", from=1-2, to=1-4]
	\arrow ["{Q(f \times  f)}", from=1-2, to=2-2]
	\arrow ["Qf", from=1-4, to=2-4]
	\arrow [""{name=2, anchor=center, inner sep=0}, "{\mu_{A,A}}"', from=2-1, to=2-2]
	\arrow [""{name=3, anchor=center, inner sep=0}, "{Q(\Delta _A)}"', from=2-2, to=2-4]
	\arrow ["{(\mu _0)_{f,f}}"', color={rgb,255:red,92;green,92;blue,214}, shorten <=4pt, shorten >=4pt, Rightarrow, from=2, to=0]
	\arrow ["Q(\Delta_f) "{description}, color={rgb,255:red,92;green,92;blue,214}, draw=none, from=3, to=1]
\end {tikzcd}.\end{center}
  By Proposition~\ref{proposition: tight transformation determination}, \((\mu _0)_{f,f} = \mu ^{-1}_{f^*, f^*}\) as cells in \(\mathcal {K}\), so we need to find an inverse to 

  \begin{equation*}
    \nabla\coloneqq  \  \adjustbox{scale=0.7}{\begin{tikzpicture}[baseline=6em, marker/.style={circle, fill, inner sep=1pt, text=white}]

\node[box, box ports south=2, box ports north=1, minimum width=8 em] (Box1) at ($(0,2)$) {\epsilon^{Qf} \otimes v_{\exists f}};

\node[box, box ports north=2, minimum width=6 em] (Box2) at ($(2,0)$) {\eta^{Qf} \otimes \eta^{Qf}};

\node[box, box ports north=2, box ports south=2, minimum width=6 em] (Box3) at ($(4,2)$) {\mu^{-1}_{f^*,f^*}};

\node[box, box ports north=2, box ports south=2, minimum width=6 em] (Box4) at ($(6,4)$) {Q(\Delta_f)};

\node[box, box ports north=2, box ports south=2, minimum width=6 em] (Box5) at ($(8,6)$) {Q(\hat{\epsilon}_f)=\epsilon^{Qf}};
    
\node[coordinate, label=below:{{\scriptsize $Qf \otimes \id_{QA}$}}] (input1) at ($(Box1.south.1)+(0,-2)$) {};

\node[coordinate, label=below:{{\scriptsize $\mu_{A,A}$}}] (input2) at ($(Box3.south.2)+(0,-2)$) {};

\node[coordinate, label=below:{{\scriptsize $Q(\Delta_A)$}}] (input3) at ($(Box4.south.2)+(0, -4)$)  {};

\node[coordinate, label=below:{{\scriptsize $\exists f$}}] (input4) at ($(Box5.south.2)+(0, -6)$)  {};

\node[coordinate, label=above:{{\scriptsize $\id_{QB} \otimes \exists f$}}] (output1) at ($(Box1.north.1)+(0,4)$) {};

\node[coordinate, label=above:{{\scriptsize $\mu_{B,B}$}}] (output2) at ($(Box3.north.1)+(0,4)$) {};

\node[coordinate,  label=above:{{\scriptsize $Q(\Delta_B)$}}] (output3) at ($(Box4.north.1)+(0, 2)$) {};

\wires[black]{Box1 ={north.1=output1.south, south.1=input1.north, south.2=Box2.north.1}}{}

\wires[black]{Box3 ={north.1=output2.south, north.2 = Box4.south.1, south.1=Box2.north.2, south.2=input2.north}}{}

\wires[black]{Box4 ={north.1=output3.south, south.2=input3.north, north.2= Box5.south.1}}{}

\wires[black]{Box5 ={south.2 = input4.north}}{}


\end{tikzpicture}}
  \end{equation*}

Now, using the loose compositional coherence of $\mu$ to split $\mu^{-1}_{f^*, f^*}$ as $\mu ^{-1}_{e_B, f^*} \mid  \mu ^{-1}_{f^*, e_A}$ and resorting to the functoriality of mates, we can write $\nabla$ as

  \begin{equation*}
     \adjustbox{scale=0.7}{\begin{tikzpicture}[baseline=6em, marker/.style={circle, fill, inner sep=1pt, text=white}]

\node[box, box ports south=2, box ports north=1, minimum width=8 em] (Box1) at ($(0,2)$) {\epsilon^{Qf} \otimes v_{\exists f}};

\node[box, box ports north=2, minimum width=6 em] (Box2) at ($(2,0)$) {\eta^{Qf} \otimes \eta^{Qf}};

\node[box, box ports north=2, box ports south=1, minimum width=6 em] (Box3) at ($(3.5,1.5)$) {=};

\node[box, box ports north=2, box ports south=2, minimum width=6 em, color=blue] (Box4) at ($(10,9)$) {Q(\Delta_f)};

\node[box, box ports north=2, box ports south=2, minimum width=6 em, color=blue] (Box5) at ($(12,11)$) {Q(\hat{\epsilon}_f)=\epsilon^{Qf}};

\node[box, box ports north=2, box ports south=2, minimum width=6 em] (BoxA) at ($(7,3)$) {\mu^{-1}_{f^*, e_A}};

\node[box, box ports north=2, box ports south=2, minimum width=6 em, color=red] (BoxB) at ($(5.5,5)$) {\mu^{-1}_{e_B, f^*}};

\node[box, box ports north=1, box ports south=2, minimum width=6 em, color=blue] (BoxC) at ($(9,7)$) {Q(=)};
    
\node[coordinate, label=below:{{\scriptsize $Qf \otimes \id_{QA}$}}] (input1) at ($(Box1.south.1)+(0,-2)$) {};

\node[coordinate, label=below:{{\scriptsize $\mu_{A,A}$}}] (input2) at ($(BoxA.south.2)+(0,-3)$) {};

\node[coordinate, label=below:{{\scriptsize $Q(\Delta_A)$}}] (input3) at ($(Box4.south.2)+(0, -9)$)  {};

\node[coordinate, label=below:{{\scriptsize $\exists f$}}] (input4) at ($(Box5.south.2)+(0, -11)$)  {};

\node[coordinate, label=above:{{\scriptsize $\id_{QB} \otimes \exists f$}}] (output1) at ($(Box1.north.1)+(0,9)$) {};

\node[coordinate, label=above:{{\scriptsize $\mu_{B,B}$}}] (output2) at ($(BoxB.north.1)+(0,6)$) {};

\node[coordinate,  label=above:{{\scriptsize $Q(\Delta_B)$}}] (output3) at ($(Box4.north.1)+(0, 2)$) {};

\wires[black]{Box1 ={north.1=output1.south, south.1=input1.north, south.2=Box2.north.1}}{}

\wires[black]{Box3 ={south.1=Box2.north.2}}{}

\wires[blue]{Box4 ={north.1=output3.south, south.2=input3.north, north.2= Box5.south.1, south.1=BoxC.north.1}}{}

\wires[blue]{Box5 ={south.2 = input4.north}}{}

\wires[black]{BoxA ={south.1 = Box3.north.2, south.2=input2.north}}{}

\wires[red]{BoxB={north.1=output2.south, south.2=BoxA.north.1}}{}

\wires[blue]{BoxA={north.2=BoxC.south.2}}{}


\node[coordinate] (cap1) at ($(Box3.north.1)+(0,2)$) {};

\node[coordinate] (cup1) at ($(cap1)+(1,0)$) {};

\wires[black, looseness=2]{cap1 ={south = Box3.north.1, north=cup1.north}}{}

\wires[red, looseness=2]{cup1 ={south = BoxB.south.1}}{}

\node[coordinate] (cap2) at ($(BoxB.north.2)+(0,1)$) {};

\node[coordinate] (cup2) at ($(cap2)+(1,0)$) {};

\wires[red, looseness=2]{cap2 ={south = BoxB.north.2, north=cup2.north}}{};

\wires[blue, looseness=2]{cup2 = {south = BoxC.south.1}}{};

\node[coordinate, label=left:{\scriptsize $\id_{QB} \otimes Qf$}] (label1) at ($(Box3.north.1)+(0,1)$){};

\node[coordinate, label=below:{\scriptsize $Qf \otimes \id_{QA}$}] (label2) at ($(Box3.north.2)+(1.5,0.2)$){};

\node[coordinate, label=right:{\scriptsize $\mu_{B,A}$}] (label3) at ($(BoxB.south.2)+(0.2,-0.4)$){};

\node[coordinate, label=right:{\scriptsize $Q(f \times \id_A)$}] (label4) at ($(BoxA.north.2)+(0.5,1)$){};

\node[coordinate, label=left:{\scriptsize $Q(f \times f)$}] (label5) at ($(BoxC.north.1)+(-0.1,0.7)$){};

\node[coordinate, label=right:{\scriptsize $Qf$}] (label6) at ($(Box4.north.2)+(0.5,0.4)$){};

\node[coordinate, label=left:{\scriptsize $Q(\id_B \times f)$}] (label7) at ($(BoxB.north.2)+(0.4,1.8)$){};

\end{tikzpicture}}
  \end{equation*}

Now, since $\mathfrak{C}^l$ contains all diagonals, considering the graph of $f$ yields a $\mathfrak{C}$-pullback 

\begin{equation*}
    \begin{tikzcd}
	A & {A\times A} && {B \times A} \\
	\\
	B &&& {B \times B}
	\arrow["{\Delta_A}", from=1-1, to=1-2]
	\arrow[""{name=0, anchor=center, inner sep=0}, "f"', from=1-1, to=3-1]
	\arrow["\lrcorner"{anchor=center, pos=0.125}, draw=none, from=1-1, to=3-4]
	\arrow["{f \times \id_A}", from=1-2, to=1-4]
	\arrow[""{name=1, anchor=center, inner sep=0}, "{\id_B \times f}", from=1-4, to=3-4]
	\arrow["{\Delta_B}"', from=3-1, to=3-4]
	\arrow["\Gamma"{description}, color={rgb,255:red,92;green,92;blue,214}, draw=none, from=0, to=1]
\end{tikzcd}.
\end{equation*}

The blue region in the diagram is the mate of $Q(\Gamma)$, and hence is $\mathcal{B}_{\Gamma}^{-1}$. The red component of the diagram is the mate of $\mu_{e_B, f^*}^{-1}$, which is $\mu_{e_B, f_!}$ by Proposition~\ref{proposition: tight transformation determination}. So that, too, is invertible (as $\mu_{e_B, f_!}$ is a companion commuter square in $\Qt{\mathcal{K}}$). The rest of the diagram simplifies through the triangle identities, and we conclude that $\mathcal {F}^r_f$ can be constructed from $\mu_0$ and the Beck-Chevalley 2-cell for $\Gamma$ as   the following pasting diagram in $\mathcal{K}$:

\begin{equation*}
    \begin{tikzcd}
	& {QB \otimes QA} && {QB \otimes QB} && {Q(B \times B)} & QB \\
	{\mathcal{F}^r_f \coloneqq} & {QB \otimes QA} && {Q(B \times A)} && {Q(B \times B)} & QB \\
	& {QB \otimes QA} && {Q(B \times A)} & {Q(A\  \times A)} & QA & QB \\
	& {QB \times QB} && {QA\otimes QA} & {Q(A \times A)} & QA & QB
	\arrow["{\id_{QB} \otimes \exists f}", from=1-2, to=1-4]
	\arrow[""{name=0, anchor=center, inner sep=0}, equals, from=1-2, to=2-2]
	\arrow["{\mu_{B,B}}", from=1-4, to=1-6]
	\arrow["{Q(\Delta_B)}", from=1-6, to=1-7]
	\arrow[""{name=1, anchor=center, inner sep=0}, equals, from=1-6, to=2-6]
	\arrow[equals, from=1-7, to=2-7]
	\arrow["{\mu_{B,A}}"', from=2-2, to=2-4]
	\arrow[equals, from=2-2, to=3-2]
	\arrow["{\exists(\id_B \times f)}", from=2-4, to=2-6]
	\arrow[""{name=2, anchor=center, inner sep=0}, equals, from=2-4, to=3-4]
	\arrow["{Q(\Delta_B)}"', from=2-6, to=2-7]
	\arrow[""{name=3, anchor=center, inner sep=0}, equals, from=2-7, to=3-7]
	\arrow["{\mu_{B,A}}", shift left, from=3-2, to=3-4]
	\arrow[""{name=4, anchor=center, inner sep=0}, "{\id_{QB} \times \exists f}"', from=3-2, to=4-2]
	\arrow["{Q(f \times \id_A)}"{pos=0.3}, shift left, from=3-4, to=3-5]
	\arrow["{Q(\Delta_A)}", shift left, from=3-5, to=3-6]
	\arrow[""{name=5, anchor=center, inner sep=0}, equals, from=3-5, to=4-5]
	\arrow["{\exists f}", shift left, from=3-6, to=3-7]
	\arrow[equals, from=3-7, to=4-7]
	\arrow["{Qf \otimes \id_{QA}}"', from=4-2, to=4-4]
	\arrow["{\mu_{A,A}}"', from=4-4, to=4-5]
	\arrow["{Q(\Delta_A)}"', from=4-5, to=4-6]
	\arrow["{\exists f}"', from=4-6, to=4-7]
	\arrow["{\mu_{e_B, f_!}^{-1}\ =\ \mathrm{mate}(\mu_{\id_B, f})^{-1}}"{description}, color={rgb,255:red,214;green,92;blue,92}, draw=none, from=0, to=1]
	\arrow["{\mathcal{B}_{\Gamma}}"{description}, color={rgb,255:red,92;green,92;blue,214}, draw=none, from=2, to=3]
	\arrow["{\mu_{f, \id_A}}"{description}, draw=none, from=4, to=5]
\end{tikzcd}
\end{equation*}

                    \end{proof}\end{lemma}

This concludes the retrieval of a $\mathfrak{C}$-regular hyperdoctrines from double functors. Putting Lemma~\ref{lemma: regular hyperdoctrine extends to double functor} and Lemma~\ref{lemma:Companion commuter implies hyperdoctrine} and together, we have:
\begin{theorem}
\label{MainTheorem}
\par{} Let $\mathfrak{C}$ be a cartesian adequate triple and $\mathcal{K}$ be a symmetric monoidal 2-category. Every $\mathfrak{C}$-regular 
 $\mathcal{K}$-hyperdoctrine $P$ can be extended to a lax symmetric monoidal pseudo double functor
  $P^{\bullet} \colon \Span{\mathfrak{C}}^{\op} \to \mathcal{K}$ whose monoidal laxators are companion commuter transformations. Conversely,
  if the class $\mathfrak{C}^l$ includes all diagonals, then the tight component of every such double functor which is strict is a $\mathfrak{C}$-regular $\mathcal{K}$-hyperdoctrine.  \end{theorem}

\begin{corollary}
    Let $\mathfrak{C}$ be a cartesian adequate triple for which $\mathfrak{C}^l$ includes all diagonals, $\mathcal{K}$ be a symmetric monoidal 2-category, and $P$ be a $\mathfrak{C}$-existential symmetric monoidal $\mathcal{K}$-structure. If the mate of $\mu^P_{\id_{\cod(f)}, f}$ is invertible in $\mathcal{K}$ for each morphism $f$ in $\mathsf{C}$, then $P$ is a $\mathfrak{C}$-regular $\mathcal{K}$-hyperdoctrine.
\end{corollary}

\begin{proof}
 By Lemma~\ref{corollary:Double functors from existential structures}, $P^{\bullet}$ is a pseudo double functor. Without loss of generality, we may assume that it is strict (c.f. Theorem 7.5 in \cite{grandis-1999-limits} and note that strictification does not change the tight components of the double functor). Under the hypotheses, Corollary~\ref{corollary: reduced commuter condition}  implies that  the monoidal laxator of $P^{\bullet}$ is a companion commuter transformation. Lemma~\ref{lemma:Companion commuter implies hyperdoctrine} then gives that $P^{\bullet}_0=P$ is a $\mathfrak{C}$-regular $\mathcal{K}$-hyperdoctrine.
\end{proof}

To conclude this section, a word on morphisms. If $\rho \colon P_1 \to P_2$ is a \emph{strong} morphism of  $\mathfrak{C}$-existential $\mathcal{K}$-structures (i.e., is part of a 1-morphism in $\ExistStr$), then clearly $\rho^{\bullet}$ (by construction) is a companion commuter transformation. Thus, $\ExistStr$ embeds into $\Ar^{\colax}(\Dbl)_{\mathrm{cc}}$.
Conversely, the tight component of any companion commuter transformation $\tau \colon Q_1 \to Q_2 \colon \Span{\mathsf{C}}^{\op} \to \Qt{\mathcal{K}}$ of strict double functors is a strong morphism of $\mathfrak{C}$-existential $\mathcal{K}$-structures by Proposition~\ref{proposition: tight transformation determination}.

\section{Discussion}
\label{section:Discussion}
\label{section:discussion}

\par{} We have shown that generalised regular hyperdoctrines correspond to lax symmetric monoidal pseudo double functors with companion commuter laxators, thus embedding into $\SM^{\lax}(\Ar^{\colax}(\Dbl))_{\mathrm{cc}}$. Since this is a 2-category, the result makes it simpler to consider the compositionality of regular hyperdoctrines. For instance, we can 2-functorially make adjustments to both the contexts and the semantics of the doctrine. Moreover, the result suggests one may be able to see regular hyperdoctrines as algebraic structures. While a traditional regular hyperdoctrine is not obviously one, lax symmetric monoidal pseudo double functors are 2-algebraic, and there is only the companion commuter property left to capture algebraically.

 \par{}These ideas can be applied to the double categorical approach to compositionality of systems, where a theory of systems
 is interpreted as a symmetric monoidal loose right module 
on a symmetric monoidal double category. Theorem 3.3.2.2 in \cite{libkind-2025-towards} gives that the restriction of a loose bimodule
along a lax symmetric monoidal pseudo double functor with companion commuter monoidal constraints is again symmetric monoidal. More precisely, if
\(P \colon  \mathsf{C}^{\op} \to  \mathbb {Q}\mathsf {t}(\mathcal {K})\) is a $\mathfrak{C}$-regular $\mathcal{K}$-hyperdoctrine, then we can
restrict the bimodule \(\mathbb {Q}\mathsf {t}(\mathcal {K})(-, -)\) along  \(P^\bullet  \colon \Span{\mathfrak{C}}^{\op} \to  \Qt{\mathcal{K}}\) on the right
 (and \(* \xrightarrow {*} \Qt{\mathcal{K}}\) on the left) to obtain a new systems theory \(\mathbb {Q}\mathsf {t}(\mathcal {K})(*, P^{\bullet })\). A system for this theory consists
 of a context \(\Gamma  \in  \mathsf {C}\), together with a predicate \(\phi  \in  P(\Gamma )\). Its interfaces are objects of \(\mathsf {C}\), and interactions are given by the logical action of \(\mathfrak{C}\)-spans. \par{}As a particular case of interest, let \(\mathcal {K} = \mathsf{Pos}\), \(T\) be a finite set of labels and \(\mathsf {C} = (\mathsf {Finset} \downarrow  T)^{^{\mathsf {op}}}\) be the free cartesian category on \(T\).
 If \(P \colon  \mathsf{C}^{\op} \to  \mathsf{Pos}\) is a conventional regular hyperdoctrine, then the restriction above provides a theory of systems
 with \(\mathbb {C}\mathrm{ospan}(\mathsf{Finset} \downarrow  T)\) as its symmetric monoidal double category of interfaces and interactions. Its loose part is then a hypergraph category in the sense of \cite{fong-2018-hypergraph};
 we can interpret the cospans of labelled finite sets as \emph{undirected wiring diagrams}, which can then be used to model the operadic compositionality of port-plugging systems \cite{spivak-2013-operad}. The construction in this paper therefore explains how substitution and existential quantification should be performed when undirected wiring diagrams are composed by nesting.
  This provides a way to reason about the specifications of such systems via a graphical regular logic --- it is very similar to the mechanism explained in \cite{fong-2018-graphical}, but here we use the free cartesian
  category on \(T\) rather than the free regular category on \(T\), at the cost of assuming inhabitedness in the logic (see Remark 4.12 in \cite{fong-2018-graphical}).

We conclude the paper by mentioning a few clear directions for further work.  The key features used in the paper are that spans form a cartesian Beck-Chevalley double category (in the sense of \cite{dawson-2010-span}),  that
\(\mathbb {Q}\mathsf {t}(\mathcal {K})\) is symmetric monoidal and has all companions and conjoints, and that the monoidal constraints of \(Q\) are companion commuters. In this
way, we could equally deem, for example,  a lax symmetric monoidal pseudo double functor \(\mathbb {R}\mathsf {el}(\mathcal {E})^{^{\mathsf {op}}} \to  \mathbb {Q}\mathsf {t}(\mathcal {K})\) from a double category of relations
with companion commuter constraints to be a type of hyperdoctrine; the Beck-Chevalley condition is handled at the level of the domain, and the Frob\"enius property by the monoidal structure on the double functor.

\par{} Regular ``double hyperdoctrines" in this sense whose double category of contexts is \(\Span{\mathfrak{C}}\) or \(\mathbb {R}\mathsf {el}(\mathcal {E})\),  are ``loosely trivial", in the sense that they are fully determined by their tight components. In both of those
cases we have strong tabulators, and can canonically factor any loose maps as a composite of companions and conjoints then run the argument in Corollary~\ref{corollary;tight transformations determined by tight component}. There are other cartesian Beck-Chevalley double categories of contexts one could take without strong tabulators, such as 
\(\mathsf {V}\mathbb{M}\mathrm{at}\) where \(V\) is a monoidal category with pullbacks and whose tensor product distributes over sums (see \cite{dawson-2010-span}), where the full double structure may not be recovered from the tight one. It is thus plausible one can obtain genuinely new notions of hyperdoctrine from such contexts, where the actions on predicates does not necessarily arise from substitution and quantification alone.

On a different axis, we observe that $\Qt{\mathcal{K}}$, could be replaced. We know for instance that for a regular $\mathsf{Cat}$-hyperdoctrine $P$, each object is mapped by $P^{\bullet}$ to a $T$-algebra, each tight morphism to a strong morphism of $T$-algebras, and each loose morphism to a colax morphism of $T$-algebras for $T$ the 2-monad on $\mathsf{Cat}$ whose pseudo algebras are symmetric monoidal categories; it thus makes sense to consider the semantics as lying in the double category $\mathbb{A}{\mathsf{lg}}(T; \mathsf{Cat})$ of $T$-algebras instead, whose tight morphisms are lax homomorphisms and whose loose arrows are colax homomorphisms (c.f. \cite{grandis2025multiplecategoriesgeneralquintets}). Note that strong morphisms of $T$-algebras are precisely the companiable ones, and that conjoint pairs correspond to doctrinal adjunctions.

More generally, the objects of predicates considered in this paper were symmetric pseudomonoids, but in practice logicians work with other algebraic structures such as Boolean algebras, Heyting algebras, or even toposes \cite{lambekToposes}. One could then replace $T$ with some other $2$-monad and consider ``double hyperdoctrines" taking values in $\mathbb{A}{\mathsf{lg}}(T; \mathcal{K})$. We chose to focus on regular logic in this paper, but in order to capture first-order or linear logic in a similar fashion, one would like to speak of doctrines for which
the fibres are monoidal-closed. This should be possible, mirroring Shulman's work on \emph{closed monoidal fibrations} in the fibrational setting \cite{shulman-2008-framed-bicats}.

The above discussion suggests there is a broader notion of regular hyperdoctrine to study, defined in terms of pseudo double functors $Q \colon \mathbb{C}{\mathrm{tx}}^{\op} \to \mathbb{D}$ with some additional structure that suitably interacts with the companions and conjoints of $\mathbb{D}$. The conceptual picture is that the tight component of \(Q\) moderates adjustments to contexts, while the loose components encode the
possible actions on predicates. This is in line with the zeitgeist of double categorical logic, where the tight dimension handles a theory and the loose
maps handle models (c.f. \cite{lambert-2024-cartesian}).This will be the subject of a future paper. 

Finally,
 a suitable double-categorical notion of \emph{generic predicate} (in the sense of tripos theory \cite{Tripos}) is desirable, so that higher-order logic can also be put into this double-categorical framework. A different approach to the role of double categories in regular logic was detailed by Hayato Nasu following similar motivations, with an accompanying type theory \cite{Nasu}. Connecting the two perspectives would also be valuable.

\nocite{*}\bibliographystyle{plain}\bibliography{paper.bib}







\section*{Appendix}
\label{section:appendix}
This appendix is dedicated to the proof of Lemma~\ref{lemma:BulletConstruction}. We will first verify that the map $(-)^{\bullet} \colon \ExistStr_{\mathrm{w}} \to \Ar^{\colax}(\mathrm{Dbl})$ defined in Section~\ref{section:DoctrinesAsDoublePseudofunctors} is well-defined, then verify its 2-functoriality. Since it is obviously cartesian, we're then done.

Recall that $P^{\bullet}$ acts on objects and tight arrows as $P$, and maps a span $X_1 \xleftarrow{x_1} X \xrightarrow{x_2} X$ to the composite $Px_1 \odot \exists^P x_2 = PX_1 \xrightarrow{Px_1} PX \xrightarrow{\exists^Px_2} PX_2$ (thought of as a loose quintet map). To facilitate the arguments that follow, we express the compositor, unitor, and action of $P^{\bullet}$ on squares in terms of string diagrams:


\begin{equation*}
 P^{\bullet}( \begin {tikzcd}[cramped]
	{Y_1} && {Y_2} \\
	\\
	{X_1} && {X_2}
	\arrow ["Y", "\shortmid "{marking}, from=1-1, to=1-3]
	\arrow [""{name=0, anchor=center, inner sep=0}, "{f_1}"', from=1-1, to=3-1]
	\arrow [""{name=1, anchor=center, inner sep=0}, "{f_2}", from=1-3, to=3-3]
	\arrow ["X"', "\shortmid "{marking}, from=3-1, to=3-3]
	\arrow ["\alpha "{description}, color={rgb,255:red,92;green,92;blue,214}, draw=none, from=0, to=1]
\end {tikzcd})  \hspace{0.2 cm} = \hspace{1 cm} \begin{tikzpicture}[baseline=6em, marker/.style={circle, fill, inner sep=1pt, text=white}]
\node[coordinate, label=below:{{\scriptsize $Pf_1$}}] (input1) at (0,0) {};
\node[coordinate, label=below:{{\scriptsize $Px_1$}}] (input2) at ($(input1)+(2,0)$) {};
\node[coordinate, label=below:{{\scriptsize $\exists^P x_2$}}] (input3) at ($(input2)+(3,0)$)  {};

\node[coordinate, label=above:{{\scriptsize $Py_1$}}] (output1) at (0,5) {};
\node[coordinate, label=above:{{\scriptsize $\exists^P y_2$}}] (output2) at ($(output1)+(1,0)$) {};
\node[coordinate,  label=above:{{\scriptsize $Pf_2$}}] (output3) at ($(output2)+(1, 0)$) {};

\node[box, box ports south=5, box ports north=5, minimum width=10 em] (Box1) at ($(input1)+(1, 2)$) {P=_{f_1x_1}^{y_1 \alpha}};
\node[box, box ports south=5, box ports north=5, minimum width=6 em] (Box2) at ($(output3)+(1, -1)$) {P=_{y_1 \alpha}^{f_2 x_2}};

\wires[black]{Box1 = {south.2 = input1.north, south.4 = input2.north, north.2=output1.south, north.5 = Box2.south.3}, Box2={north.1=output3.south}}{};


\node[coordinate] (cup1) at ($(Box2.south.1)+(-1,0)$) {};
\wires[black, looseness=1.5]{cup1 ={south = Box2.south.2, north = output2.south} }{}; 

\node[coordinate] (cap1) at ($(Box2.north.5)+(1,0)$) {};
\wires[black, looseness=2]{cap1 ={north = Box2.north.5, south=input3.north} }{};

\node[coordinate, label=above:{{\scriptsize $P\alpha$}}] (wirelabel1) at ($(Box2.south.3)+(0.20,-0.75)$) {};
\end{tikzpicture}
\end{equation*}

\begin{equation*}
 (P^{\bullet}_{\odot})_{X, X'}   = \hspace{1 cm} \begin{tikzpicture}[baseline=8em, marker/.style={circle, fill, inner sep=1pt, text=white}]
\node[coordinate, label=below:{{\scriptsize $Px_1$}}] (input1) at (0,0) {};
\node[coordinate, label=below:{{\scriptsize $\exists^P x_2$}}] (input2) at ($(input1)+(2,0)$) {};
\node[coordinate, label=below:{{\scriptsize $P x_2'$}}] (input3) at ($(input2)+(3,0)$)  {};
\node[coordinate, label=below:{{\scriptsize $\exists^P x_3'$}}] (input4) at ($(input3)+(2,0)$)  {};

\node[coordinate, label=above:{{\scriptsize $P(x_1 \chi)$}}] (output1) at (1.2,6) {};
\node[coordinate, label=above:{{\scriptsize $\exists^P (x_3'\chi')$}}] (output2) at ($(output1)+(4.5,0)$) {};

\node[box, box ports south=5, box ports north=5, minimum width=10 em] (Box1) at ($(input2)+(1.5, 2)$) {\mathcal{B}_{x_2, x_2'}^{\chi, \chi'}};
\node[box, box ports south=5, box ports north=5, minimum width=6 em] (Box2) at ($(Box1.north.2)+(-1.5, 1.5)$) {{\gamma^P_{x_1, \chi}}};
\node[box, box ports south=5, box ports north=5, minimum width=6 em] (Box3) at ($(Box1.north.4)+(1.5, 1.5)$) {{\gamma^{\exists^P}_{\chi', x_3'}}};

\wires[black]{Box1 = {south.1 = input2.north, south.5=input3.north, north.1 = Box2.south.4, north.5 = Box3.south.2}, Box2= {south.2 = input1.north, north.3 = output1.south}, Box3= {south.4=input4.north, north.3=output2.south}}{};



\end{tikzpicture}
\end{equation*}

\begin{equation*}
 (P^{\bullet}_e)_X \hspace{0.2 cm} = \hspace{0.5 cm} \begin{tikzpicture}[baseline=1em, marker/.style={circle, fill, inner sep=1pt, text=white}]
\node[coordinate, label=below:{{\scriptsize $P(\id_X)$}}] (input1) at (0,0) {};
\node[coordinate, label=below:{{\scriptsize $\exists^P (\id_X)$}}] (input2) at ($(input1)+(3,0)$) {};
\wires[black, looseness=2]{input1 ={north = input2.north} }{};

\wires[black]{}{};



\end{tikzpicture}
\end{equation*}

We will not use the fact that $L, P, F$ and $\rho$ are strict unless necessary, in order to suggest that the restriction of hyperdoctrines to 2-functorial ones is not an essential one (although relaxing this hypothesis requires working with double pseudofunctors and tight pseudotransformations instead).

\begin{proposition}
If $P \colon \mathsf{C}^{\op} \to \mathcal{K}$ is a $\mathfrak{C}$-existential $\mathcal{K}$-structure, then $P^{\bullet} \colon \Span{\mathfrak{C}}^{\op} \to \Qt{\mathcal{K}}$ is a pseudo double functor.
\end{proposition}

\begin{proof}
We verify all the conditions in Definition~\ref{def:DoublePseudofunctor} in turn.\\

 $\bullet$ \underline{Naturality of \(P^{\bullet }_{\odot }\)}:\\ 

Suppose given $
\begin{tikzcd}
	{Y_1} && {Y_2} && {Y_3} \\
	\\
	{X_1} && {X_2} && {X_3}
	\arrow[""{name=0, anchor=center, inner sep=0}, "Y"{inner sep=.8ex}, "\shortmid"{marking}, from=1-1, to=1-3]
	\arrow["{f_1}"', from=1-1, to=3-1]
	\arrow[""{name=1, anchor=center, inner sep=0}, "{Y'}"{inner sep=.8ex}, "\shortmid"{marking}, from=1-3, to=1-5]
	\arrow["{f_2}", from=1-3, to=3-3]
	\arrow["{f_3}", from=1-5, to=3-5]
	\arrow[""{name=2, anchor=center, inner sep=0}, "X"'{inner sep=.8ex}, "\shortmid"{marking}, from=3-1, to=3-3]
	\arrow[""{name=3, anchor=center, inner sep=0}, "{X'}"'{inner sep=.8ex}, "\shortmid"{marking}, from=3-3, to=3-5]
	\arrow["\alpha"{description}, color={rgb,255:red,92;green,92;blue,214}, draw=none, from=0, to=2]
	\arrow["\beta"{description}, color={rgb,255:red,92;green,92;blue,214}, draw=none, from=1, to=3]
\end{tikzcd}$ in $\Span{\mathfrak{C}}^{\op}$, where $X \odot X'$ and $Y \odot Y'$ are given by the $\mathfrak{C}$-pullbacks $
\begin{tikzcd}
	{X\odot X'} && {X'} & {Y\odot Y'} && {Y'} \\
	\\
	X && {X_2} & Y && {Y_2}
	\arrow["b", from=1-1, to=1-3]
	\arrow["a"', from=1-1, to=3-1]
	\arrow["\lrcorner"{anchor=center, pos=0.125}, draw=none, from=1-1, to=3-3]
	\arrow["{x_2'}", from=1-3, to=3-3]
	\arrow["d", from=1-4, to=1-6]
	\arrow["c"', from=1-4, to=3-4]
	\arrow["\lrcorner"{anchor=center, pos=0.125}, draw=none, from=1-4, to=3-6]
	\arrow["{y_2'}", from=1-6, to=3-6]
	\arrow["{x_2}"', from=3-1, to=3-3]
	\arrow["{y_2}"', from=3-4, to=3-6]
\end{tikzcd}$.

Then, splitting $\eta^{P(y_3'd)}$ and $\epsilon^{P(x_3'b)}$ via the way adjunctions compose and using the triangle identities yields

\begin{equation*}
(P^{\bullet}_{\odot})_{Y,Y'}^{-1} \circ P^{\bullet}(\alpha \mid \beta)=
\begin{tikzpicture}[baseline=7em, marker/.style={circle, fill, inner sep=1pt, text=white}]
\node[coordinate, label=below:{{\scriptsize $Pf_1$}}] (input1) at (0,0) {};
\node[coordinate, label=below:{{\scriptsize $Px_1$}}] (input2) at ($(input1)+(4,0)$) {};
\node[coordinate, label=below:{{\scriptsize $Pa$}}] (input3) at ($(input2)+(4,0)$)  {};
\node[coordinate, label=below:{{\scriptsize $\exists (x'_3 b)$}}] (input4) at ($(input3)+(2.3,0)$) {};

\node[coordinate, label=above:{{\scriptsize $Py_1$}}] (output1) at (0,5) {};
\node[coordinate, label=above:{{\scriptsize $\exists y_2$}}] (output2) at ($(output1)+(1,0)$) {};
\node[coordinate,  label=above:{{\scriptsize $P(y_2')$}}] (output3) at ($(output2)+(1.5,0)$) {};
\node[coordinate, label=above:{{\scriptsize $\exists y_3'$}}] (output4) at ($(output3)+(3.5,0)$) {};
\node[coordinate, label=above:{{\scriptsize $Pf_3$}}] (output5) at ($(output4)+(2,0)$) {};

\node[box, box ports south=5, box ports north=5, minimum width=25 em] (Box1) at ($(input1)+(4, 1)$) {=_{f_1 x_1 a}^{y_1 c (\beta \alpha)}};
\node[box, box ports south=5, box ports north=5] (Box2) at ($(Box1.north.2)+(1, 2.5)$) {=_{y_2 c}^{y_2'd}};
\node[box, box ports south=5, box ports north=5] (Box3) at ($(input2)+(4, 3.5)$) {=_{(y_3' d) (\beta \alpha)}^{f_3 (x_3'b)}};
\node[box, box ports south=5, box ports north=5] (Box4) at ($(Box2)+(2.5,-0.5)$) {\gamma^{\exists^P}_{y'_3, d}};

\wires[black]{Box1 = {north.1 = output1.south,  south.1 = input1.north, south.3 = input2.north, south.5 = input3.north, north.3 = Box2.south.4}}{};
\path (Box1.north.3) -- (Box2.south.4) node[pos=0.5, right] {\scriptsize $Pc$}; 
\wires[black]{Box2 = {north.1 = output3.south}}{}


\node[coordinate] (cup1) at ($(Box2.south.2)+(-2,0)$) {};
\wires[black, looseness=1.5]{Box2 ={south.2 = cup1.south} }{};
\wires[black, looseness=0.5] {cup1 = {north = output2.south}}{}
\path (cup1.south) -- (Box2.south.1) node[pos=.9, below] {\scriptsize $Py_2$};
\node[coordinate] (cup2) at ($(Box3.north.5)+(2,0)$) {};
\wires[black, looseness=1.5] {cup2 = {north = Box3.north.5, south =input4.north}}{}
\wires[black, looseness=2] {Box4 = {south.3 = Box3.south.2}}{}
\path (Box4.south.3) -- (Box3.south.2) node[pos=0.65 , below] {\scriptsize $P(y_3'd)$}; 

\node[coordinate] (cap2) at ($(Box3.south.1)+(-1,0)$) {};

\wires[black, looseness=3]{Box2 = {north.5 = Box4.north.2} }{};
\path (Box2.north.5) -- (Box4.north.2) node[pos=0.15 , above] {\scriptsize $Pd$}; 

\wires[black, looseness=1.4]{Box4 = {north.5 = output4.south}}{};
\wires[black, looseness=1.4]{Box3 = {south.4 = Box1.north.5, north.3 = output5.south}}{};
\path (Box3.south.4) -- (Box1.north.5) node[pos=0.6, right] {\scriptsize $P(\beta \alpha)$}; 
\end{tikzpicture}%
\end{equation*}

\begin{equation*}
    = \begin{tikzpicture}[baseline=7em, marker/.style={circle, fill, inner sep=1pt, text=white}]
\node[coordinate, label=below:{{\scriptsize $Pf_1$}}] (input1) at (0,0) {};
\node[coordinate, label=below:{{\scriptsize $Px_1$}}] (input2) at ($(input1)+(4,0)$) {};
\node[coordinate, label=below:{{\scriptsize $Pa$}}] (input3) at ($(input2)+(4,0)$)  {};
\node[coordinate, label=below:{{\scriptsize $\exists (x'_3 b)$}}] (input4) at ($(input3)+(2.3,0)$) {};

\node[coordinate, label=above:{{\scriptsize $Py_1$}}] (output1) at (0,5) {};
\node[coordinate, label=above:{{\scriptsize $\exists y_2$}}] (output2) at ($(output1)+(1,0)$) {};
\node[coordinate,  label=above:{{\scriptsize $P(y_2')$}}] (output3) at ($(output2)+(2,0)$) {};
\node[coordinate, label=above:{{\scriptsize $\exists y_3'$}}] (output4) at ($(output3)+(3,0)$) {};
\node[coordinate, label=above:{{\scriptsize $Pf_3$}}] (output5) at ($(output4)+(2,0)$) {};

\node[box, box ports south=5, box ports north=5, minimum width=25 em] (Box1) at ($(input1)+(4, 1)$) {=_{f_1 x_1 a}^{y_1 c (\beta \alpha)}};
\node[box, box ports south=5, box ports north=5] (Box2) at ($(Box1.north.2)+(1, 2)$) {=_{y_2 c}^{y_2'd}};
\node[box, box ports south=5, box ports north=5] (Box3) at ($(input2)+(3.5, 3.5)$) {=_{y_3' d (\beta \alpha)}^{f_3 (x_3'b)}};

\wires[black]{Box1 = {north.1 = output1.south,  south.1 = input1.north, south.3 = input2.north, south.5 = input3.north, north.3 = Box2.south.4}}{};
\path (Box1.north.3) -- (Box2.south.4) node[pos=0.5, right] {\scriptsize $Pc$}; 
\wires[black]{Box2 = {north.1 = output3.south}}{}


\node[coordinate] (cup1) at ($(Box2.south.2)+(-2,0)$) {};
\wires[black, looseness=1.5]{Box2 ={south.2 = cup1.south} }{};
\wires[black, looseness=0.5] {cup1 = {north = output2.south}}{}
\path (cup1.south) -- (Box2.south.1) node[pos=.9, below] {\scriptsize $Py_2$};
\node[coordinate] (cup2) at ($(Box3.north.5)+(2,0)$) {};
\wires[black, looseness=1.5] {cup2 = {north = Box3.north.5, south =input4.north}}{}

\node[coordinate] (cap1) at ($(Box2.north.5)+(1,0)$) {};
\node[coordinate] (cap2) at ($(Box3.south.1)+(-1,0)$) {};
\node[coordinate] (aux1) at ($(cap1)+(0,-1.5)$) {};
\wires[black, looseness=1.5]{Box2 = {north.5 = cap1.north} }{};
\wires[black, looseness=1.5]{cap1 = {south = aux1.north} }{};
\wires[black, looseness=1.4]{aux1 = {south = Box3.south.2}}{};
\wires[black, looseness=1.4]{cap2 = {south = Box3.south.1, north = output4.south}}{};
\wires[black, looseness=1.4]{Box3 = {south.4 = Box1.north.5, north.3 = output5.south}}{};
\path (Box3.south.4) -- (Box1.north.5) node[pos=0.5, right] {\scriptsize $P(\beta \alpha)$}; 
\path (Box3.south.2) -- (aux1.south) node[pos=-0.1, below] {\scriptsize $Pd$}; 
\end{tikzpicture}
\end{equation*}

 \begin{equation*}
     = \begin{tikzpicture}[baseline=7em, marker/.style={circle, fill, inner sep=1pt, text=white}]
\node[coordinate, label=below:{{\scriptsize $Pf_1$}}] (input1) at (0,0) {};
\node[coordinate, label=below:{{\scriptsize $Px_1$}}] (input2) at ($(input1)+(4,0)$) {};
\node[coordinate, label=below:{{\scriptsize $Pa$}}] (input3) at ($(input2)+(4,0)$)  {};
\node[coordinate, label=below:{{\scriptsize $\exists (x'_3 b)$}}] (input4) at ($(input3)+(1.3,0)$) {};

\node[coordinate, label=above:{{\scriptsize $Py_1$}}] (output1) at (0,6) {};
\node[coordinate, label=above:{{\scriptsize $\exists y_2$}}] (output2) at ($(output1)+(1,0)$) {};
\node[coordinate,  label=above:{{\scriptsize $P(y_2')$}}] (output3) at ($(output2)+(1,0)$) {};
\node[coordinate, label=above:{{\scriptsize $\exists y_3'$}}] (output4) at ($(output3)+(2,0)$) {};
\node[coordinate, label=above:{{\scriptsize $Pf_3$}}] (output5) at ($(output4)+(2,0)$) {};

\node[box, box ports south=5, box ports north=5, minimum width=25 em] (Box1) at ($(input1)+(4, 1)$) {=_{f_1 x_1 a}^{y_1 c (\beta \alpha)}};
\node[box, box ports south=5, box ports north=5] (Box2) at ($(Box1.north.2)+(2, 1.5)$) {=_{y_2 c}^{y_2'd}};
\node[box, box ports south=5, box ports north=5] (Box3) at ($(input2)+(2, 5.)$) {=_{y_3' d (\beta \alpha)}^{f_3 (x_3'b)}};

\wires[black]{Box1 = {north.1 = output1.south,  south.1 = input1.north, south.3 = input2.north, south.5 = input3.north, north.3 = Box2.south.4}}{};
\path (Box1.north.3) -- (Box2.south.4) node[pos=0.5, right] {\scriptsize $Pc$}; 
\wires[black]{Box2 = {north.1 = output3.south, north.5 = Box3.south.2}}{}
\wires[black]{Box3 = {north.3 = output5.south}}{}


\node[coordinate] (cup1) at ($(Box2.south.2)+(-2,0)$) {};
\wires[black, looseness=1.5]{Box2 ={south.2 = cup1.south} }{};
\wires[black, looseness=0.5] {cup1 = {north = output2.south}}{}
\path (cup1.south) -- (Box2.south.1) node[pos=.9, below] {\scriptsize $Py_2$};
\node[coordinate] (cup2) at ($(Box3.south.1)+(-1,0)$) {};
\wires[black, looseness=1]{cup2 ={south  = Box3.south.1, north = output4.south} }{};

\wires[black, looseness=1.4]{Box3 = {south.4 = Box1.north.5}}{};
\path (Box3.south.4) -- (Box1.north.5) node[pos=0.5, right] {\scriptsize $P(\beta \alpha)$}; 
\node[coordinate] (cap1) at ($(Box3.north.5)+(+2,0)$) {};
\wires[black, looseness=2]{cap1 ={north  =Box3.north.5, south = input4.north} }{};
\path (Box2.north.5) -- (Box3.south.2) node[pos=0.5, below] {\scriptsize  $Pd$}; 
\end{tikzpicture}
 \end{equation*}

\begin{equation*}
    = \begin{tikzpicture}[baseline=7em, marker/.style={circle, fill, inner sep=1pt, text=white}]
\node[coordinate, label=below:{{\scriptsize $Pf_1$}}] (input1) at (0,0) {};
\node[coordinate, label=below:{{\scriptsize $Px_1$}}] (input2) at ($(input1)+(4,0)$) {};
\node[coordinate, label=below:{{\scriptsize $Pa$}}] (input3) at ($(input2)+(4,0)$)  {};
\node[coordinate, label=below:{{\scriptsize $\exists  b$}}] (input4) at ($(input3)+(1.3,0)$) {};
\node[coordinate, label=below:{{\scriptsize $\exists  x_3'$}}] (input5) at ($(input4)+(1.3,0)$) {};

\node[coordinate, label=above:{{\scriptsize $Py_1$}}] (output1) at (0,6) {};
\node[coordinate, label=above:{{\scriptsize $\exists y_2$}}] (output2) at ($(output1)+(1,0)$) {};
\node[coordinate,  label=above:{{\scriptsize $P(y_2')$}}] (output3) at ($(output2)+(1,0)$) {};
\node[coordinate, label=above:{{\scriptsize $\exists y_3'$}}] (output4) at ($(output3)+(2,0)$) {};
\node[coordinate, label=above:{{\scriptsize $Pf_3$}}] (output5) at ($(output4)+(2,0)$) {};

\node[box, box ports south=5, box ports north=5, minimum width=25 em] (Box1) at ($(input1)+(4, 1)$) {=_{f_1 x_1 a}^{y_1 c (\beta \alpha)}};
\node[box, box ports south=5, box ports north=5] (Box2) at ($(Box1.north.2)+(2, 1.5)$) {=_{y_2 c}^{y_2'd}};
\node[box, box ports south=5, box ports north=5] (Box3) at ($(input2)+(2, 5)$) {=_{y_3' d (\beta \alpha)}^{f_3 (x_3'd)}};

\wires[black]{Box1 = {north.1 = output1.south,  south.1 = input1.north, south.3 = input2.north, south.5 = input3.north, north.3 = Box2.south.4}}{};
\path (Box1.north.3) -- (Box2.south.4) node[pos=0.5, right] {\scriptsize $Pc$}; 
\wires[black]{Box2 = {north.1 = output3.south, north.5 = Box3.south.2}}{}
\wires[black]{Box3 = {north.3 = output5.south}}{}


\node[coordinate] (cup1) at ($(Box2.south.2)+(-2,0)$) {};
\wires[black, looseness=1.5]{Box2 ={south.2 = cup1.south} }{};
\wires[black, looseness=0.5] {cup1 = {north = output2.south}}{}
\path (cup1.south) -- (Box2.south.1) node[pos=.9, below] {\scriptsize $Py_2$};
\node[coordinate] (cup2) at ($(Box3.south.1)+(-1,0)$) {};
\wires[black, looseness=1]{cup2 ={south  = Box3.south.1, north = output4.south} }{};

\wires[black, looseness=1.4]{Box3 = {south.4 = Box1.north.5}}{};
\path (Box3.south.4) -- (Box1.north.5) node[pos=0.6, left] {\scriptsize $P(\beta \alpha)$}; 
\node[coordinate] (cap1) at ($(Box3.north.4)+(1.5,0)$) {};
\wires[black, looseness=1.5]{cap1 ={north  =Box3.north.5, south = input4.north} }{};
\path (Box2.north.5) -- (Box3.south.2) node[pos=0.5, below] {\scriptsize  $Pd$}; 
\node[coordinate] (cap2) at ($(Box3.north.4)+(+2,0)$) {};
\wires[black, looseness=1.5]{cap2 ={north  =Box3.north.4, south = input5.north} }{};
\end{tikzpicture}
\end{equation*}

\begin{equation*}
    =\adjustbox{scale=0.9}{\begin{tikzpicture}[baseline=12em, marker/.style={circle, fill, inner sep=1pt, text=white}]
\node[coordinate, label=below:{{\scriptsize $Pf_1$}}] (input1) at (0,0) {};
\node[coordinate, label=below:{{\scriptsize $Px_1$}}] (input2) at ($(input1)+(2,0)$) {};
\node[coordinate, label=below:{{\scriptsize $Pa$}}] (input3) at ($(input2)+(3.5,0)$)  {};
\node[coordinate, label=below:{{\scriptsize $\exists  b$}}] (input4) at ($(input3)+(1.3,0)$) {};
\node[coordinate, label=below:{{\scriptsize $\exists  x_3'$}}] (input5) at ($(input4)+(1.3,0)$) {};

\node[coordinate, label=above:{{\scriptsize $Py_1$}}] (output1) at (0,10) {};
\node[coordinate, label=above:{{\scriptsize $\exists y_2$}}] (output2) at ($(output1)+(1,0)$) {};
\node[coordinate,  label=above:{{\scriptsize $P(y_2')$}}] (output3) at ($(output2)+(1,0)$) {};
\node[coordinate, label=above:{{\scriptsize $\exists y_3'$}}] (output4) at ($(output3)+(2,0)$) {};
\node[coordinate, label=above:{{\scriptsize $Pf_3$}}] (output5) at ($(output4)+(1.5,0)$) {};

\node[box, box ports south=5, box ports north=5, minimum width=10 em] (Box1) at ($(input1)+(1, 1)$) {=_{f_1 x_1 a}^{ y_1  \alpha}};
\node[box, box ports south=5, box ports north=5] (Box2) at ($(Box1.north.2)+(2, 1.5)$) {=_{y_2 \alpha}^{f_2 x_2}};
\node[box, box ports south=5, box ports north=5] (Box3) at ($(input2)+(3.65, 4.5)$) {=_{x_2 a}^{x_2' b}};
\node[box, box ports south=5, box ports north=5] (Box4) at ($(input2)+(2.5 , 6)$) {=_{f_2 x_2'}^{{y_2'\beta}}};
\node[box, box ports south=5, box ports north=5] (Box5) at ($(input2)+(4,8.5)$) {=_{f_2 x_2'}^{y_2'\beta}};

\wires[black]{Box1 = {north.1 = output1.south,  south.2= input1.north, south.4 = input2.north, north.4 = Box2.south.5}}{};
\wires[black]{Box4 = {south.1 = Box2.north.2, south.4 = Box3.north.1, north.1 = output3.south, north.4 = Box5.south.4}}{};
\wires[black]{Box5 = {north.1 = output5.south}}{};


\node[coordinate] (cup1) at ($(Box2.south.2)+(-2,0)$) {};
\wires[black, looseness=1.5]{Box2 ={south.2 = cup1.south} }{};
\wires[black, looseness=0.5] {cup1 = {north = output2.south}}{}
\path (cup1.south) -- (Box2.south.1) node[pos=.9, below] {\scriptsize $Py_2$};
\node[coordinate] (cup2) at ($(Box5.south.2)+(-1.5,0)$) {};
\wires[black, looseness=1.5]{cup2 ={south  =Box5.south.2, north = output4.south} }{};

 
\node[coordinate] (cap1) at ($(Box3.north.4)+(1.5,0)$) {};
\wires[black, looseness=1.5]{cap1 ={north  =Box3.north.5, south = input4.north} }{};
\node[coordinate] (cap2) at ($(Box2.north.5)+(1.25,0)$) {};
\wires[red, looseness=3]{cap2 ={north  =Box2.north.5} }{};
\path (cap2.north) -- (Box2.north.5) node[pos=-0.8, right] {\scriptsize  $\textcolor{red}{\ \ Px_2}$}; 
\node[coordinate] (aux1) at ($(cap2.south)+(1.25,0)$) {};
\wires[red, looseness=3]{aux1 ={south =cap2.south, north= Box3.south.1} }{};
\wires[black]{Box3={south.5 = input3.north}}{}
\node[coordinate] (cap3) at ($(Box5.north.5)+(2,0)$) {};
\wires[black, looseness=2]{cap3 ={north  =Box5.north.5, south = input5.north} }{};
\path (Box1.north.4) -- (Box2.south.5) node[pos=0.3, right] {\scriptsize  $P\alpha$}; 
\path (Box2.north.2) -- (Box4.south.1) node[pos=0.5, above] {\scriptsize  $Pf_2$};
\path (Box3.north.1) -- (Box4.south.4) node[pos=0.5, right] {\scriptsize  $Px_2'$};
\path (Box4.north.4) -- (Box5.south.4) node[pos=0.4, right] {\scriptsize  $P\beta$};
\end{tikzpicture}}
\end{equation*}

\begin{equation*}
    =\adjustbox{scale=0.9}{\begin{tikzpicture}[baseline=12em, marker/.style={circle, fill, inner sep=1pt, text=white}]
\node[coordinate, label=below:{{\scriptsize $Pf_1$}}] (input1) at (0,0) {};
\node[coordinate, label=below:{{\scriptsize $Px_1$}}] (input2) at ($(input1)+(2,0)$) {};
\node[coordinate, label=below:{{\scriptsize $Pa$}}] (input3) at ($(input2)+(3.5,0)$)  {};
\node[coordinate, label=below:{{\scriptsize $\exists  b$}}] (input4) at ($(input3)+(1.3,0)$) {};
\node[coordinate, label=below:{{\scriptsize $\exists  x_3'$}}] (input5) at ($(input4)+(1.3,0)$) {};

\node[coordinate, label=above:{{\scriptsize $Py_1$}}] (output1) at (0,10) {};
\node[coordinate, label=above:{{\scriptsize $\exists y_2$}}] (output2) at ($(output1)+(1,0)$) {};
\node[coordinate,  label=above:{{\scriptsize $P(y_2')$}}] (output3) at ($(output2)+(1,0)$) {};
\node[coordinate, label=above:{{\scriptsize $\exists y_3'$}}] (output4) at ($(output3)+(2,0)$) {};
\node[coordinate, label=above:{{\scriptsize $Pf_3$}}] (output5) at ($(output4)+(1.5,0)$) {};

\node[box, box ports south=5, box ports north=5, minimum width=10 em, color=olive] (Box1) at ($(input1)+(1, 1)$) {=_{f_1 x_1 a}^{ y_1  \alpha}};
\node[box, box ports south=5, box ports north=5, color=olive] (Box2) at ($(Box1.north.2)+(2, 1.5)$) {=_{y_2 \alpha}^{f_2 x_2}};
\node[box, box ports south=5, box ports north=5, color=blue] (Box3) at ($(input2)+(3.65, 4.5)$) {=_{x_2 a}^{x_2' b}};
\node[box, box ports south=5, box ports north=5] (Box4) at ($(input2)+(2.5 , 6.15)$) {=_{f_2 x_2'}^{y_2'\beta}};
\node[box, box ports south=5, box ports north=5] (Box5) at ($(input2)+(4,8.5)$) {=_{f_2 x_2'}^{y_2'\beta}};

\wires[olive]{Box1 = {north.1 = output1.south,  south.2= input1.north, south.4 = input2.north, north.4 = Box2.south.5}}{};
\wires[black]{Box4 = {south.4 = Box3.north.1, north.1 = output3.south, north.4 = Box5.south.4}}{};
\wires[olive]{Box2 = {north.2 = Box4.south.1}}{}
\wires[black]{Box5 = {north.1 = output5.south}}{};


\node[coordinate] (cup1) at ($(Box2.south.2)+(-2,0)$) {};
\wires[olive, looseness=1.5]{Box2 ={south.2 = cup1.south} }{};
\wires[olive, looseness=0.5] {cup1 = {north = output2.south}}{}
\path (cup1.south) -- (Box2.south.1) node[pos=.9, below] {\scriptsize $\textcolor{olive}{Py_2}$};
\node[coordinate] (cup2) at ($(Box5.south.2)+(-1.5,0)$) {};
\wires[black, looseness=1.5]{cup2 ={south  =Box5.south.2, north = output4.south} }{};

 
\node[coordinate] (cap1) at ($(Box3.north.4)+(1.5,0)$) {};
\wires[blue, looseness=1.5]{cap1 ={north  =Box3.north.5, south = input4.north} }{};
\node[coordinate] (cap2) at ($(Box2.north.5)+(1.25,0)$) {};
\wires[olive, looseness=3]{cap2 ={north  =Box2.north.5} }{};
\path (cap2.north) -- (Box2.north.5) node[pos=-0.8, right] {\scriptsize  $\textcolor{blue}{\ \ Px_2}$}; 
\node[coordinate] (aux1) at ($(cap2.south)+(1.25,0)$) {};
\wires[blue, looseness=3]{aux1 ={south =cap2.south, north= Box3.south.1} }{};
\wires[blue]{Box3={south.5 = input3.north}}{}
\node[coordinate] (cap3) at ($(Box5.north.5)+(2,0)$) {};
\wires[black, looseness=2]{cap3 ={north  =Box5.north.5} }{};
\wires[blue, looseness=2]{cap3 ={south = input5.north} }{};
\path (Box1.north.4) -- (Box2.south.5) node[pos=0.3, right] {\scriptsize  $\textcolor{olive}{P\alpha}$}; 
\path (Box2.north.2) -- (Box4.south.1) node[pos=0.5, above] {\scriptsize  $\textcolor{olive}{Pf_2}$};
\path (Box3.north.1) -- (Box4.south.4) node[pos=0.5, right] {\scriptsize  $Px_2'$};
\path (Box4.north.4) -- (Box5.south.4) node[pos=0.4, below] {\scriptsize  $P\beta$};
\end{tikzpicture}}
=(\textcolor{olive}{P^{\bullet}(\alpha)} \mid P^{\bullet}(\beta)) \textcolor{blue}{(P^\bullet_{\odot})_{X, X'}^{-1}}.
\end{equation*}

$\bullet$ \underline{Double associativity of $P^{\bullet}$:} Recall that $P^{\bullet}_{\odot}$ at composable spans is computed in terms of the Beck-Chevalley cell for the pullback defining span composites. Given composable spans \(X_1 \overset {X}{\mathrel {\mkern 3mu\vcenter {\hbox {$\shortmid $}}\mkern -10mu{\to }}} X_2 \overset {X'}{\mathrel {\mkern 3mu\vcenter {\hbox {$\shortmid $}}\mkern -10mu{\to }}} X_3 \overset {X''}{\mathrel {\mkern 3mu\vcenter {\hbox {$\shortmid $}}\mkern -10mu{\to }}} X_4\),  double associativity of $P^{\bullet}$ thus follows from observing that the pullback $R$ in the diagram 
\begin{equation*}
    \begin{tikzcd}
	R && {X'\odot X''} && {X''} \\
	\\
	{X\odot X'} && {X'} && {X_3} \\
	\\
	X && {X_2}
	\arrow[from=1-1, to=1-3]
	\arrow[from=1-1, to=3-1]
	\arrow["\lrcorner"{anchor=center, pos=0.125}, draw=none, from=1-1, to=3-3]
	\arrow[from=1-3, to=1-5]
	\arrow[from=1-3, to=3-3]
	\arrow["\lrcorner"{anchor=center, pos=0.125}, draw=none, from=1-3, to=3-5]
	\arrow[from=1-5, to=3-5]
	\arrow[from=3-1, to=3-3]
	\arrow[from=3-1, to=5-1]
	\arrow["\lrcorner"{anchor=center, pos=0.125}, draw=none, from=3-1, to=5-3]
	\arrow[from=3-3, to=3-5]
	\arrow[from=3-3, to=5-3]
	\arrow[from=5-1, to=5-3]
\end{tikzcd}
\end{equation*}
can equivalently be described as $(X \odot X') \odot X''$ or $X \odot (X' \odot X'')$, and taking Beck-Chevalley cells.\\

Naturality of $(P^{\bullet})_e$ and the unital coherence  and double unitality of $P^{\bullet}$ are trivial.
 
\end{proof}

\begin{proposition}
If
\begin{tikzcd}
	{\mathsf{C}_1^{\op}} && {\mathcal{K}_1} \\
	\\
	{\mathsf{C}_2^{\op}} && {\mathcal{K}_2}
	\arrow[""{name=0, anchor=center, inner sep=0}, "{P_1}", from=1-1, to=1-3]
	\arrow["{F^{\op}}"', from=1-1, to=3-1]
	\arrow["L", from=1-3, to=3-3]
	\arrow[""{name=1, anchor=center, inner sep=0}, "{P_2}"', from=3-1, to=3-3]
	\arrow["\rho", color={rgb,255:red,92;green,92;blue,214}, between={0.2}{0.8}, Rightarrow, from=0, to=1]
\end{tikzcd}
is a 1-morphism in $\ExistStr_{\mathrm{w}}$, then 
\begin{equation*}
\begin{tikzcd}
	{\Span{\mathsf{C}_1}^{\op}} && {\Qt{\mathcal{K}_1}} \\
	\\
	{\Span{\mathsf{C}_2}^{\op}} && {\Qt{\mathcal{K}_2}}
	\arrow[""{name=0, anchor=center, inner sep=0}, "{P_1^{\bullet}}", from=1-1, to=1-3]
	\arrow["{\Span{F}^{\op}}"', from=1-1, to=3-1]
	\arrow["L", from=1-3, to=3-3]
	\arrow[""{name=1, anchor=center, inner sep=0}, "{P_2^{\bullet}}"', from=3-1, to=3-3]
	\arrow["{\rho^{\bullet}}", color={rgb,255:red,92;green,92;blue,214}, between={0.2}{0.8}, Rightarrow, from=0, to=1]
\end{tikzcd}
\end{equation*}
is a 1-morphism in $\Ar^{\colax}(\mathrm{Dbl})$.
\end{proposition}

\begin{proof}
    We must show that $\rho^{\bullet}$ is a tight transformation $L P_1^{\bullet} \to P_2^{\bullet}\ \Span{F}^{\op}$.\\ 
    
    $\bullet$ \underline{Tight naturality of $\rho^{\bullet}$}: clear, since the tight component of $\rho^{\bullet}$ is just $\rho$.\\

 $\bullet$ \underline{Unital coherence of $\rho^{\bullet}$}: given an object $A \in \Span{\mathfrak{C}_1}^{\op}$, we have\\

 \begin{equation*}
  \frac{\textcolor{red}{(\Qt{L}\ {P_1^{\bullet}}_e)_A}}{\rho^{\bullet}_{e_A}}= \adjustbox{scale=0.7}{\begin{tikzpicture}[baseline=3em, marker/.style={circle, fill, inner sep=1pt, text=white}]
\node[box, box ports south=5, box ports north=5, minimum width=5cm, color=black] (Box1) at (0,0) {{\rho}^{-1}_{\id_A}};
\node[box, box ports south=2, box ports north=2, minimum width=2cm, color=black] (Box2) at ($(Box1)+(1.5, 2)$) {\rho_{\id_A}};

\node[coordinate, label=below:{{\scriptsize $\rho_A$}}]                          (input1) at ($(Box1.south.1)+(0,-1)$) {};
\node[coordinate, label={[text=black]below:{{\scriptsize $P_2 F^{\op} \id_A$}}}]             (input2) at ($(Box1.south.3)+(0,-1)$) {};
\node[coordinate, label={[text=black]below:{{\scriptsize $\exists^{P_2}F\id_A$}}}]            (input3) at ($(input2)+(3,0)$) {};

\node[coordinate, label={[text=black]above:{{\scriptsize $\rho_A$}}}]            (output1) at ($(Box2.north.1) + (0,1)$) {};

\node[coordinate] (cup1) at ($(Box2.south.1)+(-1,0)$) {};
\node[coordinate] (cup2) at ($(Box2.north.2)+(+1,0)$) {};

\node[coordinate] (cap1) at ($(cup1)+(-1,0)$) {};

 \wires[black, looseness=3]{cup1 = {south = Box2.south.1}}{};

 \wires[red, looseness=3]{cup1= {north = cap1.north}}{}

\wires[black]{cap1 = {south = Box1.north.2}}{}

\wires[black]{Box2 = {south.2 = Box1.north.5,  north.1 = output1.south}}{}

\wires[black, looseness=3]{Box2 = {north.2 = cup2.north}}{};

\wires[black]{Box1 = {south.1 = input1.north, south.3 = input2.north}}{}

\wires[black]{input3 = {north = cup2.south}}{}

\node[coordinate, label=above:{\scriptsize $LP_1A$}] (label1) at ($(Box1.north.1)+(0.2,0.5)$) {};

\node[coordinate, label=above:{\scriptsize $\rho_A$}] (label2) at ($(Box1.north.4)+(0.5,0.5)$) {};
\end{tikzpicture}} = \adjustbox{scale=0.7}{\begin{tikzpicture}[baseline=3em, marker/.style={circle, fill, inner sep=1pt, text=white}]
\

\node[coordinate, label=below:{{\scriptsize $\rho_A$}}]                          (input1) at (0,-1.5) {};
\node[coordinate, label={[text=black]below:{{\scriptsize $P_2 F^{\op} \id_A$}}}]             (input2) at ($(input1)+(2,0)$) {};
\node[coordinate, label={[text=black]below:{{\scriptsize $\exists^{P_2}F\id_A$}}}]            (input3) at ($(input2)+(2,0)$) {};

\node[coordinate, label={[text=black]above:{{\scriptsize $\rho_A$}}}]            (output1) at ($(0, 3.5)$) {};


\wires[black]{output1 = {south = input1.north}}{}

\wires[blue, looseness=3]{input2 = {north = input3.north}}{}
\end{tikzpicture}} = \frac{e_{\rho_A}}{\textcolor{blue}{(P_2^{\bullet}\  \Span{F}^{\op}_e)_A}} .  
 \end{equation*} 

  $\bullet$ \underline{Compositional coherence of \(\rho^{\bullet }\)}:\\

It is easier to work with the inverse of the Beck-Chevalley cells instead, so we must show that
  \begin{equation*}
\frac{\textcolor{red}{L{\mathcal{B}^1}^{-1}}}{\textcolor{blue}{\rho^{\bullet}_{X \odot X'}}} = \frac{\textcolor{violet}{\rho^{\bullet}_X \ \mid \ \rho^{\bullet}_{X'}}}{\textcolor{olive}{{\mathcal{B}^2_F}^{-1}}},
  \end{equation*}
for any composable $\mathfrak{C}$-spans $X, X'$, where $\mathcal{B}^i$ refers to  Beck-Chevalley cells for the $\mathfrak{C}_i$-existential structure $P_i$. Thus, if we write 
\begin{equation*}
    \begin{tikzcd}
	{P_2FX_2} & {P_2FX'} \\
	{P_2FX} & {P_2F(X\odot X')}
	\arrow[""{name=0, anchor=center, inner sep=0}, "{P_2F(x_2')}", from=1-1, to=1-2]
	\arrow["{P_2Fx_2}"', from=1-1, to=2-1]
	\arrow["{P_2F\chi'}", from=1-2, to=2-2]
	\arrow[""{name=1, anchor=center, inner sep=0}, "{P_2F\chi}"', from=2-1, to=2-2]
	\arrow["\alpha"{description}, shift left, draw=none, from=0, to=1]
\end{tikzcd}
\end{equation*}
for the 2-cell whose mate is $(\mathcal{B}^2_{Fx_2, Fx_2'})^{-1}$, i.e., the image of the pullback of $x_2' \in \mathfrak{C}^r$ along $x_2 \in \mathfrak{C}^l$ under $P_2F$, then we have:\footnote{Recall that $F$ is a morphism of adequate triples.}

\begin{equation*}
    \frac{\textcolor{violet}{\rho^{\bullet}_X \ \mid \ \rho^{\bullet}_{X'}}}{\textcolor{teal}{{\mathcal{B}^2_F}^{-1}}} = \adjustbox{scale=0.7}{\begin{tikzpicture}[baseline=7em, marker/.style={circle, fill, inner sep=1pt, text=white}]
\node[box, box ports south=5, box ports north=5, minimum width=5cm, color=violet] (Box1) at (0,0) {\rho^{-1}_{x_1}};
\node[box, box ports south=2, box ports north=2, minimum width=2cm, color=violet] (Box2) at ($(Box1.north.4)+(0.5, 1.5)$) {\rho_{x_2}};
\node[box, box ports south=5, box ports north=5, minimum width=5cm, color=violet] (Box3) at ($(Box2.north.2)+(1, 1.5)$) {\rho^{-1}_{x_2'}};
\node[box, box ports south=2, box ports north=2, minimum width=2cm, color=violet] (Box4) at ($(Box3)+(1.5, 2)$) {\rho_{x_3'}};

\node[box, box ports south=1, box ports north=1, minimum width=2cm, color=teal] (Box5) at ($(Box1)+(5.5, 0)$) {P_2 F(\alpha)};
\node[box, box ports south=2, box ports north=1, minimum width=2cm, color=teal] (Box6) at ($(Box5.south)+(0, -1.5)$)   {\gamma^{P_2 F}_{x_2, \chi}};
\node[box, box ports south=1, box ports north=2, minimum width=2cm, color=teal] (Box7) at ($(Box5.north)+(0, 1.5)$) {{\gamma^{P_2 F}}^{-1}_{x_2', \chi'}};

\node[coordinate, label=below:{{\scriptsize $\rho_{X_1}$}}]                          (input1) at ($(Box1.south.1)+(0,-3)$) {};
\node[coordinate, label=below:{{\scriptsize $P_2 F x_1$}}]                           (input2) at ($(input1)+(4,0)$) {};
\node[coordinate, label={[text=teal]below:{{\scriptsize $P_2 F \chi$}}}]             (input3) at ($(input2)+(4,0)$) {};
\node[coordinate, label={[text=teal]below:{{\scriptsize $\exists^{P_2} F \chi'$}}}]  (input4) at ($(input3)+(1.5,0)$) {};
\node[coordinate, label=below:{{\scriptsize $\exists^{P_2} F x_3'$}}]                (input5) at ($(input4)+(1.5,0)$) {};

\node[coordinate, label={[text=violet]above:{{\scriptsize $L P_1 x_1$}}}]            (output1) at ($(Box1.north.1) + (0,7)$) {};
\node[coordinate, label={[text=violet]above:{{\scriptsize $L \exists^{P_1} x_2$}}}]  (output2) at ($(output1)+(1.5,0)$) {};
\node[coordinate, label={[text=violet]above:{{\scriptsize $L P_1 x_2'$}}}]           (output3) at ($(output2)+(1.5,0)$) {};
\node[coordinate, label={[text=violet]above:{{\scriptsize $L \exists^{P_1} x_3'$}}}] (output4) at ($(output3)+(1.5,0)$) {};
\node[coordinate, label={[text=violet]above:{{\scriptsize $\rho_{X_3'}$}}}]          (output5) at ($(output4)+(1.5,0)$) {};

\wires[violet]{Box1 = {south.1 = input1.north, south.5 = input2.north, north.1 = output1.south, north.5 = Box2.south.2}}{};
\node[coordinate] (cup1) at ($(Box2.south.1)+(-1.5,0)$) {};
\wires[violet, looseness=1.5]{Box2 ={south.1 = cup1.south}}{};
\wires[violet]{cup1 = {north = output2.south}}{};
\wires[violet]{Box3 = {south.1 = Box2.north.1, north.1 = output3.south, north.5 = Box4.south.2}};
\node[coordinate] (cup2) at ($(Box4.south.1)+(-1.5,0)$) {};
\wires[violet, looseness=1.5]{Box4 ={south.1 = cup2.south}}{};
\wires[violet]{cup2 = {north = output4.south}}{};
\wires[violet]{Box4 = {north.1 = output5.south}};
\node[coordinate] (cup3) at ($(Box4.north.2)+(4,0)$) {};
\wires[violet, looseness=1.5]{cup3 ={north = Box4.north.2, south = input5.north}}{};
\node[coordinate] (cup4) at ($(Box2.north.2)+(1.5,0)$) {};
\node[coordinate] (line1) at ($(cup4.south)+(0,-2.5)$) {};
\wires[violet, looseness=1.5]{cup4 ={north = Box2.north.2, south = line1.north}}{};

\wires[teal]{Box6 = {south.2 = input3.north, north.1 = Box5.south.1}};
\node[coordinate] (cup5) at ($(Box6.south.1)+(-1.5,0)$) {};
\wires[teal, looseness=2]{cup5 ={south = Box6.south.1, north = line1.south}}{};
\wires[teal]{Box7 = {north.1 = Box3.south.5, south.1 = Box5.north.1}};
\node[coordinate] (cup6) at ($(Box7.north.2)+(1.5,0)$) {};
\wires[teal, looseness=2]{cup6 ={north = Box7.north.2, south = input4.north}}{};

\path (Box1.north.5) -- (Box2.south.2) node[pos=0.5, right] [text=violet]{\scriptsize $\rho_{X_1}$};
\path (Box2.north.1) -- (Box3.south.1) node[pos=0.5, right] [text=violet]{\scriptsize $\rho_{X_2}$};
\path (Box7.north.1) -- (Box3.south.5) node[pos=0.5, left] [text=teal]{\scriptsize $P_2 Fx_2'$};
\path (Box5.north.1) -- (Box7.south.1) node[pos=0.5, left] [text=teal]{\scriptsize $P_2 F(x_2'\chi')$};
\path (Box6.north.1) -- (Box5.south.1) node[pos=0.5, left] [text=teal]{\scriptsize $P_2 F(x_2\chi)$};
\end{tikzpicture}}
\end{equation*}

\begin{equation*}
    = \adjustbox{scale=0.6}{\begin{tikzpicture}[baseline=14em, marker/.style={circle, fill, inner sep=1pt, text=white}]
\node[box, box ports south=5, box ports north=5, minimum width=5cm, color=black] (Box1) at (0,0) {\rho^{-1}_{x_1}};
\node[box, box ports south=2, box ports north=2, minimum width=2cm, color=black] (Box2) at ($(Box1.north.4)+(0.5, 1.5)$) {\rho_{x_2}};

\node[box, box ports south=2, box ports north=1, minimum width=2cm, color=black] (Box6) at ($(Box2.north)+(1, 1.5)$)   {\gamma^{P_2 F}_{x_2, \chi}};
\node[box, box ports south=1, box ports north=1, minimum width=2cm, color=black] (Box5) at ($(Box6.north)+(0, 1.5)$) {P_2 F(\alpha)};
\node[box, box ports south=1, box ports north=2, minimum width=2cm, color=black] (Box7) at ($(Box5.north)+(0, 1.5)$) {{\gamma^{P_2 F}}^{-1}_{x_2', \chi'}};

\node[box, box ports south=5, box ports north=5, minimum width=5cm, color=black] (Box3) at ($(Box7.north)+(0.5, 1.5)$) {\rho^{-1}_{x_2'}};
\node[box, box ports south=2, box ports north=2, minimum width=2cm, color=black] (Box4) at ($(Box3.north)+(1.5, 1.5)$) {\rho_{x_3'}};

\node[coordinate, label=below:{{\scriptsize $\rho_{X_1}$}}]                          (input1) at ($(Box1.south.1)+(0,-1)$) {};
\node[coordinate, label=below:{{\scriptsize $P_2 F x_1$}}]                           (input2) at ($(input1)+(4,0)$) {};
\node[coordinate, label={[text=black]below:{{\scriptsize $P_2 F \chi$}}}]             (input3) at ($(input2)+(1,0)$) {};
\node[coordinate, label={[text=black]below:{{\scriptsize $\exists^{P_2} F \chi'$}}}]  (input4) at ($(input3)+(1.5,0)$) {};
\node[coordinate, label=below:{{\scriptsize $\exists^{P_2} F x_3'$}}]                (input5) at ($(input4)+(2,0)$) {};

\node[coordinate, label={[text=black]above:{{\scriptsize $L P_1 x_1$}}}]            (output1) at ($(Box1.north.1) + (0,12.5)$) {};
\node[coordinate, label={[text=black]above:{{\scriptsize $L \exists^{P_1} x_2$}}}]  (output2) at ($(output1)+(1.5,0)$) {};
\node[coordinate, label={[text=black]above:{{\scriptsize $L P_1 x_2'$}}}]           (output3) at ($(output2)+(1.5,0)$) {};
\node[coordinate, label={[text=black]above:{{\scriptsize $L \exists^{P_1} x_3'$}}}] (output4) at ($(output3)+(1.5,0)$) {};
\node[coordinate, label={[text=black]above:{{\scriptsize $\rho_{X_3'}$}}}]          (output5) at ($(output4)+(1.5,0)$) {};

\wires[black]{Box1 = {south.1 = input1.north, south.5 = input2.north, north.1 = output1.south, north.5 = Box2.south.2}}{};
\node[coordinate] (cup1) at ($(Box2.south.1)+(-1.5,0)$) {};
\wires[black, looseness=1.5]{Box2 ={south.1 = cup1.south}}{};
\wires[black]{cup1 = {north = output2.south}}{};
\wires[black]{Box3 = {south.1 = Box2.north.1, north.1 = output3.south, north.5 = Box4.south.2}};
\node[coordinate] (cup2) at ($(Box4.south.1)+(-1.5,0)$) {};
\wires[black, looseness=1.5]{Box4 ={south.1 = cup2.south}}{};
\wires[black]{cup2 = {north = output4.south}}{};
\wires[black]{Box4 = {north.1 = output5.south}};
\node[coordinate] (cup3) at ($(Box4.north.2)+(1.5,0)$) {};
\wires[black, looseness=2]{cup3 ={north = Box4.north.2, south = input5.north}}{};
\node[coordinate] (line1) at ($(Box2.north.2)+(0,0.6)$) {};
\wires[black]{Box2 = {north.2 = line1.south}};

\wires[black]{Box6 = {south.1 = line1.north, south.2 = input3.north, north.1 = Box5.south.1}};
\wires[black]{Box7 = {north.1 = Box3.south.2, south.1 = Box5.north.1}};
\node[coordinate] (cup6) at ($(Box7.north.2)+(1.5,0)$) {};
\wires[black, looseness=1.5]{cup6 ={north = Box7.north.2, south = input4.north}}{};

\path (Box1.north.5) -- (Box2.south.2) node[pos=0.5, left] [text=black]{\scriptsize $\rho_{X_1}$};
\path (Box2.north.1) -- (Box3.south.1) node[pos=0.5, left] [text=black]{\scriptsize $\rho_{X_2}$};
\path (Box7.north.1) -- (Box3.south.2) node[pos=0.5, right] [text=black]{\scriptsize $P_2 Fx_2'$};
\path (Box5.north.1) -- (Box7.south.1) node[pos=0.5, right] [text=black]{\scriptsize $P_2 F(x_2'\chi')$};
\path (Box6.north.1) -- (Box5.south.1) node[pos=0.5, right] [text=black]{\scriptsize $P_2 F(x_2\chi)$};
\end{tikzpicture}}
\end{equation*}

On the other hand, using the triangle identities and that  $\rho$, $L\eta^{P_1(x_3' \chi')}$ and $\epsilon^{P_2}_F$ preserve compositors, we have
\begin{equation*}
  \frac{\textcolor{red}{L{\mathcal{B}^1}^{-1}}}{\textcolor{blue}{\rho^{\bullet}_{X \odot X'}}}   =\adjustbox{scale=0.6}{\begin{tikzpicture}[baseline=3em, marker/.style={circle, fill, inner sep=1pt, text=white}]
\node[box, box ports south=5, box ports north=5, minimum width=5cm, color=red] (Box1) at (0,0) {{\gamma^{ L P_1}}^{-1}_{x_1, \chi}};
\node[box, box ports south=1, box ports north=2, minimum width=2cm, color=red] (Box2) at ($(Box1)+(4, 0)$) {{\gamma^{L \exists^{P_1}}}^{-1}_{\chi',x_3'}};
\node[box, box ports south=2, box ports north=1, minimum width=2cm, color=red] (Box3) at ($(Box1)+(1.5, 2)$) {\gamma^{L P_1}_{x_2, \chi}};
\node[box, box ports south=1, box ports north=1, minimum width=2cm, color=red] (Box4) at ($(Box3)+(0, 2)$) {LP_1(\alpha)};
\node[box, box ports south=1, box ports north=2, minimum width=2cm, color=red] (Box5) at ($(Box4)+(0, 2)$) {{\gamma^{L P_1}}^{-1}_{x_2', \chi'}};

\node[box, box ports south=6, box ports north=6, minimum width=6cm, color=blue] (Box6) at ($(Box1)+(4.5, -4)$) {\rho^{-1}_{x_1 \chi}};
\node[box, box ports south=2, box ports north=2, minimum width=2cm, color=blue] (Box7) at ($(Box6)+(2, 2.5)$) {\rho_{x_3' \chi'}};

\node[box, box ports south=2, box ports north=1, minimum width=3cm, color=blue] (Box8) at ($(Box6)+(1.5, -2)$) {\gamma^{P_2 F}_{x_1, \chi}};
\node[box, box ports south=2, box ports north=1, minimum width=3cm, color=blue] (Box9) at ($(Box8)+(3.5, 0)$) {\gamma^{\exists^{P_2} F}_{x_3', \chi'}};

\node[coordinate, label={[text=blue]below:{{\scriptsize $\rho_{X_1}$}}}]                          (input1) at ($(Box6.south.1)+(0,-3)$) {};
\node[coordinate, label={[text=blue]below:{{\scriptsize $P_2 F x_1$}}}]              (input2) at ($(Box8.south.1)+(0,-1)$) {};
\node[coordinate, label={[text=blue]below:{{\scriptsize $P_2 F \chi$}}}]             (input3) at ($(input2)+(1.5,0)$) {};
\node[coordinate, label={[text=blue]below:{{\scriptsize $\exists^{P_2} F \chi'$}}}]  (input4) at ($(input3)+(2,0)$) {};
\node[coordinate, label={[text=blue]below:{{\scriptsize $\exists^{P_2} F x_3'$}}}]   (input5) at ($(input4)+(1.5,0)$) {};

\node[coordinate, label={[text=red]above:{{\scriptsize $L P_1 x_1$}}}]            (output1) at ($(Box1.north.1) + (0,7)$) {};
\node[coordinate, label={[text=red]above:{{\scriptsize $L \exists^{P_1} x_2$}}}]  (output2) at ($(output1)+(1.5,0)$) {};
\node[coordinate, label={[text=red]above:{{\scriptsize $L P_1 x_2'$}}}]           (output3) at ($(output2)+(1.5,0)$) {};
\node[coordinate, label={[text=red]above:{{\scriptsize $L \exists^{P_1} x_3'$}}}] (output4) at ($(output3)+(3.5,0)$) {};
\node[coordinate, label={[text=red]above:{{\scriptsize $\rho_{X_3'}$}}}]          (output5) at ($(output4)+(1.5,0)$) {};

\node[coordinate] (cup1) at ($(Box5.north)+(2,0)$) {};
\node[coordinate] (cup2) at ($(Box3.south)+(-2,0)$) {};
\node[coordinate] (cup3) at ($(Box7.north)+(3,0.5)$) {};
\node[coordinate] (cup4) at ($(Box7.south)+(-2.5,-0.5)$) {};
\node[coordinate] (cup5) at ($(Box7.south.1)+(0,-0.5)$) {};
\node[coordinate] (cup6) at ($(Box7.north.2)+(0, 0.5)$) {};
\node[coordinate] (line1) at ($(output5)+(0,-8)$) {};
\node[coordinate] (line2) at ($(Box6.south)+(5,0)$) {};

\wires[red]{Box1 = {north.1 = output1.south, north.5 = Box3.south.2}}{};
\wires[red]{Box2 = {north.1 = cup1.south, north.2 = output4.south}}{};
\wires[red, looseness=1.5]{Box3 = {south.1 = cup2.south, north.1 = Box4.south}}{};
\wires[red, looseness=1.5]{Box5 = {south.1 = Box4.north, north.1 = output3.south, north.2 = cup1.north}}{};
\wires[red]{output2 = {south = cup2.north}};
\wires[red]{output5 = {south = line1.north}};

\wires[blue]{Box6 = {north.1 = Box1.south.5, north.6 = Box7.south.2, south.1 = input1.north, south.5 = Box8.north}}{};
\wires[blue]{Box7 = {north.1 = line1.south, north.2 = cup6.south, south.1 = cup5.north, south.2 = Box6.north.6}}{};
\wires[blue]{Box2 = {south = cup4.north}}{};
\wires[blue]{Box9 = {north = cup3.south}}{};
\wires[blue, looseness=1.5]{cup4 = {south = cup5.south}}{};
\wires[blue, looseness=2]{cup3 = {north = cup6.north}}{};

\wires[blue]{Box8 = {south.1 = input2.north, south.2 = input3.north}}{};
\wires[blue]{Box9 = {south.1 = input4.north, south.2 = input5.north}}{};

\path (Box3.north) -- (Box4.south) node[pos=0.5, left] [text=red]{\scriptsize $LP_1(x_2 \chi)$};
\path (Box4.north) -- (Box5.south) node[pos=0.5, left] [text=red]{\scriptsize $LP_1(x_2' \chi')$};
\path (Box1.north.5) -- (Box3.south.2) node[pos=0.5, right] [text=red]{\scriptsize $LP_1\chi$};
\path (cup1.south) -- (Box2.north.1) node[pos=0.5, left] [text=red]{\scriptsize $L\exists^{P_1}\chi'$};
\path (cup1.south) -- (Box2.north.1) node[pos=0.5, left] [text=red]{\scriptsize $L\exists^{P_1}\chi'$};

\path (Box1.south.5) -- (Box6.north.1) node[pos=0.5, left] [text=blue]{\scriptsize $LP_1(x_1 \chi)$};
\path (Box2.south) -- (cup4.north) node[pos=0.5, left] [text=blue]{\scriptsize $L\exists^{P_1}(x_3' \chi')$};
\path (Box7.south.1) -- (cup5.north) node[pos=0.5, left] [text=blue]{\scriptsize $LP_1(x_3' \chi')$};
\path (Box7.south.2) -- (Box6.north.6) node[pos=0.5, right] [text=blue]{\scriptsize $\rho_{X\odot X'}$};
\path (Box7.north.2) -- (cup6.south) node[pos=0.5, right] [text=blue]{\scriptsize $P_2F(x_3'\chi')$};
\path (Box8.north) -- (Box6.south.5) node[pos=0.5, left] [text=blue]{\scriptsize $P_2F(x_1 \chi)$};
\path (Box9.north) -- (line2.south) node[pos=0.5, left] [text=blue]{\scriptsize $\exists^{P_2}F(x_3'\chi')$};
\end{tikzpicture}}
\end{equation*}

\begin{equation*}
    = \adjustbox{scale=0.6}{\begin{tikzpicture}[baseline=7em, marker/.style={circle, fill, inner sep=1pt, text=white}]
\node[box, box ports south=5, box ports north=5, minimum width=5cm, color=black] (Box1) at (0,0) {{\gamma^{ L P_1}}^{-1}_{x_1, \chi}};
\node[box, box ports south=2, box ports north=1, minimum width=2cm, color=black] (Box3) at ($(Box1)+(1.5, 2)$) {\gamma^{L P_1}_{x_2, \chi}};
\node[box, box ports south=1, box ports north=1, minimum width=2cm, color=black] (Box4) at ($(Box3)+(0, 2)$) {LP_1(\alpha)};
\node[box, box ports south=1, box ports north=2, minimum width=2cm, color=black] (Box5) at ($(Box4)+(0, 2)$) {{\gamma^{L P_1}}^{-1}_{x_2', \chi'}};

\node[box, box ports south=2, box ports north=2, minimum width=2cm, color=black] (Box2) at ($(Box1)+(7.5, 4)$) {{\gamma^{P_2 F}_{x_3', \chi'}}^{-1}};
\node[box, box ports south=2, box ports north=2, minimum width=2cm, color=black]  (Box6) at ($(Box2)+(-1, -2)$) {\rho_{x_3' \chi'}};
\node[box, box ports south=2, box ports north=2, minimum width=2cm, color=black] (Box7) at ($(Box6)+(-1, -2)$) {\gamma^{L P_2}_{x_3', \chi'}};

\node[box, box ports south=3, box ports north=3, minimum width=3cm, color=black] (Box8) at ($(Box1)+(3, -2.5)$) {\gamma^{L P_1}_{x_1, \chi}};
\node[box, box ports south=4, box ports north=4, minimum width=4cm, color=black] (Box9) at ($(Box8)+(2.5, -2)$) {\rho^{-1}_\chi};
\node[box, box ports south=3, box ports north=3, minimum width=3cm, color=black] (Box10) at ($(Box8)+(0, -4)$) {\rho^{-1}_{x_1}};

\node[coordinate, label=below:{{\scriptsize $\rho_{X_1}$}}]                          (input1) at ($(Box10.south.1)+(0,-1)$) {};
\node[coordinate, label={[text=black]below:{{\scriptsize $P_2 F^{\op} x_1$}}}]             (input2) at ($(Box10.south.3)+(0,-1)$) {};
\node[coordinate, label={[text=black]below:{{\scriptsize $P_2 F^{\op} \chi$}}}]            (input3) at ($(input2)+(3,0)$) {};
\node[coordinate, label={[text=black]below:{{\scriptsize $\exists^{P_2} F \chi'$}}}]  (input4) at ($(input3)+(2,0)$) {};
\node[coordinate, label={[text=black]below:{{\scriptsize $\exists^{P_2} F x_3'$}}}]   (input5) at ($(input4)+(1.2,0)$) {};

\node[coordinate, label={[text=black]above:{{\scriptsize $L P_1 x_1$}}}]            (output1) at ($(Box1.north.1) + (0,7)$) {};
\node[coordinate, label={[text=black]above:{{\scriptsize $L \exists^{P_1} x_2$}}}]  (output2) at ($(output1)+(1.5,0)$) {};
\node[coordinate, label={[text=black]above:{{\scriptsize $L P_1 x_2'$}}}]           (output3) at ($(output2)+(1.5,0)$) {};
\node[coordinate, label={[text=black]above:{{\scriptsize $L \exists^{P_1} x_3'$}}}] (output4) at ($(output3)+(3,0)$) {};
\node[coordinate, label={[text=black]above:{{\scriptsize $\rho_{X_3'}$}}}]          (output5) at ($(output4)+(2,0)$) {};

\node[coordinate] (cup1) at ($(Box5.north)+(2,0)$) {};
\node[coordinate] (cup2) at ($(Box3.south)+(-2,0)$) {};
\node[coordinate] (cup3) at ($(Box2.north)+(2.7,0)$) {};
\node[coordinate] (cup4) at ($(Box2.north)+(1.5,0)$) {};
\node[coordinate] (cup5) at ($(Box7.south)+(-2,0)$) {};
\node[coordinate] (cup6) at ($(Box7.south)+(-1.5,0)$) {};

\node[coordinate] (line1) at ($(cup1)+(0,-5)$) {};
\node[coordinate] (line2) at ($(line1)+(0.5,0)$) {};

\wires[black]{Box1 = {north.1 = output1.south, north.5 = Box3.south.2}}{};
\wires[black, looseness=1.5]{Box3 = {south.1 = cup2.south, north.1 = Box4.south}}{};
\wires[black, looseness=1.5]{Box5 = {south.1 = Box4.north, north.1 = output3.south, north.2 = cup1.north}}{};
\wires[black]{output2 = {south = cup2.north}};
\wires[black]{cup1 = {south = line1.north}};
\wires[black]{output4 = {south = line2.north}};
\wires[black]{output5 = {south = Box6.north.1}};

\wires[black, looseness=1.5]{Box2 = {north.1 = cup3.north, north.2 = cup4.north, south.1 = Box6.north.2}}{};
\wires[black, looseness=1.5]{Box7 = {north.2 = Box6.south.1, south.1 = cup6.south, south.2 = cup5.south}}{};
\wires[black]{cup5 = {north = line1.south}};
\wires[black]{cup6 = {north = line2.south}};

\wires[black]{Box8 = {north.1 = Box1.south.5}};
\wires[black]{Box9 = {north.1 = Box8.south.3, north.4 = Box6.south.2, south.1 = Box10.north.3, south.4 = input3.north}};
\wires[black]{Box10 = {north.1 = Box8.south.1, north.3 = Box9.south.1, south.1 = input1.north, south.3 = input2.north}};
\wires[black]{cup3 = {south = input5.north}};
\wires[black]{cup4 = {south = input4.north}};

\path (Box3.north) -- (Box4.south) node[pos=0.5, left] [text=black]{\scriptsize $LP_1(x_2 \chi)$};
\path (Box4.north) -- (Box5.south) node[pos=0.5, left] [text=black]{\scriptsize $LP_1(x_2' \chi')$};
\path (Box1.north.5) -- (Box3.south.2) node[pos=0.5, right] [text=black]{\scriptsize $LP_1\chi$};
\path (cup1.south) -- (cup5.north) node[pos=0.5, left] [text=black]{\scriptsize $L\exists^{P_1}\chi'$};

\path (Box1.south.5) -- (Box8.north.1) node[pos=0.5, left] [text=black]{\scriptsize $L P_1(x_1,\chi)$};
\path (Box8.south.1) -- (Box10.north.1) node[pos=0.5, left] [text=black]{\scriptsize $L P_2(x_1)$};
\path (Box8.south.3) -- (Box9.north.1) node[pos=0.5, left] [text=black]{\scriptsize $L P_1(\chi)$};

\path (Box6.south.1) -- (Box7.north.2) node[pos=0.5, left] [text=black]{\scriptsize $LP_1(x_3'\chi')$};
\path (Box6.south.2) -- (Box9.north.4) node[pos=0.5, right] [text=black]{\scriptsize $\rho_{X\odot X'}$};
\path (Box2.south.1) -- (Box6.north.2) node[pos=0.5, right] [text=black]{\scriptsize $P_2F(x_3'\chi')$};
\end{tikzpicture}}
\end{equation*}

\begin{equation*}
   = \adjustbox{scale=0.5}{\begin{tikzpicture}[baseline=17em, marker/.style={circle, fill, inner sep=1pt, text=white}]
\node[box, box ports south=4, box ports north=4, minimum width=4cm, color=black] (Box1) at (0,0) {\rho^{-1}_{x_1}};
\node[box, box ports south=6, box ports north=6, minimum width=6cm, color=black] (Box2) at ($(Box1)+(4, 2)$) {\rho^{-1}_\chi};

\node[box, box ports south=3, box ports north=1, minimum width=3cm, color=black] (Box3) at ($(Box2)+(-1.5, 2)$) {\gamma^{L P_1}_{x_2, \chi}};
\node[box, box ports south=1, box ports north=1, minimum width=3cm, color=black] (Box4) at ($(Box3)+(0, 2)$) {LP_1(\alpha)};
\node[box, box ports south=5, box ports north=5, minimum width=5cm, color=black] (Box5) at ($(Box4)+(1, 2)$) {{\gamma^{L P_1}}^{-1}_{x_2', \chi'}};

\node[box, box ports south=2, box ports north=2, minimum width=2cm, color=black] (Box6) at ($(Box5)+(1.5, 2)$) {{\gamma^{LP_1}_{x_3', \chi'}}^{-1}};

\node[box, box ports south=2, box ports north=2, minimum width=2cm, color=black] (Box7) at ($(Box6)+(1, 2)$) {\rho_{x_3' \chi'}};

\node[box, box ports south=2, box ports north=2, minimum width=2cm, color=black] (Box8) at ($(Box7)+(1, 2)$) {{\gamma^{P_2F}}^{-1}_{x_3', \chi'}};

\node[coordinate, label=below:{{\scriptsize $\rho_{X_1}$}}]                           (input1) at ($(Box1.south.1)+(0,-1)$) {};
\node[coordinate, label={[text=black]below:{{\scriptsize $P_2 F^{\op} x_1$}}}]        (input2) at ($(Box1.south.4)+(0,-1)$) {};
\node[coordinate, label={[text=black]below:{{\scriptsize $P_2 F^{\op} \chi$}}}]       (input3) at ($(input2)+(5,0)$) {};
\node[coordinate, label={[text=black]below:{{\scriptsize $\exists^{P_2} F \chi'$}}}]  (input4) at ($(input3)+(2.5,0)$) {};
\node[coordinate, label={[text=black]below:{{\scriptsize $\exists^{P_2} F x_3'$}}}]   (input5) at ($(input4)+(1.2,0)$) {};

\node[coordinate, label={[text=black]above:{{\scriptsize $L P_1 x_1$}}}]            (output1) at ($(Box1.north.1) + (0,15.5)$) {};
\node[coordinate, label={[text=black]above:{{\scriptsize $L \exists^{P_1} x_2$}}}]  (output2) at ($(output1)+(1.5,0)$) {};
\node[coordinate, label={[text=black]above:{{\scriptsize $L P_1 x_2'$}}}]           (output3) at ($(output2)+(1.5,0)$) {};
\node[coordinate, label={[text=black]above:{{\scriptsize $L \exists^{P_1} x_3'$}}}] (output4) at ($(output3)+(1.5,0)$) {};
\node[coordinate, label={[text=black]above:{{\scriptsize $\rho_{X_3'}$}}}]          (output5) at ($(output4)+(2.5,0)$) {};

\node[coordinate] (cup1) at ($(Box3.south.1)+(-1.5,0)$) {};
\node[coordinate] (cup2) at ($(Box6.south.1)+(-1.5,0)$) {};
\node[coordinate] (cup3) at ($(Box8.north.2)+(1.5,0)$) {};
\node[coordinate] (cup4) at ($(Box8.north.2)+(2.7,0)$) {};

\wires[black]{Box1 = {south.1 = input1.north, south.4 = input2.north, north.1 = output1.south, north.4 = Box2.south.1}}{};
\wires[black]{Box2 = {south.6 = input3.north, north.2 = Box3.south.2, north.6 = Box7.south.2}}{};
\wires[black, looseness=1.5]{Box3 = {south.1 = cup1.south, north.1 = Box4.south}}{};
\wires[black]{cup1 = {north = output2.south}}{};
\wires[black]{Box5 = {south.2 = Box4.north, north.1 = output3.south, north.5 = Box6.south.2}}{};
\wires[black, looseness=1.5]{Box6 = {south.1 = cup2.south, north.2 = Box7.south.1}}{};
\wires[black]{cup2 = {north = output4.south}}{};
\wires[black]{Box7 = {north.1 = output5.south, north.2 = Box8.south.1}}{};
\wires[black, looseness=1.5]{Box8 = {north.1 = cup4.north, north.2 = cup3.north}}{};
\wires[black]{cup3 = {south = input4.north}}{};
\wires[black]{cup4 = {south = input5.north}}{};

\path (Box1.north.4) -- (Box2.south.1) node[pos=0.5, left] [text=black]{\scriptsize $\rho_X$};
\path (Box2.north.2) -- (Box3.south.2) node[pos=0.5, right] [text=black]{\scriptsize $LP_1(\chi)$};
\path (Box3.north) -- (Box4.south) node[pos=0.5, right] [text=black]{\scriptsize $LP_1(x_2 \chi)$};
\path (Box4.north) -- (Box5.south.2) node[pos=0.5, right] [text=black]{\scriptsize $LP_1(x_2' \chi')$};
\path (Box2.north.6) -- (Box7.south.2) node[pos=0.5, right] [text=black]{\scriptsize $\rho_{X\odot X'}$};
\path (Box6.north.2) -- (Box7.south.1) node[pos=0.5, left] [text=black]{\scriptsize $LP_1(x_3'\chi')$};
\path (Box7.north.2) -- (Box8.south.1) node[pos=0.5, right] [text=black]{\scriptsize $P_2F(x_3'\chi')$};
\end{tikzpicture}}
\end{equation*}

\begin{equation*}
   = \adjustbox{scale=0.5}{\begin{tikzpicture}[baseline=17em, marker/.style={circle, fill, inner sep=1pt, text=white}]
\node[box, box ports south=4, box ports north=4, minimum width=4cm, color=black] (Box1) at (0,0) {\rho^{-1}_{x_1}};
\node[box, box ports south=6, box ports north=6, minimum width=6cm, color=black] (Box2) at ($(Box1)+(4, 2)$) {\rho^{-1}_\chi};

\node[box, box ports south=3, box ports north=1, minimum width=3cm, color=black] (Box3) at ($(Box2)+(-1.5, 2)$) {\gamma^{L P_1}_{x_2, \chi}};
\node[box, box ports south=1, box ports north=1, minimum width=3cm, color=black] (Box4) at ($(Box3)+(0, 2)$) {LP_1(\alpha)};
\node[box, box ports south=5, box ports north=5, minimum width=5cm, color=black] (Box5) at ($(Box4)+(1, 2)$) {{\gamma^{L P_1}}^{-1}_{x_2', \chi'}};
\node[box, box ports south=2, box ports north=2, minimum width=2cm, color=black] (Box6) at ($(Box5)+(2.5, 1.5)$) {\rho_{\chi'}};

\node[box, box ports south=2, box ports north=2, minimum width=2cm, color=black] (Box7) at ($(Box6)+(-1, 1.5)$) {\rho_{x_3'}};
\node[box, box ports south=2, box ports north=1, minimum width=2cm, color=black] (Box8) at ($(Box7)+(1, 1.5)$) {{\gamma^{P_2 F}_{x_3', \chi'}}^{-1}};
\node[box, box ports south=1, box ports north=2, minimum width=2cm, color=black] (Box9) at ($(Box8)+(0, 2)$) {{\gamma^{P_2F}}^{-1}_{x_3', \chi'}};

\node[coordinate, label=below:{{\scriptsize $\rho_{X_1}$}}]                           (input1) at ($(Box1.south.1)+(0,-1)$) {};
\node[coordinate, label={[text=black]below:{{\scriptsize $P_2 F^{\op} x_1$}}}]        (input2) at ($(Box1.south.4)+(0,-1)$) {};
\node[coordinate, label={[text=black]below:{{\scriptsize $P_2 F^{\op} \chi$}}}]       (input3) at ($(input2)+(5,0)$) {};
\node[coordinate, label={[text=black]below:{{\scriptsize $\exists^{P_2} F \chi'$}}}]  (input4) at ($(input3)+(1.5,0)$) {};
\node[coordinate, label={[text=black]below:{{\scriptsize $\exists^{P_2} F x_3'$}}}]   (input5) at ($(input4)+(1.2,0)$) {};

\node[coordinate, label={[text=black]above:{{\scriptsize $L P_1 x_1$}}}]            (output1) at ($(Box1.north.1) + (0,15.5)$) {};
\node[coordinate, label={[text=black]above:{{\scriptsize $L \exists^{P_1} x_2$}}}]  (output2) at ($(output1)+(1.5,0)$) {};
\node[coordinate, label={[text=black]above:{{\scriptsize $L P_1 x_2'$}}}]           (output3) at ($(output2)+(1.5,0)$) {};
\node[coordinate, label={[text=black]above:{{\scriptsize $L \exists^{P_1} x_3'$}}}] (output4) at ($(output3)+(1.5,0)$) {};
\node[coordinate, label={[text=black]above:{{\scriptsize $\rho_{X_3'}$}}}]          (output5) at ($(output4)+(1.5,0)$) {};

\node[coordinate] (cup1) at ($(Box3.south.1)+(-1.5,0)$) {};
\node[coordinate] (cup2) at ($(Box7.south.1)+(-1.5,0)$) {};
\node[coordinate] (cup3) at ($(Box9.north.2)+(1.5,0)$) {};
\node[coordinate] (cup4) at ($(Box9.north.2)+(2.7,0)$) {};
\node[coordinate] (line1) at ($(Box7.south.2)+(1,0)$) {};

\wires[black]{Box1 = {south.1 = input1.north, south.4 = input2.north, north.1 = output1.south, north.4 = Box2.south.1}}{};
\wires[black]{Box2 = {south.6 = input3.north}}{};
\wires[black, looseness=1.5]{Box3 = {south.1 = cup1.south, north.1 = Box4.south, south.2 = Box2.north.2}}{};
\wires[black]{cup1 = {north = output2.south}}{};
\wires[black]{Box5 = {south.2 = Box4.north, north.1 = output3.south, north.5 = Box6.south.1}}{};
\wires[black]{Box6 = {south.2 = Box2.north.6, north.1 = Box7.south.2, north.2 = line1.south}}{};
\wires[black, looseness=1.5]{Box7 = {south.1 = cup2.south, north.1 = output5.south, north.2 = Box8.south.1}}{};
\wires[black]{cup2 = {north = output4.south}}{};
\wires[black]{line1 = {north = Box8.south.2}}{};
\wires[black]{Box8 = {north.1 = Box9.south.1}}{};
\wires[black, looseness=1.5]{Box9 = {north.1 = cup4.north, north.2 = cup3.north}}{};
\wires[black]{cup3 = {south = input4.north}}{};
\wires[black]{cup4 = {south = input5.north}}{};

\path (Box1.north.4) -- (Box2.south.1) node[pos=0.5, left] [text=black]{\scriptsize $\rho_X$};
\path (Box2.north.2) -- (Box3.south.2) node[pos=0.5, right] [text=black]{\scriptsize $LP_1(x)$};
\path (Box3.north) -- (Box4.south) node[pos=0.5, right] [text=black]{\scriptsize $LP_1(x_2 \chi)$};
\path (Box4.north) -- (Box5.south.2) node[pos=0.5, right] [text=black]{\scriptsize $LP_1(x_2' \chi')$};
\path (Box2.north.6) -- (Box6.south.2) node[pos=0.5, left] [text=black]{\scriptsize $\rho_{X\odot X'}$};
\path (Box8.north.1) -- (Box9.south.1) node[pos=0.5, right] [text=black]{\scriptsize $P_2F(x_3'\chi')$};
\end{tikzpicture}}= \frac{\rho^{\bullet}_X \ \mid \ \rho^{\bullet}_{X'}}{{\mathcal{B}^2_F}^{-1}}
\end{equation*}

$\bullet$ \underline{Cell naturality of $\rho^{\bullet}$}: given a square $\begin{tikzcd}
	{Y_1} && {Y_2} \\
	\\
	{X_1} && {X_2}
	\arrow[""{name=0, anchor=center, inner sep=0}, "Y"{inner sep=.8ex}, "\shortmid"{marking}, from=1-1, to=1-3]
	\arrow["{f_1^{\op}}"', from=1-1, to=3-1]
	\arrow["{f_2^{\op}}", from=1-3, to=3-3]
	\arrow[""{name=1, anchor=center, inner sep=0}, "X"'{inner sep=.8ex}, "\shortmid"{marking}, from=3-1, to=3-3]
	\arrow["\alpha"', color={rgb,255:red,92;green,92;blue,214}, between={0.2}{0.8}, Rightarrow, from=1, to=0]
\end{tikzcd}$ in $\Span{\mathfrak{C}_1}^{\op}$, we must verify that

\begin{equation}
\label{cell naturality}
    \adjustbox{scale=0.6}{\begin{tikzpicture}[baseline=5em, marker/.style={circle, fill, inner sep=1pt, text=white}]
    
\node[coordinate, label=below:{{\scriptsize $LP_1f_1$}}]                          (input1) at ($(-1,-1)$) {};
\node[coordinate, label={[text=black]below:{{\scriptsize $\rho_{X_1}$}}}]             (input2) at ($(0,0)+(2,-1)$) {};
\node[coordinate, label={[text=black]below:{{\scriptsize $P_2^{\bullet}FX$}}}]            (input3) at ($(4,-1)$) {};

\node[coordinate, label={[text=black]above:{{\scriptsize $LP^{\bullet}_1Y$}}}]            (output1) at ($(Box1.north.1) + (1,5)$) {};
\node[coordinate, label={[text=black]above:{{\scriptsize $\rho_{Y_2}$}}}]  (output2) at ($(Box3.north.1)+(0.5, 3)$) {};
\node[coordinate, label={[text=black]above:{{\scriptsize $P_2Ff_2$}}}]           (output3) at ($(Box3.north.3)+(-1,3)$) {};


\node[box, box ports south=2, box ports north=2, minimum width = 5cm, color=black] (Box1) at (0,2) {{L{P_1^{\bullet}} \alpha}};

\node[box, box ports south=2, box ports north=2, minimum width = 3cm, color=black] (Box2) at (3,0) {\rho^{\bullet}_X};

\node[box, box ports south=2, box ports north=2, minimum width = 3cm , color=black] (Box3) at (3,4) {\rho_{f_2}};

\wires[black]{Box1 = {north.1=output1.south, south.1 = input1.north, north.2= Box3.south.1}}{};

\wires[black]{Box2 = {north.1=Box1.south.2, north.2= Box3.south.2, south.1 = input2.north, south.2 = input3.north}}{};

\wires[black]{Box3 = {north.1=output2.south, north.2= output3.south}}{};

\node[coordinate, label=left:{\scriptsize $LP_1f_2$}] (label1) at ($(Box1.north.2)+(0, 0.6)$) {};

\node[coordinate, label=below:{\scriptsize $LP_1^{\bullet}X$}] (label2) at ($(Box1.south.2)+(0.0,-0.5)$) {};

\node[coordinate, label=right:{\scriptsize $\rho_{X_2}$}] (label2) at ($(Box3.south.2)+(0.0,-1.6)$) {};

\end{tikzpicture}} =  \adjustbox{scale=0.7}{\begin{tikzpicture}[baseline=5em, marker/.style={circle, fill, inner sep=1pt, text=white}]



\node[box, box ports south=2, box ports north=2, minimum width = 5cm, color=blue] (Box1) at (6,2) {{P_2^{\bullet}F} \alpha};

\node[box, box ports south=2, box ports north=2, minimum width = 3cm, color=black] (Box2) at (3,0) {\rho_{f_1}};

\node[box, box ports south=2, box ports north=2, minimum width = 3cm , color=red] (Box3) at (3,4) {\rho^{\bullet}_Y};

\node[coordinate, label=below:{{\scriptsize $LP_1f_1$}}]                          (input1) at ($(Box2.south.1) + (0,-0.5)$) {};

\node[coordinate, label={[text=black]below:{{\scriptsize $\rho_{X_1}$}}}]             (input2) at ($(Box2.south.2) + (0,-0.5)$) {};

\node[coordinate, label={[text=black]below:{{\scriptsize $P_2^{\bullet}FX$}}}]            (input3) at ($(Box1.south.2)+(0, -2.5)$) {};

\node[coordinate, label={[text=black]above:{{\scriptsize $P_2Ff_2$}}}]            (output3) at ($(Box1.north.2) + (0,3)$) {};

\node[coordinate, label={[text=black]above:{{\scriptsize $\rho_{Y_2}$}}}]  (output2) at ($(Box3.north.2)+(0, 1)$) {};

\node[coordinate, label={[text=black]above:{{\scriptsize $LP^{\bullet}_1Y$}}}]           (output1) at ($(Box3.north.1)+(0,1)$) {};

\wires[black]{Box1 = {north.1=Box3.south.2, south.1 = Box2.north.2, north.2= output3.south, south.2=input3.north}}{};

\wires[black]{Box2 = {north.1=Box3.south.1, south.1 = input1.north, south.2 = input2.north}}{};

\wires[black]{Box3 = {north.1=output1.south, north.2= output2.south}}{};

\node[coordinate, label=left:{\scriptsize $P_2Ff_1$}] (label1) at ($(Box1.south.1)+(0.8, -0.6)$) {};

\node[coordinate, label=right:{\scriptsize $P_2^{\bullet}FY$}] (label2) at ($(Box3.south.2)+(0.5,-0.5)$) {};

\node[coordinate, label=left:{\scriptsize $\rho_{Y_1}$}] (label3) at ($(Box3.south.1)+(0.0,-1.6)$) {};
\end{tikzpicture}}
\end{equation}

Using the triangle identity for $\exists^{P_1} x_2 \dashv P_1 x_2$, the left-hand side of (\ref{cell naturality}) reduces to 

\begin{equation*}
\adjustbox{scale=0.6}{\begin{tikzpicture}[baseline=5em, marker/.style={circle, fill, inner sep=1pt, text=white}]

\node[box, box ports south=2, box ports north=2, minimum width = 3cm, color=black] (Box1) at (4,1) {\rho_{x_1}^{-1}};

\node[box, box ports south=2, box ports north=2, minimum width = 5cm, color=black] (Box2) at ($(Box1.north.1)+(-3,2)$) {LP_1(=_{f_1, x_1}^{y_1, \alpha})};

\node[box, box ports south=2, box ports north=2, minimum width = 3cm , color=black] (Box3) at ($(Box2.north.2) + (0,2)$) {LP_1(=_{y_2, \alpha}^{f_2, x_2})};

\node[box, box ports south=2, box ports north=2, minimum width = 3cm, color=black] (Box4) at ($(Box3.north.2) + (1,2)$) {\rho_{x_2}};

\node[box, box ports south=2, box ports north=2, minimum width = 3cm, color=black] (Box5) at ($(Box4.north.1) + (-1,2)$) {\rho_{f_2}};

\node[coordinate] (cup1) at ($(Box3.south.1)+(-1.5,0)$) {};

\node[coordinate] (cap1) at ($(Box4.north.2)+(4,0)$) {};

\node[coordinate, label=below:{{\scriptsize $LP_1f_1$}}]                          (input1) at ($(Box2.south.1) + (0,-3.5)$) {};

\node[coordinate, label=below:{{\scriptsize $\rho_{X_1}$}}]                          (input2) at ($(Box1.south.1) + (0,-1)$) {};

\node[coordinate, label=below:{{\scriptsize $P_2Fx_1$}}]                          (input3) at ($(Box1.south.2) + (0,-1)$) {};

\node[coordinate, label=below:{{\scriptsize $\exists^{P_2}Fx_2$}}]                          (input4) at ($(cap1.south) + (0,-9.5)$) {};

\node[coordinate, label=above:{{\scriptsize $LP_1 y_1$}}]                          (output1) at ($(Box2.north.1) + (-1,8.5)$) {};

\node[coordinate, label=above:{{\scriptsize $L\exists^{P_1}y_2$}}]                          (output2) at ($(cup1.north) + (0,7)$) {};

\node[coordinate, label=above:{{\scriptsize $\rho_{Y_2}$}}]                          (output3) at ($(Box5.north.1) + (0,1)$) {};

\node[coordinate, label=above:{{\scriptsize $P_2Ff_2$}}]                          (output4) at ($(Box5.north.2) + (0,1)$) {};

\wires[black, looseness=3]{cup1={north=output2.south, south= Box3.south.1} }{};

\wires[black, looseness=3]{cap1={north=Box4.north.2, south= input4.north} }{};

\wires[black]{Box1= {north.1 = Box2.south.2, north.2 = Box4.south.2, south.1=input2.north, south.2=input3.north}}{};

\wires[black]{Box2={north.1 = output1.south, north.2= Box3.south.2, south.1=input1.north}}{};

\wires[black]{Box4={south.1=Box3.north.2, north.1 = Box5.south.2}}{};

\wires[black]{Box5={south.1=Box3.north.1, north.1 = output3.south, north.2=output4.south}}{};

\node[coordinate, label=right:{\scriptsize $\rho_{X}$}] (label1) at ($(Box1.north.2)+(-0.2, 3)$) {};

\node[coordinate, label=left:{\scriptsize $LP_1x_1$}] (label2) at ($(Box1.north.1)+(-0.5, 0.5)$) {};

\node[coordinate, label=right:{\scriptsize $LP_1\alpha$}] (label3) at ($(Box3.south.2)+(0, -0.7)$) {};

\node[coordinate, label=right:{\scriptsize $LP_1x_2$}] (label4) at ($(Box4.south.1)+(-0.1, -0.7)$) {};

\node[coordinate, label=left:{\scriptsize $LP_1f_2$}] (label5) at ($(Box5.south.1)+(0, -2)$) {};

\node[coordinate, label=right:{\scriptsize $\rho_{x_2}$}] (label6) at ($(Box5.south.2)+(0.2, -0.5)$) {};

\end{tikzpicture}}
\end{equation*}

Using the naturality of $\rho$ with respect to the equality cell for $f_2 x_2=y_2 \alpha$ lets us cancel out $\rho_{f_2}$ at the top and introduce $\rho_{f_2 x_2} = \rho_{y_2 \alpha}$. We can similarly use naturality of $\rho^{-1}$ with respect to the equality cell for $f_1x_1 = y_1 \alpha$ to eliminate $\rho^{-1} x_1$ and introduce $\rho_{f_1}$ and $\rho^{-1}_{f_1, x_1}$. This lets us rewrite the left-handside of Equation~\ref{cell naturality} as

\begin{equation*}
\adjustbox{scale=0.8}{\begin{tikzpicture}[baseline=5em, marker/.style={circle, fill, inner sep=1pt, text=white}]

\node[box, box ports south=2, box ports north=2, minimum width = 3cm, color=black] (Box1) at (2,1) {\rho_{f_1}};

\node[box, box ports south=2, box ports north=1, minimum width = 5cm, color=black] (Box2) at ($(Box1.north.1)+(4,2)$) {\gamma^{P_2F}_{f_1, x_1}};

\node[box, box ports south=2, box ports north=2, minimum width = 6cm, color=black] (Box5) at ($(Box2.north.1) + (-2,2)$) {\rho^{-1}_{f_1x_1} \textcolor{orange}{= \rho^{-1}_{y_1 \alpha}}};

\node[box, box ports south=1, box ports north=2, minimum width = 6cm, color=black] (Box6) at ($(Box5.north.1) + (-2,2)$) {{\gamma^{LP_1}}^{-1}_{y_1, \alpha}};

\node[box, box ports south=2, box ports north=2, minimum width = 3cm , color=black] (Box3) at ($(Box6.north.2) + (0,4)$) {\rho_{y_2}};

\node[box, box ports south=1, box ports north=2, minimum width = 3cm, color=black] (Box4) at ($(Box3.north.2) + (1,4)$) {{\gamma^{P_2F}}^{-1}_{f_2, x_2}}; 

\node[box, box ports south=2, box ports north=2, minimum width = 3cm, color=black] (Box7) at ($(Box5.north.2) + (-1,4)$) {\rho_{\alpha}}; 

\node[box, box ports south=2, box ports north=1, minimum width = 3cm, color=black] (Box8) at ($(Box3.north.2) + (2,1)$) {{\gamma^{P_2F}}_{y_2, \alpha}}; 

\node[coordinate] (cup1) at ($(Box3.south.1)+(-1.5,0)$) {};

\node[coordinate] (cap1) at ($(Box4.north.2)+(7,0)$) {};

\node[coordinate, label=below:{{\scriptsize $LP_1f_1$}}]                          (input1) at ($(Box1.south.1) + (0,-1)$) {};

\node[coordinate, label=below:{{\scriptsize $\rho_{X_1}$}}]                          (input2) at ($(Box1.south.2) + (0,-1)$) {};

\node[coordinate, label=below:{{\scriptsize $P_2Fx_1$}}]                          (input3) at ($(Box2.south.2) + (0,-3.5)$) {};

\node[coordinate, label=below:{{\scriptsize $\exists^{P_2}Fx_2$}}]                          (input4) at ($(cap1.south) + (0,-18.5)$) {};

\node[coordinate, label=above:{{\scriptsize $LP_1 y_1$}}]                          (output1) at ($(Box6.north.1) + (-1,10.5)$) {};

\node[coordinate, label=above:{{\scriptsize $L\exists^{P_1}y_2$}}]                          (output2) at ($(cup1.north) + (0,7)$) {};

\node[coordinate, label=above:{{\scriptsize $\rho_{Y_2}$}}]                          (output3) at ($(Box3.north.1) + (0,6.5)$) {};

\node[coordinate, label=above:{{\scriptsize $P_2Ff_2$}}]                          (output4) at ($(Box4.north.1) + (0,2)$) {};

\wires[black, looseness=3]{cup1={north=output2.south, south= Box3.south.1} }{};

\wires[black, looseness=2]{cap1={north=Box4.north.2, south= input4.north} }{};

\wires[black]{Box1= {south.1=input1.north, south.2=input2.north, north.2=Box2.south.1, north.1=Box5.south.1}}{};

\wires[black]{Box2= {south.2=input3.north, north.1= Box5.south.2, }}{};

\wires[black]{Box5= {north.1=Box6.south.1, north.2=Box7.south.2}}{};

\wires[black]{Box6= {north.1=output1.south, north.2=Box7.south.1}}{};

\wires[black]{Box7= {north.1=Box3.south.2, north.2=Box8.south.2}}{};

\wires[black]{Box8= {south.1=Box3.north.2, north.1= Box4.south.1}}{};

\wires[black]{Box3= {north.1=output3.south}}{};

\wires[black]{Box4= {north.1=output4.south}}{};


\node[coordinate, label=right:{\scriptsize $P2Ff_1$}] (label1) at ($(Box1.north.2) + (0.5, 0.5)$){};

\node[coordinate, label=right:{\scriptsize $P2F(f_1x_1)$}] (label2) at ($(Box2.north.1) + (0, 0.5)$){};

\node[coordinate, label=right:{\scriptsize $\rho_X$}] (label2) at ($(Box5.north.2) + (0, 2)$){};

\node[coordinate, label=left:{\scriptsize $LP_1\alpha$}] (label3) at ($(Box6.north.2) + (0, 0.5)$){};

\node[coordinate, label=right:{\scriptsize $\rho_Y$}] (label4) at ($(Box3.south.2) + (0, -1)$){};

\node[coordinate, label=right:{\scriptsize $P_2F\alpha$}] (label5) at ($(Box8.south.2) + (0, -1.5)$){};

\node[coordinate, label=right:{\scriptsize $P_2F(f_2x_2)=P_2F(y_2\alpha)$}] (label5) at ($(Box8.north.1) + (-0.5, 1)$){};

\node[coordinate, label=left:{\scriptsize $\rho_{Y_1}$}] (label6) at ($(Box1.north.1) + (0, 1.5)$){};

\node[coordinate, label=left:{\scriptsize $LP_1(y_1\alpha)$}] (label7) at ($(Box6.south.1) + (1, -1)$){};


\end{tikzpicture}}
\end{equation*}

\begin{equation*}
   = \adjustbox{scale=0.7}{\begin{tikzpicture}[baseline=15em, marker/.style={circle, fill, inner sep=1pt, text=white}]
\node[box, box ports south=6, box ports north=6, minimum width=6cm, color=black] (Box1) at (0,0) {\rho_{f_1}};
\node[box, box ports south=3, box ports north=3, minimum width=3cm, color=black] (Box2) at ($(Box1)+(3.5, 2)$) {\gamma_{f_1,\chi_1}^{P_2F}};
\node[box, box ports south=3, box ports north=3, minimum width=3cm, color=orange] (Box3) at ($(Box2)+(0, 2)$) {\gamma_{y_1,\alpha}^{P_2F^{-1}}};
\node[box, box ports south=6, box ports north=6, minimum width=6cm, color=orange] (Box4) at ($(Box1)+(0, 6)$) {\rho_{y_1^{-1}}};
\node[box, box ports south=3, box ports north=3, minimum width=3cm, color=orange] (Box5) at ($(Box3)+(0, 4)$) {\rho_{\alpha^{-1}}};
\node[box, box ports south=3, box ports north=3, minimum width=3cm, color=black] (Box6) at ($(Box5)+(0, 2)$) {\rho_\alpha};
\node[box, box ports south=3, box ports north=3, minimum width=3cm, color=black] (Box7) at ($(Box6)+(-2, 2)$) {\rho_{y_2}};

\node[coordinate, label=below:{{\scriptsize $LP_1f_1$}}]                           (input1) at ($(Box1.south.1)+(0,-1)$) {};
\node[coordinate, label={[text=black]below:{{\scriptsize $\rho_{X_1}$}}}]        (input2) at ($(Box1.south.6)+(0,-1)$) {};
\node[coordinate, label={[text=black]below:{{\scriptsize $P_2Fx_1$}}}]       (input3) at ($(input2)+(2,0)$) {};
\node[coordinate, label={[text=black]below:{{\scriptsize $\exists^{P_2}Fx_2$}}}]       (input4) at ($(input3)+(1.5,0)$) {};

\node[coordinate, label={[text=black]above:{{\scriptsize $L P_1 y_1$}}}]            (output1) at ($(Box1.north.1) + (0,13)$) {};
\node[coordinate, label={[text=black]above:{{\scriptsize $L \exists^{P_1} y_2$}}}]  (output2) at ($(output1)+(1.5,0)$) {};
\node[coordinate, label={[text=black]above:{{\scriptsize $\rho_{Y_2}$}}}]           (output3) at ($(output2)+(1.5,0)$) {};
\node[coordinate, label={[text=black]above:{{\scriptsize $P_2Ff_2$}}}] (output4) at ($(output3)+(2,0)$) {};

\node[coordinate] (line1) at ($(Box1.north.1)+(0,3)$) {};
\node[coordinate] (line2) at ($(Box4.north.1)+(0,1.5)$) {};
\node[coordinate] (line3) at ($(Box6.north.3)+(0,1.25)$) {};
\node[coordinate] (cup1) at ($(Box7.south.1)+(-1.5,0)$) {};
\node[coordinate] (cup2) at ($(line3)+(1.5,0)$) {};
\wires[black]{Box1 = {south.1 = input1.north, south.6 = input2.north, north.1 = line1.south, north.6 = Box2.south.1}}{};
\wires[black]{Box2 = {south.3 = input3.north, north.2 = Box3.south.2}}{};
\wires[orange]{Box3 = {north.1 = Box4.south.6, north.3 = Box5.south.3}}{};
\wires[orange]{Box4 = {south.1 = line1.north, north.1 = line2.south, north.6 = Box5.south.1}}{};
\wires[black]{Box6 = {south.1 = Box5.north.1, south.3 = Box5.north.3, north.1 = Box7.south.3, north.3 = line3.south}}{};
\wires[black, looseness=1.75]{Box7 = {south.1 = cup1.south, north.1 = output3.south, north.3 = output4.south}}{};
\wires[black]{line2 = {north = output1.south}}{};
\wires[black]{cup1 = {north = output2.south}}{};
\wires[black, looseness=1.75]{cup2 = {north = line3.north, south = input4.north}}{};

\path (line1.north) -- (Box4.south.1) node[pos=0.5, left] [text=orange]{\scriptsize $\rho_{Y_1}$};
\path (Box1.north.6) -- (Box2.south.1) node[pos=0.5, left] [text=black]{\scriptsize $P_2Ff_1$};
\path (Box2.north.2) -- (Box3.south.2) node[pos=0.5, left] [text=black]{\scriptsize $P_2F(f_1x_1) = \textcolor{orange}{P_2F(y_1\alpha)}$};

\path (Box3.north.1) -- (Box4.south.6) node[pos=0.5, left] [text=orange]{\scriptsize $P_2Fy_1$};
\path (Box3.north.3) -- (Box5.south.3) node[pos=0.5, right] [text=orange]{\scriptsize $P_2F\alpha$};
\path (Box4.north.6) -- (Box5.south.1) node[pos=0.5, left] [text=orange]{\scriptsize $\rho_Y$};
\path (Box5.north.1) -- (Box6.south.1) node[pos=0.5, left] [text=black]{\scriptsize $LP_1\alpha$};
\path (Box5.north.3) -- (Box6.south.3) node[pos=0.5, right] [text=black]{\scriptsize $\rho_X$};
\path (Box6.north.1) -- (Box7.south.3) node[pos=0.5, left] [text=black]{\scriptsize $\rho_Y$};
\path (Box6.north.3) -- (line3.south) node[pos=0.5, right] [text=black]{\scriptsize $P_2F\alpha$};
\end{tikzpicture}}
\end{equation*}

\begin{equation*}
    =\adjustbox{scale=0.7}{\begin{tikzpicture}[baseline=12em, marker/.style={circle, fill, inner sep=1pt, text=white}]
\node[box, box ports south=6, box ports north=6, minimum width=6cm, color=black] (Box1) at (0,0) {\rho_{f_1}};
\node[box, box ports south=3, box ports north=3, minimum width=3cm, color=blue] (Box2) at ($(Box1)+(3.5, 2)$) {P_2F{\cong}_{f_1,\chi_1}^{y_1, \alpha}};
\node[box, box ports south=6, box ports north=6, minimum width=6cm, color=red] (Box3) at ($(Box1)+(0, 4)$) {\rho_{y_1^{-1}}};
\node[box, box ports south=3, box ports north=3, minimum width=3cm, color=red] (Box4) at ($(Box3)+(1.5, 2)$) {\rho_{y_2}};
\node[box, box ports south=3, box ports north=3, minimum width=3cm, color=blue] (Box5) at ($(Box4)+(2, 2)$) {P_2F{\cong}_{y_2, \alpha}^{f_2,\chi_2}};

\node[coordinate, label=below:{{\scriptsize $LP_1f_1$}}]                           (input1) at ($(Box1.south.1)+(0,-1)$) {};
\node[coordinate, label={[text=black]below:{{\scriptsize $\rho_{X_1}$}}}]        (input2) at ($(Box1.south.6)+(0,-1)$) {};
\node[coordinate, label={[text=blue]below:{{\scriptsize $P_2Fx_1$}}}]       (input3) at ($(input2)+(2,0)$) {};
\node[coordinate, label={[text=blue]below:{{\scriptsize $\exists^{P_2}Fx_2$}}}]       (input4) at ($(input3)+(1.5,0)$) {};

\node[coordinate, label={[text=red]above:{{\scriptsize $L P_1 y_1$}}}]            (output1) at ($(Box1.north.1) + (0,9)$) {};
\node[coordinate, label={[text=red]above:{{\scriptsize $L \exists^{P_1} y_2$}}}]  (output2) at ($(output1)+(1.5,0)$) {};
\node[coordinate, label={[text=red]above:{{\scriptsize $\rho_{Y_2}$}}}]           (output3) at ($(output2)+(1.5,0)$) {};
\node[coordinate, label={[text=blue]above:{{\scriptsize $P_2Ff_2$}}}] (output4) at ($(output3)+(2,0)$) {};

\node[coordinate] (cup1) at ($(Box4.south.1)+(-1.5,0)$) {};
\node[coordinate] (cup2) at ($(Box5.north.3)+(1.5,0)$) {};

\wires[black]{Box1 = {south.1 = input1.north, south.6 = input2.north, north.1 = Box3.south.1, north.6 = Box2.south.1}}{};
\wires[blue]{Box2 = {south.3 = input3.north, north.1 = Box3.south.6, north.3 = Box5.south.3}}{};
\wires[red]{Box3 = {north.1 = output1.south, north.6 = Box4.south.3}}{};
\wires[red, looseness=1.75]{Box4 = {south.1 = cup1.south, north.1 = output3.south}}{};
\wires[red]{cup1 = {north = output2.south}}{};
\wires[blue, looseness=1.75]{Box5 = {south.1 = Box4.north.3, north.1 = output4.south, north.3 = cup2.north}}{};
\wires[blue]{cup2 = {south = input4.north}}{};

\path (Box1.north.1) -- (Box3.south.1) node[pos=0.5, left] [text=black]{\scriptsize $\rho_{Y_1}$};
\path (Box1.north.6) -- (Box2.south.1) node[pos=0.5, left] [text=black]{\scriptsize $P_2Ff_1$};
\path (Box2.north.1) -- (Box3.south.6) node[pos=0.5, left] [text=blue]{\scriptsize $P_2Fy_1$};
\path (Box2.north.3) -- (Box5.south.3) node[pos=0.5, right] [text=blue]{\scriptsize $P_2F\alpha$};
\path (Box3.north.6) -- (Box4.south.3) node[pos=0.5, right] [text=red]{\scriptsize $\rho_X$};
\path (Box4.north.3) -- (Box5.south.1) node[pos=0.5, right] [text=blue]{\scriptsize $P_2Fy_2$};
\end{tikzpicture}}.
\end{equation*}

This is the right-hand side of (\ref{cell naturality}), so we're done.
\end{proof}

The 2-functor $(-)^{\bullet}$ is also well-defined on 2-morphisms. If $(\alpha \colon F_2 \to F_1, \beta \colon L_1 \to L_2)$ is a 2-morphism $(F_1, \rho_1, L_1) \to (F_2, \rho_2, L_2)$ in $\ExistStr_{\mathrm{w}}$, then $(-)^{\bullet}$ maps it to a pair of tight transformations since $\Span{-} \colon \mathcal{A}{dq} \to \mathrm{Dbl}$ and $\Qt{-} \colon \TwoCat \to \mathrm{Dbl}$ are well-defined 2-functors. Moreover, $P_2^{\bullet}\ \Span{\alpha^{\op}} \circ \rho^{\bullet_1} = \rho_2^{\bullet} \circ \Qt{\beta}_{P_1^{\bullet}}$. It is immediate on objects and tight morphisms, and can be readily checked on spans $X_1 \xleftarrow[]{x_1} X \xrightarrow{x_2} X_2$ by expanding the definitions and using the analogous equation for $(\alpha, \beta)$  --- just note that the morphism of spans $\Span{\alpha}^{\op}_X$ is comprised of naturality squares $\alpha_{x_1}$ and $\alpha_{x_2}$ and that the naturality square $\beta_{\exists^{P_2} x_2}$ is the mate of $\beta_{P_2x_2}$, then use the functoriality of mates.\\

This settles that $(-)^{\bullet}$ as a whole is well-defined. We are now ready to prove Lemma~\ref{lemma:BulletConstruction}:

\begin{proof}[Proof of Lemma~\ref{lemma:BulletConstruction}]

We have established that $(-)^{\bullet}$ is well-defined, and it is clearly $2$-functorial on 2-cells, as $\Span{-}^{\op}$ and $\Qt{-}$ are 2-functors. For 1-cells, suppose given a composite \begin{equation*}
    \begin{tikzcd}
	{\mathsf{C}^{\op}_1} && {\mathcal{K}_1} \\
	\\
	{\mathsf{C}^{\op}_2} && {\mathcal{K}_2} \\
	\\
	{\mathsf{C}^{\op}_3} && {\mathcal{K}_3}
	\arrow[""{name=0, anchor=center, inner sep=0}, "{P_1}", from=1-1, to=1-3]
	\arrow["{F_1^{\op}}"', from=1-1, to=3-1]
	\arrow["{L_1}", from=1-3, to=3-3]
	\arrow[""{name=1, anchor=center, inner sep=0}, "{P_2}", from=3-1, to=3-3]
	\arrow["{F_2^{\op}}"', from=3-1, to=5-1]
	\arrow["{L_2}", from=3-3, to=5-3]
	\arrow[""{name=2, anchor=center, inner sep=0}, "{P_3}"', from=5-1, to=5-3]
	\arrow["\rho", color={rgb,255:red,92;green,92;blue,214}, between={0.2}{0.8}, Rightarrow, from=0, to=1]
	\arrow["\tau", color={rgb,255:red,92;green,92;blue,214}, between={0.2}{0.8}, Rightarrow, from=1, to=2]
\end{tikzcd}
\end{equation*}
in $\ExistStr_{\mathrm{w}}$. We have that $(\frac{\rho}{\tau})^{\bullet} = \frac{\rho^{\bullet}}{ \tau^{\bullet}}$ on objects and tight morphisms, as $\phi^{\bullet}_0 = \phi$ for any $\phi$. It is also true for spans: given $X \colon X_1 \nrightarrow X_2$, we have
\begin{align*}
    (\frac{\rho}{\tau})^{\bullet}_X = (\tau \rho)^{-1}_{x_1} \mid \mathrm{mate}((\tau \rho)_{x_2}) = \frac{L_2 \rho^{-1}x_1}{\tau^{-1}_{F_1x_1}} \mid \mathrm{mate}(L_2 \rho_{x_2} \circ \tau_{F_1 x_2})\\
    = \frac{L_2 \rho^{-1}x_1}{\tau^{-1}_{F_1x_1}} \mid \frac{\mathrm{mate}(L_2 \rho_{x_2})}{\mathrm{mate}(\tau_{F_1 x_2})} = \frac{L_2 \rho^{-1}x_1 \mid \mathrm{mate}(L_2 \rho_{x_2})}{\tau^{-1}_{F_1x_1} \mid \mathrm{mate}(\tau_{F_1 x_2})} = \frac{\rho^{\bullet}_X}{\tau^{\bullet}_X}
\end{align*}
by the functoriality of mates. Thus, $(-)^{\bullet}$ is 2-functorial. It is also  cartesian, since mates and compositions are preserved by products up to coherent isomorphisms.
\end{proof}

\end{document}